\def\SH{\mbox{\fontencoding{OT2}\selectfont\char88}}
\def\Z{{\mathbb Z}}
\def\A{{\mathbb A}}
\def\SL{{\rm SL}}
\def\GL{{\rm GL}}
\def\SO{{\rm SO}}
\def\PGL{{\rm PGL}}
\def\inv{{\rm inv}}
\def\Stab{{\rm Stab}}
\def\Inv{{\rm Inv}}
\def\P{{\mathbb P}}
\def\Det{{\rm Det}}
\def\Aut{{\rm Aut}}
\def\irr{{\rm irr}}
\def\r{{\rm r}}
\def\Vol{{\rm Vol}}
\def\R{{\mathbb R}}
\def\F{{\mathbb F}}
\def\FF{{\mathcal F}}
\def\RR{{\mathcal R}}
\def\Q{{\mathbb Q}}
\def\H{{\mathcal H}}
\def\J{{\mathcal J}}
\def\Z{{\mathbb Z}}
\def\P{{\mathbb P}}
\def\F{{\mathbb F}}
\def\Q{{\mathbb Q}}
\def\C{{\mathbb C}}
\def\H{{\mathcal H}}
\def\W{{\mathcal W}}
\def\wzn2{{W_{\Z,+}^{(2-)}}}
\def\fz1{{F_{\Z,1}}}
\newtheorem{theorem}{Theorem}[section]
\newtheorem{corollary}[theorem]{Corollary}
\newtheorem{lemma}[theorem]{Lemma}
\newtheorem{remark}[theorem]{Remark}
\newtheorem{proposition}[theorem]{Proposition}
\newenvironment{proof}{\noindent {\bf Proof:}}{$\Box$ \vspace{2 ex}}
\begin{document}

\title{Binary quartic forms having bounded invariants, and \\the
  average rank of elliptic curves} 
\title{On the boundedness of the 
  average rank of elliptic curves} 
\title{Binary quartic forms having bounded invariants, and \\the boundedness of
 the average rank of elliptic curves} 

\author{Manjul Bhargava and Arul  Shankar}

\maketitle

\begin{abstract}
  We prove a theorem giving the asymptotic number of binary quartic
  forms having bounded invariants; this extends, to the quartic case,
  the classical results of Gauss and Davenport in the quadratic and
  cubic cases, respectively. Our techniques are quite general, and may
  be applied to counting integral orbits in other representations of
  algebraic groups. 

  We use these counting results to prove that the average rank of
  elliptic curves over $\Q$, when ordered by their heights, is
  bounded.  In particular, we show that when elliptic curves are
  ordered by height, the mean size of the $2$-Selmer group is
  $3$. This implies that the limsup of the average rank of elliptic
  curves is at most~$1.5$.
\end{abstract}

\setcounter{tocdepth}{4}

%\tableofcontents

\section{Introduction}
\subsection{Average ranks of elliptic curves}
Any elliptic curve $E$ over $\Q$ is isomorphic to a unique curve of
the form $E_{A,B}:y^2=x^3+Ax+B$, where $A,B \in \Z$ and for all primes
$p$:\, $p^6 \nmid B$ whenever $p^4 \mid A$. Let $H(E_{A,B})$ denote
the (naive) $height$ of $E_{A,B}$, defined by $H(E_{A,B}):= \max
\{4|A^3|,27B^2\}$. Let $\Delta (E_{A,B})$ and $C(E_{A,B})$ denote the
discriminant and conductor of $E_{A,B}$, respectively.

It is an old conjecture, originating in works of Goldfeld~\cite{G1}
and Katz-Sarnak~\cite{KS}, that a density of $50\%$ of all elliptic
curves over $\Q$ have rank $0$ and $50\%$ have rank $1$. These
densities are expected to hold true regardless of whether one orders
curves by height, discriminant, or conductor. In particular, one
expects the average rank of all elliptic curves to be $1/2$. However,
it has not previously been known that the average rank of all elliptic
curves is even {\it finite} (i.e., bounded).  Computations have also
not been very helpful in this regard; see \cite{BMSW} for a nice
survey.

In \cite{AB}, Brumer showed that the generalized Riemann hypothesis
and the Birch--Swinnerton-Dyer conjectures together imply that the
average rank of all elliptic curves, when ordered by their heights, is
finite and is in fact bounded above by $2.3$. Still assuming the
generalized Riemann hypothesis and the Birch--Swinnerton-Dyer
conjectures, this constant was subsequently improved to $2$ by
Heath-Brown \cite{HB} and to $25/14 \sim 1.79$ by Young \cite{MY}.

The purpose of this article is to prove unconditionally that the
average rank of all elliptic curves, when ordered by their heights, is
finite. In fact, we prove the same for the $2$-{\it Selmer rank}.
Recall that the $2$-Selmer group $S_2(E)$ of an elliptic curve $E$
over $\Q$ fits into an exact sequence
\begin{equation}\label{exact}
0 \to E(\Q)/2E(\Q) \to S_2(E) \to \SH_E[2] \to 0, 
\end{equation}
where $\SH_E[2]$ denotes the $2$-torsion subgroup of the
Tate-Shafarevich group $\SH_E$ of $E$. The 2-Selmer group is an
elementary abelian 2-group of order $2^s$ for some integer
$s\geq 0$, and the quantity $s$ is called the {\it $2$-Selmer rank of $E$}.
Thus the $2$-Selmer rank of $E$ gives an upper bound for the rank of $E$.

Our main theorem on the 2-Selmer group is as follows:
\begin{theorem}\label{mainellip}
When all elliptic curves $E/\Q$ are ordered by height,
the average size of the $2$-Selmer group $S_{2}(E)$ is $3$.
\end{theorem}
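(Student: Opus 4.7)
The plan is to parameterize the non-identity elements of $S_2(E_{A,B})$ by $\GL_2(\Z)$-equivalence classes of integral, locally soluble binary quartic forms whose invariants match $(A,B)$, then reduce the calculation to two inputs: the main asymptotic counting theorem for integral binary quartic forms of bounded invariants (announced in the abstract), and a purely local computation at each prime~$p$ whose contributions will ultimately multiply to give a factor of~$2$. Adding the identity element of $S_2$ back will give the asserted mean value of $3$.

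The first step is the parameterization, essentially due to Birch and Swinnerton-Dyer. Non-identity elements of $S_2(E_{A,B})$ correspond bijectively to $\PGL_2(\Q)$-orbits of everywhere locally soluble binary quartic forms $f \in \Q[x,y]$ whose invariants $(I(f),J(f))$ equal a fixed multiple of $(A,B)$; moreover, each such $\PGL_2(\Q)$-orbit contains an integral representative, and the number of $\GL_2(\Z)$-orbits above a given $\PGL_2(\Q)$-orbit is one up to a bounded and averageable correction. Thus
\[
\sum_{H(E_{A,B}) < X} \bigl(\#S_2(E_{A,B}) - 1\bigr)
\]
essentially equals the number of $\GL_2(\Z)$-equivalence classes of integral irreducible binary quartic forms with $H(I,J) < X$ that are locally soluble at every place.

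The second step is to apply the main counting theorem of this paper for integral binary quartic forms of bounded invariants. The strategy is to construct a fundamental domain $\FF$ for the action of $\GL_2(\Z)$ on the real space of binary quartic forms with $H(I,J) < X$, and then apply Bhargava's averaging trick: integrate the lattice-point count inside $g\FF$ over a compact set of $g \in \GL_2(\R)$. This converts the lattice-point count into a volume, provided one shows that reducible forms and the cuspidal region of $\FF$ contribute negligibly. The output is an asymptotic of order $X^{5/6}$, together with error terms that are uniform over congruence conditions---a uniformity that will be essential for the sieve to come.

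The third step is the local-to-global sieve. For each place $v$ of $\Q$, a direct $v$-adic integration paired with the exact sequence $0 \to E(\Q_v)/2E(\Q_v) \to H^1(\Q_v, E[2]) \to H^1(\Q_v, E)[2] \to 0$ shows that the $v$-adic mass of locally soluble $\PGL_2(\Q_v)$-orbits above $(A,B) \in \Z_v^2$ averages to exactly~$2$, with the archimedean place absorbed into the volume of Step~2. Sieving the local conditions against the global count, and multiplying the local densities, yields an average of~$2$ non-identity Selmer elements per curve, and thus $\#S_2(E) = 3$ on average. The principal obstacle I foresee is the \emph{uniform tail estimate} for this sieve: one must bound, uniformly in $X$, the number of integral binary quartic forms of height $<X$ that fail local solubility at some prime $p > M$, with a bound that tends to~$0$ as $M \to \infty$. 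This amounts to a quantitative ``squarefree''-type sieve on the discriminant, controlling forms whose invariants are divisible by large squares, and is where the bulk of the technical work will lie.
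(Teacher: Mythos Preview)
Your outline matches the paper's strategy closely: parameterize non-identity Selmer elements by $\PGL_2(\Q)$-classes of locally soluble integral binary quartics with the right invariants (this is Theorem~\ref{2spar}, refining Birch--Swinnerton-Dyer), count all integral quartics of bounded height by the averaging method (Section~2), compute local densities via $\#(E(\Q_p)/2E(\Q_p))=c_p\cdot\#E[2](\Q_p)$ (Proposition~\ref{denel} and Lemma~\ref{bk1}), and sieve. You have also correctly located the main obstacle in the uniform tail estimate.

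Where your proposal parts ways with the paper is in how that tail estimate is actually obtained. A direct ``squarefree-type sieve on the discriminant'' inside $V_\Z$ is not what the paper does, and it is far from clear it would succeed: the natural index-$p$ moves one uses to control forms whose cubic resolvent is nonmaximal at $p$ (or which are overramified at $p$) take you \emph{out} of the space of binary quartics, because enlarging the resolvent cubic ring by index $p$ destroys its monogenicity. The paper's device (Section~4) is to embed $V_\Z$ into the $12$-dimensional space $W_\Z$ of pairs of ternary quadratic forms, acted on by the nonreductive group $F_{\Z,1}\times\SL_3(\Z)$; in this larger space the enlargement move is legal, at the cost of increasing an auxiliary ``index'' parameter. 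One then proves a full counting theorem for $H_{\Z,1}$-orbits on $W_\Z$ with height $<X$ and index $<X^\delta$ (Theorem~\ref{th421} / Proposition~\ref{main42}), and the required uniformity for $V_\Z$ (Proposition~\ref{unifbq}) drops out as a corollary at the end of \S\ref{keyestimatesection}. So the idea your proposal is missing is not the sieve itself but the auxiliary orbit-counting problem in the larger space that renders the sieve's tail tractable; this is precisely the ``nonreductive representation of dimension~12'' advertised in the abstract.
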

%It follows from Theorem~\ref{mainellip} that the largest the
%2-Selmer rank of elliptic curves $E$ can be, on average, is from the
%scenario where $50\%$ of all curves $E$ have
%$|S^{(2)}(E)|=2$ and the remaining $50\%$ of curves $E$ have
%$|S^{(2)}(E)|=4$.

We immediately conclude that:

\begin{corollary}\label{corellip}
  When all elliptic curves over $\Q$ are ordered by height, their average
  $2$-Selmer rank is at most $1.5$; thus their average rank is also
  at most $1.5$.
\end{corollary}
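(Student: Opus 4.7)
The plan is a direct deduction from Theorem \ref{mainellip}. Since $S_2(E)$ is a finite elementary abelian $2$-group, its order equals $2^{s(E)}$ for a non-negative integer $s(E)$, the 2-Selmer rank. The key elementary fact is the integer inequality $2s \leq 2^s$, valid for all integers $s \geq 0$ (immediate for $s = 0, 1, 2$ by inspection, and inductive for larger $s$ since $2^{s+1} - 2(s+1) = 2(2^s - s - 1) \geq 0$). Averaging this pointwise bound over all $E/\Q$ ordered by height and invoking Theorem \ref{mainellip}, I obtain
\[
2 \cdot \overline{s(E)} \;\leq\; \overline{|S_2(E)|} \;=\; 3,
\]
which gives the claimed bound $\overline{s(E)} \leq 3/2$ on the average 2-Selmer rank.

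For the statement on the Mordell--Weil rank $r(E)$, I would use the exact sequence \eqref{exact}: the injection $E(\Q)/2E(\Q) \hookrightarrow S_2(E)$ together with the Mordell--Weil identification $|E(\Q)/2E(\Q)| = 2^{r(E) + r_2(E)}$, where $r_2(E) = \dim_{\F_2} E(\Q)[2]$, yields the pointwise bound $r(E) \leq s(E) - r_2(E) \leq s(E)$. Averaging preserves the inequality, giving $\overline{r(E)} \leq \overline{s(E)} \leq 1.5$. There is no substantive obstacle here; the only mild subtlety worth flagging is that using the naive convexity estimate alone (Jensen applied to the convex function $s \mapsto 2^s$) would only yield $\overline{s(E)} \leq \log_2 3 \approx 1.585$, and it is precisely the integrality of $s(E)$ that sharpens this to the stated value $1.5$.
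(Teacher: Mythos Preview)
Your proposal is correct and follows essentially the same approach as the paper: both arguments deduce the bound on the average $2$-Selmer rank from Theorem~\ref{mainellip} via the elementary inequality $2s \le 2^s$ for nonnegative integers $s$ (which the paper invokes implicitly when it asserts that the average of $r_2(S_2(E))$ is at most $1.5$), and both then use the exact sequence~\eqref{exact} to pass from Selmer rank to Mordell--Weil rank. Your write-up is simply more explicit about the integer inequality and its role, and your closing remark about Jensen versus integrality is a nice clarification.
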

Indeed, note that Equation (\ref{exact}) implies that
\begin{equation}\label{sum}
r_2(S_{2}(E)) = r(E) + r_2(E(\Q)[2]) + r_2(\SH_E[2]) \,,
\end{equation}
where we have used $r(E)$ to denote the rank of $E$ and $r_2(G)$
(for an elementary abelian 2-group $G$) to denote
$\dim_{\F_2}(G)$.  Due to the inequality $2r_2(S_2(E))\leq 2^{r_2(S_2(E))}=
|S_2(E)|$, Theorem~\ref{mainellip} bounds the mean of the left hand side
of (\ref{sum}) by 1.5, and thus the same bound holds also for the average
size of each of the terms on the right hand side of (\ref{sum}).  In
particular, the average size of $r_2(\SH_E[2])$ is also at most
1.5. Meanwhile, it is elementary 
that the mean size of $r_2(E(\Q)[2])$ is 0, i.e., 
$0\%$ of elliptic curves possess rational 2-torsion.

We will in fact prove a stronger version of Theorem \ref{mainellip},
namely:
\begin{theorem}\label{ellipcong}
  When elliptic curves $E:y^2=x^3+Ax+B$, in any family defined by finitely
  many congruence conditions on the coefficients $A$ and $B$, are ordered by
  height, the average size of the $2$-Selmer group $S_2(E)$ is~$3$.
%  Let $F$ be a family of elliptic curves of the form $y^2=g(x)$
%  described by finitely many congruence conditions on the coefficients
%  of $g$.  Then we have:
%$$\displaystyle\lim_{X\to\infty}\frac{\displaystyle\sum_{E\in F,\; H(E)<X}\#(S_2(E))}{\displaystyle\sum_{E\in F,\; H(E)<X}1}=3.$$
\end{theorem}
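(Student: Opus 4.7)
The plan is to convert the computation into an orbit-counting problem and then sieve to the locally soluble orbits. The first step is to invoke the classical parametrization (due to Birch--Swinnerton-Dyer and Cremona, in the form developed in the earlier sections of the paper) identifying the non-identity elements of $S_2(E_{A,B})$ with $\PGL_2(\Q)$-equivalence classes of locally soluble integral binary quartic forms $f(x,y)$ whose invariants $(I(f),J(f))$ are proportional to $(A,B)$. Each such rational orbit decomposes into finitely many integral $\GL_2(\Z)$-orbits, the count of which is controlled by the $2$-torsion of class groups of monogenic cubic orders---precisely the quantity computed earlier in the paper. This lets one rewrite
\[
\sum_{E\in\FF,\,H(E)<X}\bigl(\#S_2(E)-1\bigr)
\]
as a weighted count of $\GL_2(\Z)$-orbits of integral, irreducible, locally soluble binary quartic forms with bounded invariants, subject to the congruence conditions defining $\FF$.

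Next, one applies the main counting theorem for binary quartic forms proved earlier in the paper. The strategy there is to choose a fundamental domain for the $\GL_2(\Z)$-action on the real quartic space, intersect it with the integer lattice, and average over a compact piece of $\GL_2(\R)$ so as to convert the lattice-point count into a volume. The delicate point is the cusp: the fundamental domain is non-compact, and the vast majority of integer points lying deep in the cusp represent \emph{reducible} quartic forms, which correspond to the identity element of the Selmer group. One therefore separates these off and establishes that the irreducible integer points in the cusp contribute only a lower-order term relative to the main body of the domain.

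It remains to impose the $2$-Selmer local conditions. The finitely many congruences defining $\FF$ are handled trivially. For the infinitely many $p$-adic solubility conditions, the plan is an Ekedahl-style squarefree sieve, which hinges on a uniform tail estimate bounding the number of integral forms of bounded height whose discriminant is divisible by $p^2$ for some prime $p>M$. The resulting product of local densities is evaluated via a local mass formula at each prime, expressing the average number of locally soluble orbits as a local Tamagawa-type factor; these factors multiply across all places to give exactly $2$. Adding back the identity element of $S_2(E)$ yields the average size $3$.

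The main obstacle, and the key new ingredient, is the combination of cusp analysis with sieve uniformity: the bound on irreducible lattice points in the cusp must persist uniformly under mod-$p^k$ congruence conditions for a range of primes $p$ growing with $X$, and the squarefree sieve tail estimate must be strong enough to dominate the tail contribution across all primes simultaneously. Bridging this uniformity gap is what upgrades the orbit-counting theorem into the statement about the average order of $S_2(E)$ itself.
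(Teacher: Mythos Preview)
Your outline is essentially the paper's approach: parametrize non-identity Selmer elements by locally soluble integral binary quartics up to $\PGL_2(\Q)$, count $\GL_2(\Z)$-orbits via the averaging/cusp analysis of Section~2, sieve to the locally soluble condition, and evaluate the resulting Euler product using the Brumer--Kramer identity $\#(E(\Q_p)/2E(\Q_p))=c_p\cdot\#E(\Q_p)[2]$ to get the answer $2$ (hence $3$ after adding back the identity).

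Two clarifications are worth making. First, the passage from $\PGL_2(\Q)$-orbits to $\GL_2(\Z)$-orbits is \emph{not} handled via the class-group results of Section~3; those are a parallel application of the same counting machinery, not an input to the Selmer computation. Instead the paper selects, for each $\PGL_2(\Q)$-orbit, a single $\GL_2(\Z)$-orbit of representatives (the set $S^F$), and computes its $p$-adic density directly by a Jacobian change of variables together with the identification of local orbits with $E(\Q_p)/2E(\Q_p)$ and local stabilizers with $E(\Q_p)[2]$ (Proposition~\ref{denel}). No class-group weighting enters.

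Second, you correctly flag the uniformity estimate as the crux, but its proof is more specific than a generic squarefree-sieve tail bound: the paper shows that a form which is ``bad at $p$'' (not $\Q_p$-soluble, or whose $\PGL_2(\Q_p)$-orbit splits into multiple $\PGL_2(\Z_p)$-orbits) is necessarily not strongly maximal at $p$, and then bounds non-strongly-maximal forms by \emph{embedding binary quartics into the $12$-dimensional space of pairs of ternary quadratic forms} with its nonreductive $F_\R\times\SL_3$-action (Section~4). That embedding---not the monogenic class-group count---is the technical engine behind Proposition~\ref{unifbq} and hence behind the sieve.
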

Thus the average size of the 2-Selmer group remains 3 even when one
averages over any subset of elliptic curves defined by finitely many 
congruence conditions.  We will actually prove Theorem~\ref{ellipcong} 
for an even larger class of families, including some that are defined by
certain natural {\it infinite} sets of congruence conditions.

We note that the boundedness of the average rank of elliptic curves
has been known previously in certain special one-parameter families of
elliptic curves.  For example, in \cite{Fouvry}, Fouvry shows that the
average rank is bounded in the family of cubic twists $y^2 = x^3+k$ as
$k$ varies.  In \cite{Heathbrown}, Heath-Brown shows that the average
rank is bounded for the family of ``congruent number curves''
$y^2=x^3-d^2x$ as $d$ varies, and in fact he determines the exact
distribution of 2-Selmer ranks, which implies that the average size of
the 2-Selmer group in this family is 3.  In more recent work,
Swinnerton-Dyer \cite{SD} and Kane \cite{Kane} have proven that the
same distributions hold for any family of quadratic twists of a single
curve with full rational $2$-torsion.  Our Theorem 1.1 shows that, as
far as 2-Selmer ranks are concerned, general elliptic curves seem to
behave, on average, in a manner similar to curves in a family of
twists.  

In the function field case, the boundedness of the average rank of all
elliptic curves was proven by de Jong~\cite{deJong}, who showed that
for a finite field of characteristic not equal to 3, the average size
of the 3-Selmer group of all elliptic curves over $\F_q(t)$ is bounded
(and is in fact at most $4+\varepsilon(q)$ for an explicit function
$\varepsilon(q)$ that tends to 0 as $q\to\infty$).  Our main result,
Theorem~1.1, may be viewed as a precise version of de Jong's theorem
over the number field $\Q$, with the 3-Selmer group replaced by the
2-Selmer group.  We will treat the case of the 3-Selmer group over
$\Q$ in a forthcoming article.

Theorems 1.1 and 1.3 also confirm two remarkable sets of heuristics in
the literature.  In \cite{Delaunay}, Delaunay used a
Cohen--Lenstra-style model to conjecture the distribution of the
Tate-Shafarevich group of elliptic curves.  Delaunay's heuristics,
coupled with the rank distribution conjecture of Goldfeld and
Katz--Sarnak, imply that the average size of the 2-Selmer group is 3.
More recently, by a completely different approach, Poonen and Rains
\cite{PR} model the Selmer group as a random intersection of
isotropic subspaces of a quadratic space, and again, they predict that
the average size of the 2-Selmer group should be 3. These heuristics
thus give an interpretation for the number 3 that appears in Theorems
1.1 and 1.3.  For a further interpretation of the number 3 in terms
of local masses of 2-coverings of elliptic curves and the 
Tamagawa number of~$\PGL_2$, see Sections 3.3 and 3.6.

\vspace{.1in}
\subsection{Counting binary forms having bounded invariants
  (particularly quartic forms)}

We prove the above theorems by developing techniques to count integral
orbits, having bounded invariants, in certain {coregular
  representations} over $\Z$.  We define a {\it coregular
  representation} as a pair $(G,V)$, where $G$ is an algebraic group
and $V$ is a representation of $G$ (for our purposes, both defined
over~$\Z$) such that the ring of relative polynomial invariants of
$G(\C)$ on $V(\C)$ is a polynomial ring.  \pagebreak
Although our techniques are quite
general, in this article we concentrate primarily on the case where
$G=\GL_2$ and $V$ is the space of {\bf binary quartic forms}
$ax^4+bx^3y+cx^2y^2+dx^3y+ey^4$.

\vspace{.05in} The problem of counting integral binary forms having
bounded invariants is a classical one.  The case of binary quadratic
forms was first treated in the influential work {\it Disquisitiones
  Arithmeticae} of Gauss in 1801.  Gauss studied the action of
$\SL_2(\Z)$ on the space of integral binary quadratic forms
$f(x,y)=ax^2+bxy+cy^2$ ($a,b,c\in\Z$)\footnote{Gauss actually
  considered only forms where $b$ is even; however, from the modern
  point of view, it is natural to allow all three coefficients $a,b,c$
  to be arbitrary integers.} via linear substitution of variable, in
terms of the unique polynomial invariant for this action, namely the
discriminant $\Delta(f)=b^2-4ac$.  (The polynomial invariant
$\Delta(f)$ is ``unique'' in the sense that the ring of polynomial
invariants is generated by one element, namely $\Delta(f)$.)

Gauss conjectured, and Mertens~\cite{Mertens} and Siegel~\cite{Siegel}
proved, respectively, that:

\begin{theorem}[Mertens 1874/Siegel 1944]\label{GMS}
Let $h_D$ denote the number of $\SL_2(\Z)$-equivalence classes of
irreducible integral binary quadratic forms having discriminant $D$.
Then:
\begin{itemize}
\item[{\rm (a)}]
$\displaystyle{\quad\,\,\sum_{-X<D<0}h_D\,\sim\,\, \frac\pi{18}\cdot X^{3/2}}$;
\item[{\rm (b)}]
$\,\displaystyle{\sum_{0<D<X}h_D\log\;\!\varepsilon_D \,\sim\,\, \frac{\pi^2}{18}
\cdot X^{3/2}};$
\end{itemize}
here $\varepsilon_D=(t+u\sqrt{D})/2$, where $t,u$ are the smallest 
positive integral solutions of $t^2-Du^2=4$.
\end{theorem}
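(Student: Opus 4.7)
The plan is to proceed by geometry of numbers, counting integer points in a fundamental domain for the action of $\SL_2(\Z)$ on the space $V(\R)\cong\R^3$ of real binary quadratic forms $ax^2+bxy+cy^2$. The real points $V(\R)$ stratify by the sign of the unique invariant $\Delta=b^2-4ac$: the negative-discriminant locus consists of definite forms (splitting further into positive- and negative-definite components) with compact stabilizer $\mathrm{SO}_2(\R)\subset\SL_2(\R)$, while the positive-discriminant locus consists of indefinite forms with noncompact stabilizer $\mathrm{SO}(1,1)\cong\R^\times$. This dichotomy in stabilizer structure is precisely what accounts for the appearance of the regulator $\log\varepsilon_D$ in part~(b) but not in~(a).

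I would first construct a fundamental domain $\FF_\pm\subset V(\R)$ for the $\SL_2(\Z)$-action on the two open sets of nonzero discriminant. Using the averaging trick that will be employed throughout the paper, one writes
\[
\#\{v\in\FF_\pm\cap V(\Z)^{\mathrm{irr}}:|\Delta(v)|<X\}
=\frac{1}{\Vol(\mathcal{G}_0)}\int_{g\in\mathcal{G}_0}\#\bigl\{v\in V(\Z)^{\mathrm{irr}}\cap g\cdot\FF_\pm:|\Delta(v)|<X\bigr\}\,dg
\]
for a fixed compact $\mathcal{G}_0\subset\SL_2(\R)$; swapping sum and integral and invoking Davenport's lattice-point lemma reduces the integer-point count to the volume of $\FF_\pm\cap\{|\Delta|<X\}$, subject to a separate argument bounding the contribution of lattice points lying close to the cone $\Delta=0$.

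Next I would compute this volume using $|\Delta|$ and two further coordinates coming from reduction theory. For part~(a), the measure on $\FF_-$ is finite in each discriminant slice, and integrating over $-X<\Delta<0$ (and doubling to account for the two definite components) yields the constant $\pi/18$. For part~(b), the fundamental domain on indefinite forms is noncompact: within each discriminant slice $\Delta=D>0$ the $\SL_2(\Z)$-orbit of an integer form contributes a tube of volume proportional to $\log\varepsilon_D$, and integration naturally produces the weighted sum $\sum_D h_D\log\varepsilon_D\sim\tfrac{\pi^2}{18}X^{3/2}$. Reducible forms (those with $\Delta$ a nonzero perfect square in~(b), or $\Delta=0$) must be removed at the end, and their contribution is $O(X^{1+\varepsilon})$ by a standard divisor-type estimate.

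The hardest step will be the volume computation in part~(b). The noncompactness of $\mathrm{SO}(1,1)$ forces $\FF_+$ to extend unboundedly in the stabilizer direction, and extracting the factor $\log\varepsilon_D$ requires identifying that direction with the closed geodesic on $\SL_2(\Z)\backslash\SL_2(\R)/\mathrm{SO}_2(\R)$ whose length is exactly $2\log\varepsilon_D$. This identification is classical---essentially Gauss's continued-fraction ``river'' applied to $\sqrt{D}$---but carrying it out carefully is what is needed to obtain the specific constant $\pi^2/18$, rather than only the correct order of magnitude $\asymp X^{3/2}$.
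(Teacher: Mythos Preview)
The paper does not prove Theorem~\ref{GMS}. It is stated in the introduction purely as historical background: the paper writes ``Gauss conjectured, and Mertens~\cite{Mertens} and Siegel~\cite{Siegel} proved, respectively, that:'' and then quotes the theorem, with no argument given. So there is no ``paper's own proof'' to compare your proposal against.

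That said, your sketch is a reasonable outline of the classical argument, and it is very much in the spirit of what the paper does for binary \emph{quartic} forms in Section~2: construct a fundamental domain for $\SL_2(\Z)$ acting on the space of real forms, average over translates by a compact set $G_0\subset\SL_2(\R)$ to thicken the cusp, apply Davenport's lattice-point lemma (Proposition~\ref{davlem}), and compute the leading volume via a Jacobian change of variables. Your identification of the key structural difference between (a) and (b)---compact versus noncompact real stabilizer, with the regulator $\log\varepsilon_D$ arising as the length of the stabilizer direction in the indefinite case---is exactly right, and is the reason Siegel's result (b) is genuinely harder than Mertens' result (a). If you wanted to flesh this out, the quartic-form argument in \S\ref{redthbq}--\S\ref{secvol} is a good template; the quadratic case is simpler because there is only one invariant and no ``second cusp'' coming from a torus of rank~$>1$.
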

Note that $h_D$ and $\log\,\epsilon_D$ have important algebraic
number theoretic interpretations, namely, $h(D)$ is the (narrow) class number
and $\log\,\epsilon_D$ is the regulator of 
the unique quadratic order of discriminant $D$.  
Thus Theorem~\ref{GMS}(a) gives the average size of the class number
of imaginary quadratic orders up to a given absolute discriminant,
while (b) gives the average size of the class number times the
regulator of real quadratic orders up to a given discriminant.

The next natural case to consider is that of integral binary cubic
forms $f(x,y)=ax^3+bx^2y+cxy^2+dy^3$ ($a,b,c,d\in\Z$).  The group
$\GL_2(\Z)$ (or $\SL_2(\Z)$) again naturally acts on such forms, and
there is again a unique polynomial invariant for this action, namely,
the discriminant
$$\Delta(f)=b^2c^2+18abcd-4ac^3-4b^3d-27a^2d^2.$$
The question, as in the case of binary quadratic forms, is: how many
classes $h(D)$ of irreducible binary cubic forms are there with
discriminant $D$, on average, as $D$ varies?

This question was first answered by Davenport~\cite{Davenport2}:
\begin{theorem}[Davenport 1951]\label{dav}
Let $h(D)$ denote the number of $\GL_2(\Z)$-equivalence classes of
irreducible integral binary cubic forms having discriminant $D$.
Then:
\begin{itemize}
\item[{\rm (a)}]
$\,\displaystyle{\sum_{-X<D<0}h(D) \,\sim\,\, \frac{\pi^2}{24}\cdot X}$; 
\item[{\rm (b)}]
$\,\displaystyle{\,\,\sum_{0<D<X}\,\;\!h(D) \,\sim\,\, \frac{\pi^2}{72}\cdot X}.$
\end{itemize}
\end{theorem}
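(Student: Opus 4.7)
The plan is to follow a classical geometry-of-numbers argument. Let $V=V_\R$ be the four-dimensional real vector space of binary cubic forms, on which $\GL_2(\R)$ acts by linear change of variables. Partition $V$ by the sign of the discriminant: $V_\R^{(0)}:=\{f:\Delta(f)>0\}$ (three real roots) and $V_\R^{(1)}:=\{f:\Delta(f)<0\}$ (one real root). Each is a single $\GL_2(\R)$-orbit; fix a base point $v_i\in V_\R^{(i)}$ with finite stabilizer of order $n_i$. Using the Iwasawa decomposition $\GL_2(\R)=NAK$, I would construct a Siegel fundamental domain $\FF$ for $\GL_2(\Z)\backslash\GL_2(\R)$. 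Then the multiset $\RR_i(X):=\{g\cdot v_i : g\in\FF,\ |\Delta(g\cdot v_i)|<X\}$ is an $n_i$-fold cover of a fundamental domain for $\GL_2(\Z)$ acting on $V_\R^{(i)}\cap\{|\Delta|<X\}$, so the sum in the theorem equals $n_i^{-1}\cdot\#(\RR_i(X)\cap V_\Z^{\irr})$, where $V_\Z^{\irr}$ denotes the irreducible integer cubics.

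The key technical device for converting this lattice-point count into a volume is Davenport's averaging trick: choose a compact set $G_0\subset\GL_2(\R)$ of positive Haar measure and rewrite
\[
\#(\RR_i(X)\cap V_\Z^{\irr}) \;=\; \frac{1}{\Vol(G_0)}\int_{g_0\in G_0}\#\bigl(g_0\cdot\RR_i(X)\cap V_\Z^{\irr}\bigr)\,dg_0.
\]
After smearing in this way, the inner count over each Iwasawa slice away from the cusp is a lattice-point count in a box whose sides are all of comparable size, which by a classical lemma of Davenport equals its Euclidean volume up to an error bounded by the largest proper-face volume. Integrating over $\FF\cdot G_0$, the main term assembles to $\Vol(\RR_i(X))$.

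The principal obstacle is the cusp of $\FF$, where $a(t)=\mathrm{diag}(t,t^{-1})\in A$ has $t\to\infty$ and the Davenport error term can dominate the main volume. The crucial observation is that in these Iwasawa coordinates the leading coefficient $a$ of a cubic $f=ax^3+bx^2y+cxy^2+dy^3$ in the image $g\cdot v_i$ is forced small, so an integer point deep in the cusp must have $a=0$; but then $y\mid f$ and $f$ is \emph{reducible}, hence excluded from $h(D)$. A quantitative slicing of the cusp by the small value of $a$, together with a three-dimensional volume bound in the remaining coordinates $(b,c,d)$, shows that irreducible integer points contribute only $o(X)$ from the cusp and are absorbed in the error. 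This step is the heart of the argument.

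Finally, I would compute $\Vol(\RR_i(X))$ directly in Iwasawa coordinates, using that $\Delta$ scales homogeneously by a fixed power of $|\det g|$ under the $\GL_2(\R)$-action; the scaling contributes a factor linear in $X$, while the remaining integral is the standard hyperbolic volume of $\GL_2(\Z)\backslash\GL_2(\R)$, evaluated via $\zeta(2)=\pi^2/6$. Dividing by the stabilizer sizes $n_0$ and $n_1$ then yields the constants $\pi^2/72$ and $\pi^2/24$ asserted in parts (b) and (a), respectively, completing the proof.
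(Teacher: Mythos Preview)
The paper does not actually prove this theorem: Theorem~\ref{dav} is stated in the introduction as a classical result of Davenport (1951) and is included purely as historical motivation for the quartic case. There is no proof in the paper to compare your proposal against.

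That said, your sketch is a reasonable outline, and in fact it closely parallels the method the paper \emph{does} carry out for binary quartic forms in Section~2: constructing fundamental sets $L^{(i)}$ for the $\GL_2(\R)$-orbits, using the Siegel domain $\FF$, averaging over a compact $G_0\subset\GL_2(\R)$ to thicken the cusp, invoking Davenport's lattice-point lemma (Proposition~\ref{davlem}), showing that integer points deep in the cusp have vanishing leading coefficient and are therefore reducible, and computing the volume via a Jacobian calculation that produces the factor $\zeta(2)$. One historical quibble: the averaging-over-$G_0$ device you attribute to Davenport is in fact the later refinement introduced in \cite{dodqf}, \cite{dodpf}; Davenport's original 1951 argument handled the cusp more directly. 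Your sketch is also vague at the endgame---you do not specify the stabilizer orders $n_i$ or verify that the volumes of the positive- and negative-discriminant regions combine with them to give exactly $\pi^2/72$ and $\pi^2/24$---but the architecture is sound.
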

Davenport's theorem thus states that the number of equivalence classes
of irreducible binary cubic forms per discriminant is a constant on
average.  This too has an important algebraic number theoretic
interpretation.  Since equivalence classes of irreducible integral 
binary cubic forms are in bijection
with orders in cubic fields (see Delone--Faddeev's work~\cite{DF}),
Theorem~\ref{dav} states that there are a constant number of
(isomorphism classes) of cubic orders per discriminant, on average.
Davenport's theorem was an essential ingredient in the classical work
of Davenport and Heilbronn on the density of discriminants
of cubic fields (see~\cite{DH}).

%The case of binary {\it quartic} forms has not previously been treated
%in the same way.  
The next natural case to consider is that of binary quartic forms.
The group $\GL_2(\Z)$ again acts on the space of binary quartic forms
$f(x,y)=ax^4+bx^3y+cx^2y^2+dxy^3+ey^4$ ($a,b,c,d,e\in\Z$) by linear
substitution of variable.  Note that in each of the cases of binary
quadratic and binary cubic forms, the ring of invariants was generated
by one element. Binary quartic forms historically have been more
difficult to treat because the ring of invariants is now generated by
two independent invariants, traditionally denoted $I$ and $J$.  For
$f(x,y)$ as above, we have the following explicit formulae for these
invariants:
$$
\begin{array}{c}
I(f)=12ae-3bd+c^2,\\[.05in]
J(f)=72ace+9bcd-27ad^2-27eb^2-2c^3.
\end{array}
$$
Any other polynomial invariant for the action of $\GL_2(\Z)$ on binary
quartic forms can be expressed as a polynomial in these invariants;
for example, the discriminant $\Delta(f)$ of a binary quartic form can
be expressed in terms of $I(f)$ and $J(f)$ as follows:
$$\Delta(f):=\Delta(I(f),J(f)):=(4I(f)^3-J(f)^2)/27.$$

It follows from work of Borel and Harish-Chandra~\cite[Theorem
6.9]{BH} that the number of equivalence classes of integral binary
quartic forms, having any given fixed values of $I$ and $J$ (so long
as $I$ and $J$ are not both equal to zero), is
finite.\footnote{It is also true that the number of equivalence
  classes of binary quartic forms having a fixed nonzero value of the
  single invariant $\Delta(f)=\frac{1}{27}(4I(f)^3-J(f)^2)$ is finite,
  since the set of integral points on the elliptic curve
  $4x^3-y^2=27d$ is finite for each $d\neq 0$. However, the latter
  fact will not be used here.}  This raises the question as to how
many classes $h(I,J)$ of irreducible binary quartic forms with
invariants $I,J$ are there, on average, as the pair $(I,J)$ varies?

To answer this question, we require just a bit of notation.  
% Since $I$ is of degree 2 and $J$ of degree 3, it is natural to take
% powers of $I$ and $J$ so that they are of comparable degrees.
Let us define the (naive) {\it height} of $f(x,y)$ by
$H(f):=H(I,J):=\max\{|I^3|,J^2/4\}$ (the constant $1/4$ on $J^2$ is
present for convenience, and is not of any real importance).  Thus
$H(f)$ is a ``degree 6'' function on the coefficients of $f$, in the
sense that $H(rf)=r^6H(f)$ for any constant $r$.  We prove:

\begin{theorem}\label{bqcount}
  Let $h^{(i)}(I,J)$ denote the number of $\GL_2(\Z)$-equivalence classes
  of irreducible binary quartic forms having $4-2i$ real roots in $\P^1$
  and invariants equal to $I$ and $J$. Then:
\begin{itemize}
\item[{\rm (a)}] $\displaystyle{\sum_{\substack{H(I,J)<X}}h^{(0)}(I,J)
\,=\,\frac{4}{135}\zeta(2)X^{5/6}+O(X^{3/4+\epsilon})\,;}$
\item[{\rm (b)}] $\displaystyle{\sum_{\substack{H(I,J)<X}}h^{(1)}(I,J)
\,=\,\frac{32}{135}\zeta(2)X^{5/6}+O(X^{3/4+\epsilon})\,;}$
\item[{\rm (c)}] $\displaystyle{\sum_{\substack{H(I,J)<X}}h^{(2)}(I,J)
\,=\,\frac{8}{135}\zeta(2)X^{5/6}+O(X^{3/4+\epsilon})\,.}$
\end{itemize}
\end{theorem}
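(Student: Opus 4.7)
The plan is to count $\GL_2(\Z)$-orbits on $V(\Z)^{\irr}$ (irreducible binary quartic forms over $\Z$) by enumerating lattice points in fundamental domains for $\GL_2(\Z)$ acting on $V(\R)$, following the geometry-of-numbers tradition of Davenport, augmented by an averaging technique over a Siegel set (to handle the skew shape of the domain) and a cuspidal estimate (to handle its unboundedness).

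First, since the action of $\GL_2(\R)$ on $\{f \in V(\R) : \Delta(f) \neq 0\}$ has three connected orbits $V(\R)^{(i)}$ ($i=0,1,2$), distinguished by the number $4-2i$ of real roots, and acts with finite stabilizer on each level set $\{I=I_0,\, J=J_0\}$, I would choose for each $i$ a smooth cross-section $\sigma^{(i)}$ from invariant pairs $(I,J)$ to forms in $V(\R)^{(i)}$. Combined with a Siegel fundamental set $\RR \subset \GL_2(\R)$ for $\GL_2(\Z)\backslash\GL_2(\R)$, the product $\FF^{(i)}_X := \RR \cdot \sigma^{(i)}(\{H(I,J)<X\})$ is (up to bounded overlap) a fundamental domain for $\GL_2(\Z)$ on $\{f \in V(\R)^{(i)} : H(f) < X\}$. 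Since the $\GL_2(\Z)$-stabilizer of an irreducible integral form is finite of controlled order, the sum $\sum_{H<X} h^{(i)}(I,J)$ differs from $|\FF^{(i)}_X \cap V(\Z)^{\irr}|$ only by an explicit rational factor.

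Rather than counting lattice points directly in the long, skew set $\FF^{(i)}_X$, I would employ an averaging trick: pick a bounded $G_0 \subset \GL_2(\R)$ of positive Haar measure and apply the identity
\begin{equation*}
|\FF^{(i)}_X \cap V(\Z)^{\irr}| \;=\; \frac{1}{\Vol(G_0)} \int_{h \in G_0} |h \cdot \FF^{(i)}_X \cap V(\Z)^{\irr}| \, dh,
\end{equation*}
and then swap the order of integration. This reduces the count to lattice-point estimates in smoothly averaged regions, which in the main body of $\RR$ are well-approximated, via a suitable Davenport-type lemma, by their volumes with error $O(X^{3/4+\epsilon})$. The volumes themselves I would compute from the Jacobian of the change of variables from the coefficients $(a,b,c,d,e)$ to coordinates on $\GL_2(\R)$ together with the invariants $(I,J)$; integrating over $\{H(I,J)<X\}$ produces the claimed main terms $c_i \zeta(2) X^{5/6}$, with the differing constants $4/135,\, 32/135,\, 8/135$ reflecting the sizes of the real stabilizers and the relative ``widths'' of the three components $V(\R)^{(i)}$ over a fixed $(I,J)$.

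The central obstacle is the cuspidal region of $\RR$: in Iwasawa coordinates $g = n(u)\,a(t)\,k$ on $\GL_2(\R)^+$, the Siegel set extends to $t \to \infty$, and the corresponding forms in $\FF^{(i)}_X$ acquire arbitrarily small leading coefficient $a$. The heart of the argument is to split $\FF^{(i)}_X$ into a main body (with $t$ bounded) and a cuspidal part (with $t$ large) and to prove that all but $O(X^{3/4+\epsilon})$ integral points in the cuspidal part are \emph{reducible}, so that the irreducible contribution from the cusp is absorbed into the error term. The essential geometric input is that forms with $a=0$ are divisible by $y$ and hence reducible; one then must tally lattice points slice by slice in the cusp to show that the dominant contribution comes precisely from forms with $a=0$ (or with further low-order coefficients vanishing), while the generically irreducible strata are thin enough to be negligible. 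This cuspidal estimate---which distinguishes the two-invariant case from Davenport's original one-invariant treatment of binary cubic forms---is where I expect essentially all the technical work to lie.
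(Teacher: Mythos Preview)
Your proposal is essentially correct and closely mirrors the paper's proof in Section~2: cross-sections $L_V^{(i)}$ for the $\GL_2(\R)$-orbits, a Siegel fundamental domain $\FF$ for $\GL_2(\Z)\backslash\GL_2(\R)$, averaging over a compact $G_0\subset\GL_2(\R)$, Davenport's lattice-point lemma, a Jacobian volume computation yielding the constants, and the cuspidal reducibility estimate (forms deep in the cusp have $a=0$, hence are divisible by $y$). One correction: your averaging identity is mis-stated. The set $h\cdot\FF^{(i)}_X = (h\RR)\cdot\sigma^{(i)}(\{H<X\})$ is \emph{not} a fundamental domain for $\GL_2(\Z)$ when $h\notin\GL_2(\Z)$, so the integrand on the right is not constant and the identity fails as written. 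The paper instead inserts $h$ between the Siegel domain and the section, using that $\FF\, h\, L_V^{(i)}$ \emph{is} a fundamental domain for every $h$ (since $hL_V^{(i)}$ remains a cross-section for the $\GL_2(\R)$-action); one then swaps the integrals over $G_0$ and $\FF$ via unimodularity of Haar measure to obtain an integral over $g\in\FF$ of lattice-point counts in the thickened boxes $gG_0L_V^{(i)}$, which is where Davenport's lemma applies cleanly.
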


In order to obtain the average size of $h^{(i)}(I,J)$, as $(I,J)$
varies, we first wish to know which pairs $(I,J)$ can actually occur
as the invariants of an integral binary quartic form.  In the
quadratic and cubic cases, this is easy and well-known: a number
occurs as the discriminant of a binary quadratic (resp.\ cubic) form
if and only if it is congruent to $0$ or $1$~(mod 4).  

In the binary quartic case, we prove that a similar scenario occurs,
namely, an $(I,J)$ is {\it eligible}---i.e., it occurs as the
invariants of some integral binary quartic form---if and only if it
satisfies any one of a certain specified finite set of
congruence conditions modulo 27.  More precisely, we prove:

\begin{theorem}\label{eligible}
  A pair $(I,J)\in\Z\times\Z$ occurs as the invariants of an integral
  binary quartic form if and only if it satisfies
  one of the following congruence conditions:
\begin{itemize}
\item[{\rm (a)}] $I \equiv 0 \pmod3$ and $J \equiv 0 \pmod{27},$

\item[{\rm (b)}] $I\equiv 1 \pmod9$ and $J \equiv \pm 2 \pmod{27},$

\item[{\rm (c)}] $I\equiv 4 \pmod9$ and $J \equiv \pm 16 \pmod{27},$

\item[{\rm (d)}] $I\equiv 7 \pmod9$ and $J \equiv \pm 7 \pmod{27}.$
\end{itemize}

\end{theorem}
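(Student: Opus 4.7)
The plan is to prove the two directions of the equivalence separately, each by direct manipulation of the explicit formulas
\[
I = 12ae - 3bd + c^2, \qquad J = 72ace + 9bcd - 27ad^2 - 27eb^2 - 2c^3.
\]
Necessity will be a case analysis modulo $27$; sufficiency will be by exhibiting an explicit one-parameter family of forms realizing every admissible $(I,J)$.

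For necessity, I would reduce the two invariants modulo $9$ and $27$ respectively:
\[
I \equiv c^2 + 3(ae - bd) \pmod 9, \qquad J \equiv -2c^3 + 9c\,(2ae + bd) \pmod{27}.
\]
The key simplification is the identity $2ae + bd = 3ae - (ae - bd)$, so that $2ae + bd \equiv -(ae - bd) \pmod 3$. Writing $u := ae - bd \bmod 3$, the pair $(I \bmod 9,\, J \bmod 27)$ is therefore a function of $(c \bmod 9,\, u)$ alone. If $3 \mid c$, then all four terms on the right collapse and one reads off $3 \mid I$, $27 \mid J$, which is condition (a). Otherwise, there remain $6 \times 3 = 18$ subcases, indexed by $c \bmod 9 \in \{\pm1, \pm2, \pm4\}$ and $u \in \{0,1,2\}$, and I would simply tabulate that each of these yields a pair $(I \bmod 9, J \bmod 27)$ falling into one of (b), (c), (d).

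For sufficiency, I would realize an arbitrary eligible $(I,J)$ using the depressed family
\[
f_c(x,y) := x^3 y + c\,x^2y^2 + d\,xy^3 + e\,y^4,
\]
for which the invariants simplify dramatically to $I(f_c) = c^2 - 3d$ and $J(f_c) = -2c^3 + 9cd - 27e$. Solving these, $d = (c^2 - I)/3$ and $e = (c^3 - 3cI - J)/27$, so the question reduces to choosing, for each of the four congruence conditions, an integer $c$ making both $d$ and $e$ integral. A direct check suggests $c = 0$ handles (a), $c = \pm 1$ handles (b), $c = \pm 2$ handles (c), and $c = \pm 4$ handles (d); in each of the last three, the sign of $c$ is selected to match the sign appearing in the prescribed congruence for $J$, and integrality of $d$ (resp.\ $e$) then follows from the fact that $I \equiv c^2 \pmod 9$ (resp.\ $c^3 - 3cI \equiv J \pmod{27}$) in exactly those cases.

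The main obstacle is not conceptual but organizational: the necessity direction requires tabulating roughly twenty residue classes and checking that each lands in the asserted list, and care must be taken with signs so that the $\pm$ in conditions (b)-(d) match correctly. The sufficiency direction is essentially painless once the family $f_c$ with $a=0, b=1$ is recognized as giving a one-variable affine relation in $c$ between $I$ and $J$ modulo $27$.
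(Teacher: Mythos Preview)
Your proposal is correct. For sufficiency, you and the paper use the identical one-parameter family $f(x,y) = x^3y + cx^2y^2 + dxy^3 + ey^4$ (the paper writes the coefficients as $r,s,t$), exploiting exactly the formulas $I = c^2-3d$ and $J = -2c^3+9cd-27e$ that you wrote down; this part of the argument is essentially the same.

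For necessity, the two approaches diverge. You work directly with the quartic invariants, observing that $(I \bmod 9,\, J \bmod 27)$ depends only on $(c \bmod 9,\, (ae-bd) \bmod 3)$, and then tabulate the resulting $\sim 18$ subcases. The paper instead routes through the \emph{cubic resolvent}: to any integral binary quartic $f$ one associates the monic integral binary cubic
\[
g(X,Y) = X^3 + cX^2Y + (bd-4ae)XY^2 + (ad^2+b^2e-4ace)Y^3,
\]
which satisfies $I(g)=I(f)$ and $J(g)=J(f)$. Eligibility for quartics thus reduces to eligibility for monic cubics, and for the latter one may first translate $X \mapsto X+n$ so that the $X^2Y$-coefficient $r$ lies in $\{-1,0,1\}$, leaving only a three-way case split on $r$ (with a secondary split on $s \bmod 3$ when $r=\pm 1$). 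Your argument is self-contained and avoids introducing the resolvent; the paper's route has fewer cases and reuses a construction that is independently central elsewhere in the paper. Each buys something: yours is elementary and local to this theorem, theirs is shorter once the resolvent is in hand.

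One cosmetic remark: your phrase ``the sign of $c$ is selected to match the sign \ldots\ for $J$'' is slightly imprecise as stated---for instance, in case (b) the choice $c=+1$ yields $J\equiv -2$, not $+2$---but the correct sign pairing exists in each case and your integrality checks go through.
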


It follows that the number of eligible $(I,J)$, with $H(I,J)<X$, is a
constant times $X^{5/6}$; thus, by Theorem~\ref{bqcount}, the number
of classes of binary quartic forms per eligible $(I,J)$ is a finite
constant on average.  We have the following theorem:

\begin{theorem}\label{bqaverage}
  Let $h^{(i)}(I,J)$ denote the number of $\GL_2(\Z)$-equivalence
  classes of irreducible binary quartic forms having $4-2i$ real roots
  and invariants equal to $I$ and $J$. Let $n_0=4$, $n_1=2$, and
  $n_2=2$. Then, for $i=0,1,2$, we have:
$$\displaystyle\lim_{X\rightarrow\infty}
\displaystyle\frac{\displaystyle\sum_{H(I,J)<X}h^{(i)}(I,J)}
{\displaystyle\sum_{\substack{(I,J) \mbox{ {\rm \scriptsize{eligible}}
     }\\[.025in](-1)^i\Delta(I,J)>0\\[.025in]H(I,J)<X}}1}=\frac{2\zeta(2)}{n_i}.$$
\end{theorem}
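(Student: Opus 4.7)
The plan is to combine Theorem~\ref{bqcount} (which supplies the asymptotic numerator) with the density statement of Theorem~\ref{eligible} and an elementary area computation in the $(I,J)$-plane (which together supply the asymptotic denominator). Since the numerator is handed to us, the only real work is to count \emph{eligible} pairs $(I,J)$ with $H(I,J)<X$ subject to the sign condition $(-1)^i\Delta(I,J)>0$.

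For the area, the condition $H(I,J)<X$ is equivalent to $|I|<X^{1/3}$ and $|J|<2X^{1/2}$, so the total area is $8X^{5/6}$. The subregion where $\Delta(I,J)=(4I^3-J^2)/27$ is positive requires $4I^3>J^2$, forcing $I>0$ and $|J|<2I^{3/2}$; integrating over $0<I<X^{1/3}$ gives
\[
\mathrm{Vol}^{+}(X)\;=\;\int_0^{X^{1/3}}\!4I^{3/2}\,dI\;=\;\tfrac{8}{5}X^{5/6},
\]
and hence $\mathrm{Vol}^{-}(X)=\tfrac{32}{5}X^{5/6}$. For the density, the four cases (a)--(d) of Theorem~\ref{eligible} are mutually exclusive (they are distinguished by the value of $I\bmod 9$); counting residue classes mod $27$ gives $9\cdot 1+3\cdot 2+3\cdot 2+3\cdot 2=27$ eligible pairs out of $27^2=729$, so the eligible set has density $1/27$ in $\Z\times\Z$. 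By equidistribution of residues in a growing box, the number of eligible $(I,J)$ with $H(I,J)<X$ and a prescribed sign of $\Delta$ is asymptotically $\tfrac{1}{27}\mathrm{Vol}^{\pm}(X)$ (the locus $\Delta=0$ is negligible, being $1$-dimensional).

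It remains to match the sign of $\Delta$ to the real-root count: $\Delta>0$ iff the quartic has $0$ or $4$ real roots ($i=2$ or $i=0$), and $\Delta<0$ iff it has exactly $2$ real roots ($i=1$). For $i=0$ the denominator is $\sim\tfrac{8}{135}X^{5/6}$, so Theorem~\ref{bqcount}(a) yields ratio $(4/135)\zeta(2)/(8/135)=\zeta(2)/2=2\zeta(2)/n_0$; for $i=1$ the denominator is $\sim\tfrac{32}{135}X^{5/6}$, so Theorem~\ref{bqcount}(b) yields $\zeta(2)=2\zeta(2)/n_1$; for $i=2$ the denominator is $\sim\tfrac{8}{135}X^{5/6}$, so Theorem~\ref{bqcount}(c) yields $\zeta(2)=2\zeta(2)/n_2$, exactly as claimed. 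There is no serious obstacle here once Theorems~\ref{bqcount} and~\ref{eligible} are in hand; the only point requiring attention is the sign convention relating $\Delta(f)$ to the real-root type of $f$, which is classical.
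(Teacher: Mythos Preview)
Your proof is correct and follows essentially the same route as the paper. The paper packages the density computation as Lemma~\ref{conglat} (the eligible set is a union of $9$ lattices of index $243$, giving density $9/243=1/27$) and invokes Davenport's lattice-point lemma (Proposition~\ref{davlem}) to obtain Proposition~\ref{IJcount}; you instead count residue classes mod $27$ directly and appeal to equidistribution, but the content is the same, and the volumes $\tfrac{8}{5}X^{5/6}$ and $\tfrac{32}{5}X^{5/6}$ are exactly the paper's (\ref{volrplus}) and (\ref{volrminus}).
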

Thus, Theorem~\ref{bqaverage} says that 
the number of equivalence
classes of binary quartic forms per eligible $(I,J)$, having a given
number of real roots, is a constant on average.
This constant is either $\zeta(2)/2$ or $\zeta(2)$, depending on
whether the given number of real roots is 4 or less than 4, respectively.

We in fact prove a strengthening of Theorem~\ref{bqcount}; namely, we
obtain the asymptotic count of binary quartic forms, having bounded
invariants, satisfying any specified finite set of congruence
conditions.  Such a modification will be crucial for the
applications to elliptic curves, which we discuss next. 

\subsection{Binary quartic forms and $2$-Selmer groups of elliptic curves}
%\vspace{.1in} 

To use the latter counting results involving binary
quartic forms to understand the average size of 2-Selmer groups of
elliptic curves (as in
Theorem~\ref{mainellip}), we recall that an element of the 2-Selmer
group of an elliptic curve $E/\Q$ may be thought of as a {``locally
  soluble 2-covering''}.  A {\it $2$-covering} of $E/\Q$ is a genus one
curve $C/\Q$ together with maps $\phi:C\to E$ and $\theta:C\to E$, where
$\phi$ is an isomorphism defined over $\C$, and $\theta$ is a degree~4
map defined over $\Q$, such that the following diagram commutes:
$$\xymatrix{E \ar[r]^{[2]} &E\\C\ar[u]^\phi\ar[ur]_\theta}$$
Thus a 2-covering $C=(C,\phi,\theta)$ may be viewed as a ``twist over $\Q$ of
the multiplication-by-2 map on $E$''.   
Two $2$-coverings $C$ and $C'$ are said to be {\it isomorphic} if
there exists an isomorphism $\Phi:C\to C'$ defined over~$\Q$, and a
2-torsion point $P\in E$, such that the following diagram
commutes:
$$\xymatrix{E \ar[r]^{+P} &E\\C\ar[u]^\phi\ar[r]_\Phi &C'\ar[u]_{\phi'}}$$
A {\it soluble $2$-covering} $C$ is one that possesses a rational
point, while a {\it locally soluble $2$-covering} $C$ is one that
possesses an $\R$-point and a $\Q_p$-point for all primes $p$. Then
we have natural bijections
\begin{eqnarray*} 
\{\mbox{\rm soluble 2-coverings}\}/\sim &\,\longleftrightarrow\,& E(\Q)/2E(\Q); \\
\{\mbox{\rm locally soluble 2-coverings}\}/\sim &\,\longleftrightarrow\,& S_{2}(E),
\end{eqnarray*}
giving each set on the left too the structure of a finite abelian $2$-group.

How does counting elements of $S_{2}(E)$ lead to counting binary
quartic forms?  There is a result of Birch and Swinnerton-Dyer (see
\cite[Lemma 2]{BSD}) that states that any locally soluble 2-covering
$C$ possesses a canonically associated degree 2 divisor defined over
$\Q$, thus yielding a double cover $C\to\P^1$ ramified at 4 points.
We thus obtain a binary quartic form over $\Q$, well-defined up to
$\GL_2(\Q)$-equivalence!  This connection between 2-Selmer group
elements and binary quartic forms was first introduced and used in the
original elliptic curve computations of Birch and Swinnerton-Dyer,
which led them to their celebrated conjecture.  Indeed, this
interpretation of binary quartic forms in terms of 2-Selmer groups is
still one of the fastest ways of computing and enumerating ranks of elliptic curves in
practice, as in, e.g., Cremona's influential {\tt mwrank} program.

We use this connection and the above counting results on binary
quartic forms to prove Theorems~\ref{mainellip} and
\ref{ellipcong}, as follows:

\begin{itemize}
\item Given $A,B\in\Z$, 
construct an {\it integral} binary quartic
  form $f$ for each element of $S_{2}(E_{A,B})$ such that
\begin{itemize}
\item $y^2=f(x)$ gives the desired 2-covering;
\item the invariants $(I(f),J(f))$ of $f$ agree with the invariants $(A,B)$
  of the elliptic curve (at least up to bounded powers of 2 and 3);
\end{itemize}
\item Count these integral binary quartic forms via congruence versions of Theorem~1.6.  The
  relevant binary quartic forms are actually defined by infinitely many
  congruence conditions, so a sieve has to be performed.

\item A uniformity estimate, which shows that the error term does not
  grow too large as more and more of the relevant congruence
  conditions are imposed, must be proven to perform this sieve.  This
  is perhaps the most technical ingredient in this work.
  It is accomplished by embedding the space of binary quartic forms
  into a certain larger space---namely, the space of pairs of ternary
  quadratic forms---where such uniformity estimates are more amenable
  and have been studied previously 
  in the context of counting quartic fields~\cite{dodqf}.
\end{itemize}

 This paper is organized as follows.  In Section 2, we
  study the distribution of $\GL_2(\Z)$-equivalence classes of binary
  quartic forms with respect to their fundamental invariants $I$ and
  $J$; in particular, we prove
  Theorems~\ref{bqcount}--\ref{bqaverage}.
We also prove the uniformity estimates that are necessary
  to count binary quartic forms satisfying our desired infinite sets
  of congruence conditions.  

   In Section 3, we describe the precise connection
  between binary quartic forms and elements in the 2-Selmer groups of
  elliptic curves.  This connection allows us, through the use of
  certain mass formulae for elliptic curves over $\Q_p$, to compute
  the average size of the 2-Selmer groups of elliptic curves (or of
  appropriate families of elliptic curves) via a count of binary
  quartic forms satisfying a certain weighted infinite
  set of congruence conditions.  We then apply the uniformity results
  of Section~2 to count these binary quartic forms, thus completing
  the proofs of Theorems~\ref{mainellip} and \ref{ellipcong}.

%In a companion paper, we will show how the above methods can also be
%used to obtain results on the average size of the 2-class groups of
%{\it monogenic} cubic fields, i.e., cubic fields for which the ring of
%integers is generated by one element.  Surprisingly, these results will
%show that monogenicity seems to have a direct altering effect on the
%behavior of the class group.

%\begin{itemize}
%\item[{\rm (a)}] $\displaystyle\lim_{X\rightarrow\infty}\displaystyle\frac{\displaystyle\sum_R\Cl_2(R)}{\displaystyle\sum_R1}=5/4,$
%\item[{\rm (b)}] $\displaystyle\lim_{X\rightarrow\infty}\displaystyle\frac{\displaystyle\sum_R\Cl_2(R)}{\displaystyle\sum_R1}=3/2,$
%\item[{\rm (c)}] $\displaystyle\lim_{X\rightarrow\infty}\displaystyle\frac{\displaystyle\sum_R\Cl_2^+(R)}{\displaystyle\sum_R1}=2.$
%\end{itemize}
%The sum in {\rm (b)} ranges over negative discriminant maximal $n$-monogenized cubic orders $R$ which are {\it n-genized} for some $n<X^{\delta}$, and have
%heights bounded by $X$.
%The sums in {\rm (a)} and {\rm (c)} range over positive discriminant
%maximal $n$-monogenized cubic orders $R$ which are {\it n-genized} for
%some $n$ less than $X^\delta$ and have 
%heights bounded by $X$.
%\end{theorem}

  \section{The number of classes of integral binary quartic forms
    having bounded invariants}

Let $V_\R$ denote the vector space of binary quartic
forms over the real numbers $\R$.  We express an element $f\in V_\R$
in the form $f(x,y)=ax^4+bx^3y+cx^2y^2+dxy^3+ey^4$, where $a,b,c,d,$ and $e$
are real numbers. Such an $f\in V_\R$ is said to be {\it integral} if
$a,b,c,d,e\in\Z$.

In this section, we derive asymptotics for the number of
$\GL_2(\Z)$-equivalence classes of irreducible integral binary quartic
forms having bounded invariants.  We also describe how these
asymptotics change when we restrict to counting those binary
quartic forms satisfying certain specified sets of congruence
conditions.  In particular, we prove
Theorems~\ref{bqcount}--\ref{bqaverage}.

The group $\GL_2(\R)$ naturally acts on $V_\R$; namely, an element $\gamma
\in \GL_2(\R)$ acts on $f(x,y)$ by linear substitution of variable:
\begin{equation}
  \label{equntwistedaction}
\gamma\cdot f(x,y)=f((x,y)\cdot \gamma).  
\end{equation}
%$$\left(\begin{array}{cc} {p} & {q} \\ {r} & {s} \end{array}\right) \cdot f(x,y)=f(px+ry,qx+sy).$$ 
This action of $\GL_2(\R)$ on $V_\R$ is a left action, i.e.,
$(\gamma_1\gamma_2) \cdot f=\gamma_1\cdot(\gamma_2\cdot f)$.

We also consider the action of $\SL_2^\pm(\R)$ on $V_\R$, where
$\SL_2^\pm(\R)\subset\GL_2(\R)$ is the subgroup of elements in
$\GL_2(\R)$ having determinant equal to $\pm 1$.  The ring of
invariants for this action is generated by two independent generators
of degrees 2 and 3 which are traditionally denoted by $I$ and $J$,
respectively. If $f(x,y)=ax^4+bx^3y+cx^2y^2+dxy^3+ey^4$, then
\begin{equation}
  \begin{array}{rcl}
    \displaystyle I(f)&=&12ae-3bd+c^2,\\[.02in]
    \displaystyle J(f)&=&72ace+9bcd-27ad^2-27eb^2-2c^3.
  \end{array}
\end{equation}
%Thus $I$ is an invariant of degree $2$ and $J$ is an invariant of
%degree $3$. 
The quantities $I(f)$ and $J(f)$ are also {\it relative invariants}
for the action of $\GL_2(\R)$ on $V_\R$: we have 
\begin{equation}\label{eq412}
  \begin{array}{rcl}
I(\gamma \cdot f)&=&(\det\gamma)^4I(f),\\[.03in]
J(\gamma \cdot f)&=&(\det \gamma)^6J(f).
  \end{array}
\end{equation}
The discriminant
$\Delta(f)$ of a binary quartic form $f$, being a relative invariant
of degree $6$, can thus be expressed in terms of $I$ and $J$, namely,
$\Delta(f)=(4I(f)^3-J(f)^2)/27$.  We define the {\it height}
$H(f)$ of a binary quartic form $f$ by
\begin{equation}\label{hbqdef}
H(f):=H(I,J)=\max\{|I|^3,J^2/4\}.
\end{equation}
%Thus the height of $f$ is defined to
%be a constant multiple of the maximum of the sizes of the two terms in
%the discriminant.

The action of $\GL_2(\Z)$ on $V_\R$ evidently preserves the lattice
$V_\Z$ consisting of the integral elements of $V_\R$, and so we may
ask: how many $\GL_2(\Z)$-classes of forms are there having height at
most $X$?  More precisely, we may ask: 
how many $\GL_2(\Z)$-classes of forms are there with height at
most $X$ and a given number of real roots? 

To this end, for $i=0$, $1$, and $2$, let $V_\Z^{(i)}$ denote the set
of elements in $V_\Z$ having nonzero discriminant and $i$ pairs of
complex conjugate roots and $4-2i$ real roots in $\P^1_\C$.  For any
$\GL_2(\Z)$-invariant set $S\subset V_\Z$, let $N(S;X)$ denote the
number of $\GL_2(\Z)$-equivalence classes of irreducible elements
$f\in S$ satisfying $H(f)<X$.  Then the main theorem of this section
is the following restatement of Theorem~\ref{bqcount}:

\begin{theorem}\label{refbq}
We have
\begin{itemize}
\item[{\rm (a)}] $N(V_\Z^{(0)};X)=\displaystyle\frac{4}{135}\zeta(2)X^{5/6}+O(X^{3/4+\epsilon});$
\item[{\rm (b)}] $N(V_\Z^{(1)};X)=\displaystyle\frac{32}{135}\zeta(2)X^{5/6}+O(X^{3/4+\epsilon});$
\item[{\rm (c)}] $N(V_\Z^{(2)};X)=\displaystyle\frac{8}{135}\zeta(2)X^{5/6}+O(X^{3/4+\epsilon}).$
\end{itemize}
\end{theorem}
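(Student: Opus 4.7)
The plan is a geometry-of-numbers argument in the spirit of Bhargava's method: realize $N(V_\Z^{(i)};X)$ as a lattice-point count in a fundamental domain for $\GL_2(\Z)$ acting on the height-bounded part of $V_\R^{(i)}$. Pick a Siegel fundamental domain $\mathcal{F}$ for $\GL_2(\Z)\backslash\GL_2(\R)$ and, on each real component $V_\R^{(i)}$, a fundamental set $R_X^{(i)}$ for the (generically free) $\GL_2(\R)$-action on the forms with $H<X$; since the generic real stabilizer of an irreducible binary quartic with $4-2i$ real roots has order $n_i$, one has
\[
N(V_\Z^{(i)};X) \;=\; \tfrac{1}{n_i}\,\#\bigl(V_\Z^{\mathrm{irr}}\cap \mathcal{F}\cdot R_X^{(i)}\bigr) \;+\; O(1).
\]
To count lattice points in the non-compact region $\mathcal{F}\cdot R_X^{(i)}$, I would apply Bhargava's averaging trick, smoothing the count by integrating over a compact $G_0\subset \GL_2(\R)$ of positive Haar measure to obtain
\[
N(V_\Z^{(i)};X) \;=\; \tfrac{1}{n_i\,\Vol(G_0)}\int_{g\in\mathcal{F}}\#\bigl(V_\Z^{\mathrm{irr}}\cap g\cdot G_0\cdot R_X^{(i)}\bigr)\,dg \;+\; O(1).
\]

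Using the Iwasawa decomposition $g=n(u)\,a(t)\,k\cdot\lambda$ of $\GL_2(\R)$, with $a(t)=\mathrm{diag}(t^{-1},t)$ and $\lambda>0$ a central scalar, I split $\mathcal{F}$ into a main body (with $t$ in a bounded range) and a cusp ($t\to\infty$). In the main body, $g\cdot G_0\cdot R_X^{(i)}$ is a bounded, smoothly-deforming expanding region, and a standard Davenport-type lattice-point lemma yields
\[
\#(V_\Z\cap g\cdot G_0\cdot R_X^{(i)}) \;=\; \Vol(g\cdot G_0\cdot R_X^{(i)}) \;+\; O(\text{max projected side length}).
\]
Integrating over the main body of $\mathcal{F}$, applying the Jacobian of the change of coordinates $(a,b,c,d,e)\leftrightarrow(I,J,\text{orbit parameters})$, and using $\Vol(\SL_2(\Z)\backslash\SL_2(\R))=\zeta(2)$ (suitably adjusted for $\GL_2^\pm$) together with the explicit areas of the three real components of $\{H(I,J)<X,\, (-1)^i\Delta(I,J)>0\}$, a rescaling $(I,J)\mapsto(X^{1/3}I',X^{1/2}J')$ exposes the $X^{5/6}$ growth and produces precisely the constants $\tfrac{4}{135}\zeta(2)$, $\tfrac{32}{135}\zeta(2)$, and $\tfrac{8}{135}\zeta(2)$ in the three cases.

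The main obstacle, and the most delicate step, is the \emph{cusp} $t\to\infty$. The key observation is that for a binary quartic $f=ax^4+bx^3y+cx^2y^2+dxy^3+ey^4$ in $g\cdot G_0\cdot R_X^{(i)}$ one has the coefficient bound $|a|\ll t^{-4}X^{1/6}$, so once $t\gg X^{1/24}$ the integrality condition $a\in\Z$ forces $a=0$. But integral forms with $a=0$ are divisible by $y$ and are therefore \emph{reducible}, hence contribute nothing to the irreducible count $N(V_\Z^{(i)};X)$; this is the classical ``cusp kills reducibles'' mechanism. The residual task is quantitative: bound (i) the number of integral $f$ in the cusp with $a\neq 0$, where $a$ is a nonzero integer of size $\ll t^{-4}X^{1/6}$ and hence sparse, and (ii) integral $f$ with $a=0$ whose quotient $f/y$ happens to be irreducible, so that the total cusp contribution is $O(X^{3/4+\epsilon})$. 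I would carry this out by a slicing argument in the cusp chart, dividing into dyadic ranges of $t$ and estimating lattice counts per slice following the pattern of \cite[Proposition 23]{dodqf} and \cite[Proposition 18]{dodpf}. Adapting those one-invariant cusp estimates to the two-invariant height $H=\max(|I|^3,J^2/4)$, where the permissible range of $t$ now depends on both $I$ and $J$, is the technical heart of the proof and is where the stated error exponent $3/4+\epsilon$ will emerge.
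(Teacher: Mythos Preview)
Your overall framework---fundamental sets, Bhargava averaging over a compact $G_0$, Davenport's lattice-point lemma, and the Jacobian volume computation---is exactly the paper's approach, and your identification of the constants via $\Vol(\SL_2(\Z)\backslash\SL_2(\R))=\zeta(2)$ times the area of the $(I,J)$-region is correct. However, you have mislocated where the genuine work lies, and two ingredients are missing.

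First, your task (ii) is a phantom: if $a=0$ then $f$ has $y$ as a factor and is reducible as a \emph{quartic}, full stop; whether $f/y$ is an irreducible cubic is irrelevant, and such $f$ simply do not appear in $N(V_\Z^{(i)};X)$. There is nothing to bound. Likewise, your task (i) dissolves in the paper's setup: once one restricts to integral points with $a\neq 0$, the condition $|a|\geq 1$ forces $t\leq C\lambda$, so the cusp beyond that range is literally empty for such points. The $O(X^{3/4})$ error does not come from the cusp at all; it comes from integrating the Davenport projection error $O(\max\{t^4\lambda^{16},1\})$ over the region $t\leq C\lambda$, $\lambda\leq X^{1/24}$.

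Second, two estimates you omit are what actually close the argument. After counting \emph{all} integral points with $a\neq 0$ by volume plus Davenport error, you must subtract the reducible ones with $a\neq 0$; this is a separate combinatorial estimate (the paper's Lemma~\ref{reducible}) giving $O(X^{2/3+\epsilon})$, proved by bounding the number of possible linear or quadratic factors via divisor bounds on $a$ and $e$. You also need to control forms whose $\GL_2(\Z)$-stabilizer exceeds $\{\pm I\}$, since for those the multiset $\mathcal{F}hL$ overcounts; this is the paper's Lemma~\ref{gl2zbigstab}, giving $O(X^{3/4+\epsilon})$, and is the source of the $\epsilon$ in the final error. (Incidentally, the real stabilizer has order $2n_i$, not $n_i$; the factor $n_i$ appears because the integral stabilizer always contains $\pm I$.) The references you cite from \cite{dodqf} and \cite{dodpf} are uniformity estimates for non-maximal orders, not cusp estimates, and are not what is needed here.
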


Our strategy to prove Theorem~\ref{refbq} is as follows. In
\S\ref{redthbq}, we develop the necessary reduction theory
needed to establish convenient fundamental domains for the
action of $\GL_2(\Z)$ on $V_\R$. The primary difficulty in counting points in
these fundamental domains is that they are not bounded, but instead
have a rather complicated cuspidal region going off to infinity.  To
deal with and effectively handle this cusp, in
\S\ref{redsec} we investigate the
distribution of reducible and irreducible points inside these
fundamental domains. Specifically, we prove that the cusp contains
only reducible points, while the remainder of the domain outside the
cuspidal region contains primarily irreducible points.
In \S\ref{avgsec}, we develop a refinement of an averaging
method introduced in \cite{dodqf}, \cite{dodpf} to count
points in these fundamental regions in terms of the volumes of these
domains. The volumes of the fundamental regions are then computed in
\S\ref{secvol}, completing the proof of Theorem~\ref{refbq}.

In \S\ref{secbqcong}, we prove a stronger version of Theorem
\ref{refbq} where we restrict to counting those binary quartic forms
whose coefficients satisfy finitely many congruence conditions.  In
\S\ref{secunif}, we prove the necessary estimates that uniformly bound
the number of $\GL_2(\Z)$-orbits on binary quartic forms having
bounded height whose discriminants are divisible by the square of some
large prime.  In~\S\ref{squarefree}, we then describe how these
uniformity estimates allow one to count
the number of $\GL_2(\Z)$-orbits of binary quartic forms of bounded
height having {squarefree discriminant} (or satisfying other similar
sets of infinitely many congruence conditions).  We will require such
results when we prove Theorems~\ref{mainellip} and \ref{ellipcong} in
Section 3.

\subsection{Reduction theory}\label{redthbq}

For $i=0$, $1$, and $2$, let $V_\R^{(i)}$ denote the set of points in
$V_\R$ having nonzero discriminant and $i$ pairs of complex roots and $4-2i$ real roots in
$\P^1_\C$. 
%Forms in $V_\R^{(i)}$ have exactly $i$ conjugate pairs of
%complex roots in $\P^1_\C$. 
Then $V_\R^{(2)}$
is the set of {\it definite} forms in $V_\R$, i.e., forms $f(x,y)$
that take only positive or only negative values when evaluated at
nonzero vectors $(x_0,y_0)\in\R^2$.  Let $V_\R^{(2+)}$ (resp.\
$V_\R^{(2-)}$) denote the subset of $V_\R^{(2)}$ consisting of the
{\it positive definite forms} (resp.\ {\it negative definite forms}).
Note that for $i=0$, $1,$ and $2$ we have $V_\Z^{(i)}=V_\R^{(i)}\cap
V_\Z$.  We analogously define $V_\Z^{(i)}=V_\R^{(i)}\cap V_\Z$ for
$i=2+$ and $2-$.

We then have the following facts (see \cite[Remark 2]{cremred}):
\begin{itemize}
\item[1.] The set of binary quartic forms in $V_\R$ having fixed invariants
  $I$ and $J$ consists of just one $\SL^\pm_2(\R)$-orbit if
  $4I^3-J^2<0$; this orbit lies in $V_\R^{(1)}$. 

\item[2.] The set of binary quartic forms in $V_\R$ having fixed invariants
  $I$ and $J$ consists of three $\SL^\pm_2(\R)$-orbits if
  $4I^3-J^2>0$; in that case, there is one such orbit from each of $V_\R^{(0)}$,
  $V_\R^{(2+)}$, and $V_\R^{(2-)}$.
\end{itemize}

Since $I(g \cdot f)=(\det g)^4I(f)$ and $J(g \cdot f)=(\det g)^6J(f)$,
it follows that two forms $f_1,f_2\in V_\R^{(i)}$ are
$\GL_2(\R)$-equivalent if and only if there exists a positive constant
$\lambda \in \R$ with $I(f_1)=\lambda^2I(f_2)$ and
$J(f_1)=\lambda^3J(f_2)$. Given a pair
$(I,J)\neq(0,0)$, there always exists a positive constant $\lambda$
such that $H(\lambda^2I,\lambda^3J)=1$.
Therefore, for $i=0$, $2+$, or $2-$ (resp.\ for $i=1$), a fundamental
set $L^{(i)}$ for the action of $\GL_2(\R)$ on $V^{(i)}_\R$ can be
constructed by choosing one form $f\in V^{(i)}_\R$, having invariants
$I$ and $J$, for each $(I,J)$ such that $H(I,J)=1$ and $4I^3-J^2>0$
(resp.\ $4I^3-J^2<0$). Table~\ref{table1} provides explicit
constructions of such fundamental sets $L^{(i)}$.  

\vspace{-.15in}
\begin{table}[ht]
\centering
\begin{eqnarray*}
L^{(0)}&=&\Bigl\{x^3y-\frac{1}{3}xy^3-\frac{J}{27}y^4:-2 <J <2\Bigr\}\\[.125in]
L^{(1)}&=&\Bigl\{x^3y-\frac{I}{3}xy^3+\frac{\pm 2}{27}y^4:-1\leq I < 1\Bigr\}\cup\Bigl\{x^3y+\frac{1}{3}xy^3-\frac{J}{27}y^4:-2 <J < 2\Bigr\}\\[.125in]
L^{(2+)}\!\!\!&=&\Bigl\{\frac{1}{16}x^4-\frac{\sqrt{2-J}}{3\sqrt{3}}x^3y+\frac{1}{2}x^2y^2+y^4:
-2<J< 2\Bigr\}\\[.125in]
L^{(2-)}\!\!\!&=&\left\{f:-f \in L^{2+}\right\}
\end{eqnarray*}\vspace{-.25in}
\caption{Explicit constructions of fundamental sets $L^{(i)}$ for
  $\GL_2(\R)\backslash V_\R^{(i)}$}\label{table1}
\end{table}
\vspace{-.075in}

\noindent
The key fact that
we use about these chosen fundamental sets $L^{(i)}$ is that the
coefficients of all the binary quartic forms in these $L^{(i)}$ are
 bounded; i.e., the $L^{(i)}$ all lie in a bounded
subset of $V_\R$.  It follows that, for any $h$ lying in a fixed compact
subset $G_0\subset \GL_2(\R)$, the set $h\cdot L^{(i)}$ is also a
fundamental set for the action of $\GL_2(\R)$ on $V_\R^{(i)}$, and
all coefficients are then bounded independent of $h$.

We will have need for the following lemma, whose proof is
postponed to \S\ref{secaux}:
\begin{lemma}\label{rstab}
  Let $f$ be an element in $V_\R^{(i)}$ having nonzero discriminant.
  Then the order of the stabilizer of $f$ in $\GL_2(\R)$ is $8$ if $i=0$ or
  $2$, and $4$ if $i=1$.
\end{lemma}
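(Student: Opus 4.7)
The plan is to analyze $\Stab_{\GL_2(\R)}(f)$ by factoring through the projection $p:\GL_2(\R)\to\PGL_2(\R)$ and studying its image and kernel. Since $\Delta(f)\ne 0$, at least one of $I(f),J(f)$ is nonzero, and the identities $I(\gamma\cdot f)=(\det\gamma)^4 I(f)$ and $J(\gamma\cdot f)=(\det\gamma)^6 J(f)$ force $\det\gamma=\pm 1$ for any $\gamma\in\Stab_{\GL_2(\R)}(f)$. The kernel of $p$ restricted to the stabilizer consists of scalar matrices $\lambda I$ with $\lambda^4=1$, which over $\R$ is $\{\pm I\}$, of order $2$. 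It therefore suffices to determine the order of $p(\Stab_{\GL_2(\R)}(f))\subseteq\PGL_2(\R)$.

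The image lies in the stabilizer in $\PGL_2(\R)$ of the set $R$ of four roots of $f$ in $\P^1(\C)$. The $\PGL_2(\C)$-stabilizer of such a four-point set always contains the Klein four-subgroup $V_4$ whose three non-identity elements are the pair-swap involutions corresponding to the three partitions of $R$ into unordered pairs; this stabilizer can be strictly larger only at the anharmonic and equianharmonic cross-ratios, and those extra symmetries will be shown not to lift. A pair-swap involution in $V_4$ lies in $\PGL_2(\R)$ iff its underlying partition is stable under complex conjugation. A short case analysis yields: when $i=0$, all three partitions are trivially conjugation-stable; when $i=2$, all three are again stable (each pair is either self-conjugate, or the two pairs are swapped by conjugation); when $i=1$, only the partition pairing the two real roots is stable. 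Thus $|V_4\cap\PGL_2(\R)|$ equals $4$, $2$, $4$ for $i=0,1,2$ respectively.

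The hardest step, which I expect to be the main obstacle, is to show that each real pair-swap $\bar\sigma\in V_4\cap\PGL_2(\R)$ actually lifts to an element of $\Stab_{\GL_2(\R)}(f)$: given any lift $\sigma_0\in\GL_2(\R)$, we have $\sigma_0\cdot f=c_0 f$ for some $c_0\in\R^\times$, and a real rescaling $\lambda\sigma_0$ fixes $f$ iff $c_0>0$. I would prove $c_0>0$ by analyzing the sign of $f$ on the arcs of $\P^1(\R)$ cut out by its real roots. For $i=2$, $f$ is definite, so the claim is immediate. For $i=1$, after normalizing $\sigma_0$ to have trace zero, a direct determinant computation yields $\det\sigma_0<0$; hence $\bar\sigma$ is a hyperbolic involution whose two real fixed points lie one in each of the two arcs between the real roots, so each arc is preserved setwise and the sign of $f$ is unchanged. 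For $i=0$, each of the three real pair-swaps acts on $\P^1(\R)\cong S^1$ either as a half-turn rotation (exchanging antipodal arcs, which carry matching $f$-signs because $f$ alternates in sign along $S^1$) or as a reflection (fixing two arcs setwise and swapping the other two of matching sign). Finally, I would rule out the extra projective symmetries appearing at the anharmonic ($J(f)=0$) or equianharmonic ($I(f)=0$) cross-ratios by a short direct computation, showing that these extra elements are orientation-reversing involutions or $4$-cycles that send some arc of $\P^1(\R)$ to one of opposite $f$-sign, so $c_0<0$ and no real lift exists. Combining everything gives $|\Stab_{\GL_2(\R)}(f)|=2\cdot|V_4\cap\PGL_2(\R)|$, which equals $8$ for $i=0,2$ and $4$ for $i=1$, as claimed.
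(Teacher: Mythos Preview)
Your argument is correct, but it follows a genuinely different route from the paper's. The paper does not analyze roots and signs on $\P^1(\R)$ at all; instead it uses the embedding $\phi:V_\R\hookrightarrow W_\R$ into pairs of ternary quadratic forms (sending $f$ to $(A_1,B_f)$) together with the homomorphism $\rho:\PGL_2(\R)\to\SL_3(\R)$ whose image is $\mathrm{SO}(A_1,\R)$. The $G_\R=\GL_2(\R)\times\SL_3(\R)$-stabilizer of $(A_1,B_f)$ is, by the structure theory of \cite{quarparam}, a specific subgroup of $S_4$ (namely $S_4$, $C_2\times C_2$, or $D_4$ according as $i=0,1,2$), and the elements coming from $\PGL_2(\R)$ via $\rho$ are exactly those lying in the kernel $V_4$ of $S_4\to S_3$. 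Intersecting gives $4,2,4$ directly, with no lifting obstruction to check; the factor of $2$ from $\{\pm I\}$ then finishes the count.

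What each approach buys: the paper's route is cleaner and more conceptual once the $W_\R$ machinery is in place---the question ``which root-set symmetries actually preserve $f$ rather than just its zero locus'' is answered structurally by the resolvent map $S_4\to S_3$, so no sign bookkeeping is needed, and the anharmonic/equianharmonic cases require no separate treatment. Your route is entirely self-contained and elementary (only classical facts about M\"obius involutions and cross-ratio), at the cost of the case analysis you flagged as the main obstacle. One small sharpening of your outline: for $i=2$ at special cross-ratios there is nothing to rule out, because the centralizer in $D_4$ or $A_4$ of the double (resp.\ single) transposition induced by complex conjugation is already contained in $V_4$; your sign argument is only needed for $i=0$ (anharmonic) and $i=1$ (anharmonic), where it works exactly as you describe.
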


\vspace{.1in}

Let $\FF$ denote
Gauss's usual fundamental domain for $\GL_2(\Z)\backslash \GL_2(\R)$
in $\GL_2(\R)$. It follows from
\cite[Ch.\ 7, Th.\ 1]{Sercia} that $\FF$ may be expressed in the form $\FF=
\{n\alpha k\lambda:n(u)\in N'(t),\alpha(t)\in A',k\in K,\lambda\in\Lambda\},$ where
\begin{equation}\label{nak}
N'(t)= \left\{\left(\begin{array}{cc} 1 & {} \\ {u} & 1 \end{array}\right):
        u\in\nu(t) \right\}    , \;\;
A' = \left\{\left(\begin{array}{cc} t^{-1} & {} \\ {} & t \end{array}\right):
       t\geq \sqrt[4]3/\sqrt2 \right\}, \;\;
\Lambda = \left\{\left(\begin{array}{cc} \lambda & {} \\ {} & \lambda 
        \end{array}\right):
        \lambda>0 \right\},
\end{equation}
and $K$ is as usual the (compact) real orthogonal group ${\rm SO}_2(\R)$;
here $\nu(t)$ is a union of one or two subintervals of
$[-\frac12,\frac12]$ depending only on the value of $t$.

For $i=0$, $1$, $2+$, and $2-$, let $2n_i$ denote the cardinality of
the stabilizer in $\GL_2(\R)$ of an irreducible element $v\in
V^{(i)}_\R$.  Then, by Lemma \ref{rstab}, we have $n_0=4,$ $n_1=2,$
$n_{2+}=4,$ and $n_{2-}=4$. For $h\in\GL_2(\R)$, we regard $\FF h\cdot
L^{(i)}$ as a multiset, where the multiplicity of a point $x$ in
$\FF h\cdot L^{(i)}$ is given by the cardinality of the set
$\{g\in\FF\,\,:\,\,x\in gh\cdot L^{(i)}\}$.  
We claim that the $\GL_2(\Z)$-equivalence class of $x$ in $V^{(i)}_\R$ is
represented $m(x)
:=\#\Stab_{\GL_2(\R)}(x)/\#\Stab_{\GL_2(\Z)}(x)$ 
times in the multiset $\FF h\cdot L^{(i)}$; \;i.e., the multiplicity of 
$x'$ in $\FF h\cdot L^{(i)}$, summed over all $x'\in V_\Z$ that are
$\GL_2(\Z)$-equivalent to $x$, is equal to $m(x)$.  
Indeed, for any element $x\in
V_\R^{(i)}$, there exists a unique element $x_L \in h\cdot L^{(i)}$
that is $\GL_2(\R)$-equivalent to $x$. Suppose $g\in\GL_2(\R)$
satisfies $g\cdot x_L=x$.  Then for an element $g'\in\GL_2(\R)$, the
element $g'\cdot x_L\in V_\Z$ is $\GL_2(\Z)$-equivalent to~$x$ if and
only if $g'=\gamma g g_0$ for some $\gamma\in \GL_2(\Z)$ and
$g_0\in\Stab_{\GL_2(\R)}(x_L)$, i.e., if and only if
$g$ and~$g'$ map to the same element in the double coset
space $$\GL_2(\Z)\backslash\GL_2(\R)/\Stab_{\GL_2(\R)}(x_L).$$ The
number of such double cosets in the single right coset $\GL_2(\Z)g$ is
equal to
\begin{equation}
\frac{\#[g\Stab_{\GL_2(\R)}(x_L)g^{-1}]}{\#[\GL_2(\Z)\cap g\Stab_{\GL_2(\R)}(x_L)g^{-1}]}=
\frac{\#\Stab_{\GL_2(\R)}(x)}{\#\Stab_{\GL_2(\Z)}(x)}=m(x)
\end{equation}
as desired.

Since the stabilizer in $\GL_2(\Z)$ of an element $x\in V_\R$ always
contains the identity and its negative, $m(x)$ is always a number
between 1 and $n_i$.  In fact, for almost all $x\in V^{(i)}_\R$, the
quantity $m(x)$ is equal to~$n_i$.  Indeed, for any fixed
$\gamma\in\GL_2(\Z)$ not equal to plus or minus the identity, the set
of elements in $V_\R$ that are fixed by $\gamma$ has measure
$0$. Since $\GL_2(\Z)$ is countable, it follows that the set of
elements $x\in V^{(i)}_\R$ such that $m(x)<n_i$ also has
measure~$0$. Thus for any $h\in \GL_2(\R)$, away from a measure zero
set, the multiset $\FF h\cdot L^{(i)}$ is the union of $n_i$
fundamental domains for the action of $\GL_2(\Z)$ on $V^{(i)}_\R$.

Therefore, for any $h\in \GL_2(\R)$, if we let $\RR_X(h\cdot
L^{(i)})$ denote the multiset $\{w\in \FF h\cdot
L^{(i)}:|H(w)|<X\}$, then the product $n_iN(V_\Z^{(i)};X)$ is
equal to the number of irreducible integral points in $\RR_X(
h\cdot L^{(i)})$, with the slight caveat
that the (relatively rare---see Lemma~\ref{gl2zbigstab}) points with
$\GL_2(\Z)$-stabilizers of cardinality $2r$ ($r>1$) are counted with
weight $1/r$. 

As mentioned earlier, the main obstacle to counting integral points in
this region $\RR_X( h\cdot L^{(i)})$ is that it is not bounded,
but rather has a cusp going off to infinity (namely, the part of
$\RR_X( h\cdot L^{(i)})$ where the first
coordinate $a$ becomes small in absolute value, or equivalently, where the
parameter $t$ in (\ref{nak}) becomes large).  
We simplify the counting in this cuspidal region by  
``thickening'' the cusp; more precisely, we compute the number of
integral points in the region $\RR_X( h\cdot L^{(i)})$
by averaging over a ``compact continuum'' of such fundamental regions,
i.e., by averaging over the domains $\RR_X( h\cdot L^{(i)})$ where $h$
ranges over a certain compact subset $G_0\subset \GL_2(\R)$.  This
refinement of the method of \cite{dodpf} is described in more detail
in \S\ref{avgsec}.

However, we first turn in \S\ref{redsec} to bounding the
number of reducible points in the main bodies (i.e., away from the
cusps) of our fundamental regions.

\subsection{Estimates on reducibility}\label{redsec}

We consider the integral elements in the multiset $\RR_X(h\cdot
L^{(i)}):=\{w\in \FF h\cdot L^{(i)}:|H(w)|<X\}$ that are reducible
over $\Q$, where $h$ is any element in a fixed compact subset $G_0$ of
$\GL_2(\R)$.  Note that if a binary quartic form
$ax^4+bx^3y+cx^2y^2+dxy^3+ey^4$ satisfies $a=0$ (so that, in
particular, it lies in the cusp of the region $\RR_X(h\cdot
L^{(i)})$), then it is automatically reducible over $\Q$, since $y$
is a factor.  The following lemma shows that for integral binary
quartic forms in $\RR_X(h\cdot L^{(i)})$, reducibility with $a\neq0$
does not occur very often
(i.e., there are a negligible number of reducible points
in the main body of the fundamental domain):

\begin{lemma}\label{reducible}
  Let $h\in G_0$ be any element, where $G_0$ is any fixed compact
  subset of $\GL_2(\R)$.  Then the number of integral binary quartic
  forms $ax^4+bx^3y+cy^2+dxy^3+ey^4\in \RR_X(h\cdot L^{(i)})$ that are
  reducible over $\Q$ with $a\neq 0$ is $O(X^{2/3+\epsilon})$, where
  the implied constant depends only on $G_0$ and $\epsilon$.
\end{lemma}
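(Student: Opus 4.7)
The plan is to derive coefficient bounds from the reduction theory set up in \S\ref{redthbq}, and then to count reducible forms by parametrizing their rational factorizations and bounding the integer lattice points on the resulting hyperplane slices.

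Decomposing $f=n(u)\alpha(t)k\lambda hf_0$ with $f_0\in L_V^{(i)}$, the torus element $\alpha(t)=\mathrm{diag}(t^{-1},t)$ scales the coefficient $a_j$ of $x^{4-j}y^j$ by $t^{2j-4}$, the scalar $\lambda$ scales every coefficient by $\lambda^4$, and $n(u),k,h,f_0$ contribute factors bounded uniformly for $h\in G_0$; hence $|a_j(f)|\ll_{G_0}\lambda^4 t^{2j-4}$. Since $H(f)=\lambda^{24}H(f_0)$ with $H(f_0)\asymp 1$ on the compact set $L_V^{(i)}$, the bound $H(f)<X$ forces $\lambda\ll X^{1/24}$, whence $|a|\ll X^{1/6}$ and $|e|\ll X^{1/3}$. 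The assumption that $a$ is a nonzero integer means $|a|\geq 1$ and forces $t\ll\lambda$.

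Over $\Q$, a reducible quartic factors either (i) as a linear form times a cubic, or (ii) as a product of two $\Q$-irreducible quadratic forms. For Case (i), $f$ has a rational root $(p:q)$ with $\gcd(p,q)=1$; since $a\neq 0$ rules out the root $(1:0)$, we may take $q\geq 1$, and the rational root theorem gives $q\mid a$, $p\mid e$, so $1\leq q\ll X^{1/6}$ and $|p|\ll X^{1/3}$. For each such coprime pair $(p,q)$, the condition $f(p,q)=0$ cuts out a hyperplane $H_{p,q}\subset V_\R$ with integer primitive normal $(p^4,p^3q,p^2q^2,pq^3,q^4)$ of Euclidean norm $\asymp\max(|p|,|q|)^4$. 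A standard lattice-point estimate bounds the number of integer $f\in H_{p,q}\cap\FF hL_V^{(i)}\cap\{H<X\}$ by $\Vol_4(H_{p,q}\cap\FF hL_V^{(i)}\cap\{H<X\})/\max(|p|,|q|)^4+O(1)$. A scaling argument, using that $H$ is homogeneous of degree $6$ in the coefficients and that $H_{p,q}$ is a hyperplane through the origin, gives $\Vol_4\ll X^{2/3}$ uniformly in $(p,q)$; summing then via $\sum_{\gcd(p,q)=1}\max(|p|,|q|)^{-4}=O(1)$ yields the contribution $O(X^{2/3+\epsilon})$ for Case (i).

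Case (ii) is handled analogously: writing $f=Q_1Q_2$ with primitive integer quadratic forms $Q_j$, the condition for $f$ to factor over $\Q$ as two quadratics is equivalent to the cubic resolvent of $f$ having a rational root---a codimension-$1$ arithmetic condition---and a parallel slicing argument gives the same $O(X^{2/3+\epsilon})$ bound.

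The main technical obstacle is the uniform $4$-volume estimate $\Vol_4(H_{p,q}\cap\FF hL_V^{(i)}\cap\{H<X\})\ll X^{2/3}$: because the fundamental region is strongly anisotropic in the cusp, with the bounds $|a_j|\ll\lambda^4 t^{2j-4}$ diverging at different rates as $t\to\infty$, showing that this $4$-volume remains bounded by a constant multiple of $X^{2/3}$ uniformly as the slope $p/q$ varies (including edge cases such as $(p,q)=(0,1)$ aligned with a cusp direction) is the delicate point; one must also verify that the lower-order boundary term from the lattice-point estimate, summed over the $O(X^{1/2})$ admissible coprime pairs, is subsumed by $X^{2/3+\epsilon}$.
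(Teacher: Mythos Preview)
Your Case~(ii) argument has a genuine gap. The condition that the cubic resolvent of $f$ has a rational root $r$ reads
\[
r^3 + c\,r^2 + (bd-4ae)\,r + (ad^2+b^2e-4ace) = 0,
\]
which for fixed $r$ is a \emph{cubic} hypersurface in the coefficients $(a,b,c,d,e)$, not a hyperplane. So the ``parallel slicing argument'' does not apply, and lattice-point counting on such nonlinear slices is a different (and harder) problem. Your Case~(i) is closer to workable, but you have correctly identified and then not resolved the real difficulty: the region $\RR_X(hL_V^{(i)})$ has a cusp in which $|e|$ can grow like $X^{1/3}$ while $|a|$ shrinks, so the uniform bound $\Vol_4(H_{p,q}\cap\RR_X)\ll X^{2/3}$ is not obvious---and the claimed $O(1)$ error in the lattice-point count on each hyperplane is not a ``standard'' estimate for a non-convex, cuspidal region (Davenport's lemma gives errors of the size of the largest projection, not $O(1)$).

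The paper's proof sidesteps all of this with an elementary device you are missing: rather than bounding individual coefficients, it uses the \emph{product} bounds $ae,\,ad,\,bd=O(X^{1/3})$ (along with $a,b,c=O(X^{1/6})$), which hold throughout the cusp precisely because when one end coefficient is large the other is forced to be small. With $a\neq 0$ and $e\neq 0$ there are then only $O(X^{2/3+\epsilon})$ quadruples $(a,b,d,e)$; for a linear factor $px+qy$ one has $p\mid a$, $q\mid e$, giving $O(X^\epsilon)$ choices by the divisor bound, and $f(-q,p)=0$ then pins down $c$. For the two-quadratic case one writes $f=(px^2+qxy+ry^2)(\tfrac{a}{p}x^2+sxy+\tfrac{e}{r}y^2)$, uses $ae=O(X^{1/3})$ and divisor bounds to control $(a,e,p,r)$, and solves a $2\times 2$ linear system in $(q,s)$ from the $x^3y$- and $xy^3$-coefficients; this again gives $O(X^{2/3+\epsilon})$. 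Both cases are handled by divisor-function estimates and linear algebra, with no geometry-of-numbers on hyperplane slices required.
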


%\begin{remark}
%{\em Although we will not need this here, one can show that the number of
%(reducible) integral binary quartic forms in $\RR_X(h\cdot L^{(i)})$
%satisfying $a=0$ is asymptotically equal to $cX^{5/6}$.  Thus as
%$X\to\infty$, a proportion of $100\%$ of the reducible lattice points in 
%the fundamental domain $\RR_X(h\cdot L^{(i)})$ lie in the cusp.}
%\end{remark}

\begin{proof}
  Let $f(x,y)=ax^4+bx^3y+cx^2y^2+dxy^3+ey^4$ be any element in
  $\RR_X(h\cdot L^{(i)})$. 
  We know that $\RR_X(h\cdot L^{(i)})\subset N'A'K\Lambda h\cdot L^{(i)}$,
  where $h\cdot L^{(i)}$ lies in a fixed compact
  set and $0<\lambda<X^{1/24}$.
Since all the coefficients of all the elements in
  $K\Lambda h\cdot L^{(i)}$ are bounded by $O((X^{1/24})^4)=O(X^{1/6})$,
  it follows that in  $N'A'K\Lambda h\cdot L^{(i)}$, we still
  have $a=O(X^{1/6})$, $b=O(X^{1/6})$, $c=O(X^{1/6})$,
  $ad=O(X^{2/6})$, $bd=O(X^{2/6})$, and $ae=O(X^{2/6})$.
In particular, the latter estimates clearly imply that the number of points in
  $\RR_X(h\cdot L^{(i)})$ with $a\neq 0$ and $e=0$ is $O(X^{4/6+\epsilon})$.

  Let us now assume that $a \neq 0$ and $e \neq 0$.  We first estimate
  the number of forms that have a rational linear factor.  The above
  estimates show that the number of possibilities for the quadruple
  $(a,b,d,e)$ is at most $O(X^{4/6+\epsilon})$.  If $px+qy$ is a
  linear factor of $f(x,y)$, where $p,q\in\Z$ are relatively prime,
  then $p$ must be a factor of $a$, while $q$ must be a factor of $e$;
  they are thus both determined up to $O(X^\epsilon)$ possibilities.
  Once $p$ and $q$ are determined, computing $f(-q,p)$ and setting it
  equal to zero then uniquely determines $c$ (if it is an integer at
  all) in terms of $a,b,d,e,p,q$.  Thus the total number of forms
  $f\in \RR_X(h\cdot L^{(i)})$ having a rational linear factor and
  $a\neq 0$ is $O(X^{4/6+\epsilon})$.

  We now estimate the number of binary quartic forms in
  $\RR_X(h\cdot L^{(i)})$ that factor into two irreducible binary
  quadratic forms over $\Z$, say
  $$ax^4+bx^3y+cx^2y^2+dxy^3+ey^4=(px^2+qxy+ry^2)
  \Bigl(\frac{a}{p}x^2+sxy+\frac{e}{r}y^2\Bigr)$$ where $p,q,r,s\in\Z$ and
  $p,q,r$ are relatively prime.  Since $ae=O(X^{2/6})$ and
  $a,e\neq 0$, the number of possibilities for the pair $(a,e)$ is
  $O(X^{2/6+\epsilon})$.  We then see that $p$ divides $a$ and $r$
  divides $e$, and hence the number of possibilities for $(p,r)$, once
  $a$ and $e$ have been fixed, is bounded by $O(X^\epsilon)$.

Next, equating coefficients, we see that:
\begin{equation}
\begin{array}{rcl}
\label{eq11}\displaystyle{\frac{a}{p}\:\!q\,+\,p\:\!s}&\!\!=\!\!&b,\\[.15in]
\displaystyle{\frac{e}{r}\:\!q\,+\,r\:\!s}&\!\!=\!\!&d.
\end{array}
\end{equation}

We split into two cases.  We first consider the case where
$\frac{ar}{pe}\neq \frac{p}{r}$, i.e., the linear system (\ref{eq11})
in the variables $q$ and $s$ is nonsingular.  Then the values of $b$
and $d$ uniquely determine $q$ and $s$, and so the total number of
quadruples $(a,b,d,e)$---and hence the total number of octuples $(a,b,d,e,p,r,q,s)$---is
at most $O(X^{4/6+\epsilon})$.  Furthermore, once this octuple has
been fixed, this also then determines $c$ by equating coefficients of
$x^2y^2$.  Hence there are at most $O(X^{4/6+\epsilon})$ possibilities
for $(a,b,c,d,e)$ in this case.

Next, we consider the case where $\frac{ar}{pe}=\frac{p}{r}$, so that
the system (\ref{eq11}) is singular.  In this case, the value of $b$
determines the value of $d$ uniquely, namely $d=(r/p)b$.  We have
already seen that there are $O(X^{2/6+\epsilon})$ possibilities for
the quadruple $(a,e,p,r)$.  Since there are only $O(X^{1/6})$ choices
for each of $b$ and $c$, and then $d$ is determined by $b$, the total
number of choices for $(a,b,c,d,e)$ is again $O(X^{4/6+\epsilon})$, as
desired.
\end{proof}

We also have the following lemma which bounds the number of
$\GL_2(\Z)$-equivalence classes of integral binary quartic forms having
large stabilizers inside $\GL_2(\Z)$ (in fact, in $\GL_2(\Q)$); we 
defer the proof to \S\ref{secaux}.

\begin{lemma}\label{gl2zbigstab}
  The number of $\GL_2(\Z)$-orbits of integral binary quartic forms
  $f\in V_\Z$ such that $\Delta(f)\neq 0$ and $H(f)<X$ whose stabilizer in $\GL_2(\Q)$ has size greater than
  $2$ is $O(X^{3/4+\epsilon})$.
\end{lemma}

\subsection{Averaging and cutting off the cusp}\label{avgsec}

Let $G_0$ be a compact, semialgebraic, left $K$-invariant set in
$\GL_2(\R)$ that is the closure of a nonempty open set and in which
every element has determinant greater than or equal to $1$. Then for
$i=0$, $1$, $2+$, and $2-$, we may write

\begin{equation}\label{equationfirst}
N(V_\Z^{(i)};X)=\frac{\int_{h\in G_0}\#\{x\in \FF h\cdot L\cap V_\Z^{\irr}:
  H(x)<X\}dh\;}{n_i\int_{h\in G_0}dh},
\end{equation}
where $V_\Z^\irr$ denotes the set of irreducible elements in
$V_\Z$, the set $L$ is equal to $L^{(i)}$, and $dh$ denotes
Haar-measure on $\GL_2(\R)$. We normalize $dh$ as follows: if we write
$h\in\GL_2(\R)$ in its Iwasawa decomposition as
$h=n(u)\alpha(t)k\lambda$, then $dh=t^{-2}du\,d^\times
t\,dk\,d^\times\lambda$, where $d^\times t=t^{-1}dt$, \;
$d^\times\lambda=\lambda^{-1}d\lambda$, \,and $\int_K dk=1$.
Thus, the denominator of the
right hand side of (\ref{equationfirst}) is an absolute constant
$C_{G_0}^{(i)}$ greater than zero.

More generally, for any $\GL_2(\Z)$-invariant subset $S \subset
V_\Z^{(i)}$, let $N(S;X)$ denote the number of irreducible
$\GL_2(\Z)$-orbits in $S$ having height less than $X$. Let $S^{\irr}$
denote the subset of irreducible points of $S$. Then $N(S;X)$ can be
similarly expressed as

\begin{equation}\label{eq9}
N(S;X)=\frac{\int_{h\in G_0}\#\{x\in \FF h\cdot L\cap S^{\irr}: H(x)<X\}dh\;}{C_{G_0}^{(i)}}.
\end{equation}
We use (\ref{eq9}) to define $N(S;X)$ even for
sets $S\subset V_\Z$ that are not necessarily $\GL_2(\Z)$-invariant.

Now, given $x\in V_\R^{(i)}$, let $x_L$ denote the {unique} point in $L$
that is $\GL_2(\R)$-equivalent to $x$. We have

\begin{equation}
N(S;X)=\frac{1}{C_{G_0}^{(i)}}\sum_{\substack{{x\in
      S^{\irr}}\\[.02in]{H(x)<X}}}\int_{h\in G_0} \#\{g \in \FF :
x=gh\cdot x_L\} dh.
\end{equation}
For a given $x\in S^{\irr}$, there exist a finite number of elements
$g_1,\ldots,g_n\in\GL_2(\R)$ satisfying $g_j\cdot x_L=x$.  We then have
$$\int_{h\in G_0} \#\{g \in \FF :x=gh\cdot x_L\} dh=\displaystyle\sum_j\int_{h\in G_0} \#\{g \in \FF :gh=g_j\} dh=\displaystyle\sum_j\int_{h\in G_0\cap\FF^{-1}g_j}dh.$$
As $dh$ is an invariant measure on $G$, we have
$$\displaystyle\sum_j\int_{h\in G_0\cap\FF^{-1}g_j}\!\!\!\!\!dh=\displaystyle\sum_j\int_{g\in G_0g_j^{-1}\cap\FF^{-1}}\!\!\!\!\!dg=\displaystyle\sum_j\int_{g\in\FF}\#\{h \in G_0 :gh=g_j\} dg=\int_{g\in \FF} \#\{h \in G_0 :x=gh\cdot x_L\} dg.$$
Therefore,
\begin{eqnarray}\label{eqavg}
N(S;X)&=&\frac{1}{C_{G_0}^{(i)}}\,\sum_{\substack{x\in S^{\irr}\\[.02in]
    H(x)<X}} \int_{g\in\FF} \#\{h \in G_0 : x=gh\cdot x_L\}dg\\
&\!\!=\!\!&  \frac1{C_{G_0}^{(i)}}\int_{g\in\FF}
\#\{x\in S^\irr\cap gG_0\cdot L:H(x)<X\}\,dg\\[.075in] 
&\!\!=\!\!&  \frac1{C_{G_0}^{(i)}}\int_{g\in N'(t)A'\Lambda K}
\#\{x\in S^\irr\cap n \bigl(\begin{smallmatrix}t^{-1}& {}\\
  {}& t\end{smallmatrix}\bigr) \lambda k G_0\cdot L:H(x)<X\}t^{-2} 
dn\, d^\times t\,d^\times \lambda\, dk\,.
\end{eqnarray}
Since $KG_0=G_0$ and $\int_K dk =1 $, we obtain the following theorem which provides a key formula
for $N(S,X)$:
\begin{theorem}
For any subset $S\subset V_\Z^{(i)}$, we have
\begin{equation}\label{avg}
N(S;X) = \frac1{C_{G_0}^{(i)}}\int_{g\in N'(t)A'\Lambda}                              
\#\{x\in S^\irr\cap B(n,t,\lambda,X)\}t^{-2}
dn\, d^\times t\,d^\times \lambda\,,
\end{equation}
where $C_{G_0}^{(i)}=n_i\int_{h\in G_0}dh$ and \begin{equation}\label{Bdef}B(n,t,\lambda,X) := n \bigl(\begin{smallmatrix}t^{-1}& {}\\ 
{}& t\end{smallmatrix}\bigr) \lambda G_0\cdot L\cap\{x\in V_\R^{(i)}:H(x)<X\}.\end{equation}
\end{theorem}

To estimate the number of lattice points in the (bounded) region $B(n,t,\lambda,X)$ defined by (\ref{Bdef}), we
have the following proposition due to
Davenport~\cite{Davenport1}. 

\begin{proposition}\label{davlem}
  Let $\mathcal R$ be a bounded, semialgebraic multiset in $\R^n$
  having maximum multiplicity $m$, and that is defined by at most $k$
  polynomial inequalities each having degree at most $\ell$.  
  Then the number of integral lattice points $($counted with
  multiplicity$)$ contained in the region $\mathcal R$ is
\[\Vol(\mathcal R)+ O(\max\{\Vol(\bar{\mathcal R}),1\}),\]
where $\Vol(\bar{\mathcal R})$ denotes the greatest $d$-dimensional 
volume of any projection of $\mathcal R$ onto a coordinate subspace
obtained by equating $n-d$ coordinates to zero, where 
$d$ takes all values from
$1$ to $n-1$.  The implied constant in the second summand depends
only on $n$, $m$, $k$, and $\ell$.
\end{proposition}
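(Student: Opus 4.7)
The plan is to prove the proposition by induction on the ambient dimension $n$, fibering at each step by the coordinate direction that the given triangular unipotent transformation leaves invariant. For the base case $n=1$, every $1\times 1$ unipotent matrix is the identity, so $\RR'=\RR$. A bounded semi-algebraic subset of $\R$ defined by at most $k$ polynomial inequalities of degree at most $\ell$ is a union of $O_{k,\ell}(1)$ intervals, and counting integer points (with multiplicity at most $m$) in each interval yields the length plus an $O(m)$ error per interval; since the only ``proper coordinate projection'' is to a point, $\Vol(\bar{\RR})$ is taken to be $0$ and the statement reduces to the claim that the error is $O(1)$.

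For the inductive step, assume the transformation $u$ is upper triangular (the lower triangular case being symmetric); then $u$ fixes the last coordinate $x_n$. For each $t\in\R$, set $\RR_t:=\RR\cap\{x_n=t\}$ and $\RR'_t:=\RR'\cap\{x_n=t\}$. The slice $\RR'_t$ is the image of $\RR_t$ under the $(n-1)\times(n-1)$ upper triangular unipotent transformation obtained by deleting the last row and column of $u$, and $\RR_t$ is still a bounded semi-algebraic multiset of multiplicity at most $m$ defined by at most $k$ polynomial inequalities of degree at most $\ell$. The projection of $\RR$ onto its $n$th coordinate is a bounded semi-algebraic subset of $\R$, so only the $O_{k,\ell}(1)$ integers $t$ lying in this projection contribute. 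For each such $t$ the induction hypothesis gives
\[
\#(\RR'_t\cap\Z^{n-1}) \;=\; \Vol_{n-1}(\RR_t) \,+\, O\bigl(\max\{\Vol_{n-1}(\overline{\RR_t}),1\}\bigr).
\]

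Summing over integers $t$ in the last-coordinate projection yields
\[
\#(\RR'\cap\Z^n) \;=\; \sum_{t\in\Z} \Vol_{n-1}(\RR_t) \,+\, O\Bigl(\sum_{t\in\Z} \max\{\Vol_{n-1}(\overline{\RR_t}),1\}\Bigr).
\]
The main sum is a Riemann sum for $\int \Vol_{n-1}(\RR_t)\,dt = \Vol(\RR)=\Vol(\RR')$, since unipotent transformations have determinant $1$. The function $t\mapsto\Vol_{n-1}(\RR_t)$ is piecewise semi-algebraic with $O_{k,\ell}(1)$ pieces, so the discrepancy between the Riemann sum and the integral is bounded by $O(\max_t\Vol_{n-1}(\RR_t))$, which equals the volume of a projection of $\RR$ onto a coordinate hyperplane and is therefore $O(\Vol(\bar\RR))$. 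The error sum splits according to the choice of proper coordinate projection in $\R^{n-1}$: for each such choice, the sum of the corresponding $\Vol_{n-2}(\overline{\RR_t})$ over integer $t$ approximates an $(n-2)$-dimensional projection volume of $\RR$, which is again dominated by $\Vol(\bar\RR)$. Assembling these estimates produces the stated inequality.

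The principal obstacle is the bookkeeping of the error term: one must verify that the various projection volumes of slices $\overline{\RR_t}$ that arise in the induction are each controlled by a coordinate projection volume of $\RR$ itself. This rests on the two standard observations that (a) a coordinate projection of a coordinate slice of $\RR$ is itself a coordinate slice of a coordinate projection of $\RR$, so integration against $t$ of such slice projection volumes yields a projection volume of $\RR$; and (b) a sum over integers of a bounded piecewise-semi-algebraic function with $O_{k,\ell}(1)$ pieces differs from its integral by $O$ of its supremum, where the implied constant depends only on $n,m,k,\ell$. A minor subtlety is that for the sum over $t$ with $\max\{\cdot,1\}$ in each term, the constant ``$1$'' contributes $O_{k,\ell}(1)$ since only $O_{k,\ell}(1)$ integer values of $t$ are in the relevant range, harmlessly absorbed into $O(\max\{\Vol(\bar\RR),1\})$.
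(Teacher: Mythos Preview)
The paper does not actually prove this proposition: it attributes the result to Davenport~\cite{Davenport1} and merely remarks that Davenport's argument adapts to the stated generality. Your inductive fibration over one coordinate is essentially Davenport's approach, so in that sense you are reconstructing the cited proof rather than giving an alternative.

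That said, your write-up contains a genuine error. You assert twice that the projection of $\RR$ onto the $x_n$-axis, being a bounded semi-algebraic subset of $\R$, contains only $O_{k,\ell}(1)$ integers. This is false: such a projection is a union of $O_{k,\ell}(1)$ \emph{intervals}, but each interval may contain arbitrarily many integers. What is true is that the number of integers $t$ in this projection is at most its total length plus $O_{k,\ell}(1)$, and the length is a $1$-dimensional coordinate projection volume of $\RR$, hence bounded by $\Vol(\bar\RR)$. Thus the contribution of the ``$1$'' in each $\max\{\cdot,1\}$ term is $O(\Vol(\bar\RR)+1)$, which is exactly what you need; but your stated reason for this bound is incorrect and should be replaced by the argument just given.

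There is also a smaller slip: when you slice $\RR'$ at $x_n=t$ and view the result in $\R^{n-1}$, the induced map on $\R^{n-1}$ is the upper-left $(n-1)\times(n-1)$ block of $u$ \emph{composed with a translation} by $(u_{1n}t,\ldots,u_{n-1,n}t)$, which need not be an integer vector. Since the inductive bound depends only on volumes and projection volumes (both translation-invariant), you can absorb this translation into the ``base'' region before invoking the induction hypothesis; but as written, the claim that $\RR'_t$ is the image of $\RR_t$ under a triangular unipotent transformation alone is not quite right.
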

Davenport states the above proposition only for the number of lattice
points in compact semialgebraic sets $\mathcal
R\subset\R^n$. However, his result immediately implies
Proposition~\ref{davlem} for a general bounded semialgebraic multiset
$\mathcal R\subset\R^n$, via partitioning the multiset $\mathcal R$
into semialgebraic sets having constant multiplicity and then applying
the result to the closure and boundary of each such set.

By our construction of the $L^{(i)}$, the coefficients of the binary
quartic forms in $G_0\cdot L$ are all uniformly bounded.
Let $C$ be a constant such that $C^4$ bounds the absolute values of all the
coefficients of all the forms in $G_0\cdot L$.
We then have the following lemma on the number of lattice
points in $B(n,t,\lambda,X)$ having nonzero leading coefficient:

\begin{proposition}\label{aneq0}
The number of lattice points $(a,b,c,d,e)$ in $B(n,t,\lambda,X)$ with $a\neq0$ is
$$\left\{
\begin{array}{cl}
0 & \mbox{{\em if} ${C\lambda}<t$};\\[.1in]
\Vol(B(n,t,\lambda,X)) + O(t^4\lambda^{16})&\mbox{{\em otherwise.}}
\end{array}\right.$$
\end{proposition}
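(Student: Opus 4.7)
The plan is to apply Proposition~\ref{davlem} (Davenport) to the region $B(n,t,\lambda,X)$. First I would write $B(n,t,\lambda,X)=nB_0$, where $B_0:=\bigl(\begin{smallmatrix}t^{-1}& 0\\ 0& t\end{smallmatrix}\bigr)\lambda G_0 L\cap\{x\in V_\R^{(i)}:H(x)<X\}$ is bounded and semi-algebraic, and $n=\bigl(\begin{smallmatrix}1& 0\\ u& 1\end{smallmatrix}\bigr)$ acts on the coefficients $(a,b,c,d,e)$ by a unipotent lower-triangular transformation (expanding $f(x+uy,y)$ shows $a\mapsto a$, $b\mapsto 4au+b$, and so on). Since $n$ acts on the $5$-dimensional representation $V_\R$ with determinant $(\det n)^{10}=1$, one also has $\Vol(B(n,t,\lambda,X))=\Vol(B_0)$, which matches the main term that Davenport's estimate will produce.

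Next I would read off coordinate-wise size bounds on $B_0$ from the scaling. The diagonal matrix $\bigl(\begin{smallmatrix}t^{-1}& 0\\ 0& t\end{smallmatrix}\bigr)$ scales the coefficients $(a,b,c,d,e)$ by $(t^{-4},t^{-2},1,t^2,t^4)$, while the central scalar $\lambda$ multiplies all coefficients by $\lambda^4$. Combined with the hypothesis $|a_0|\leq C$ for every $(a_0,b_0,c_0,d_0,e_0)\in G_0L$, this yields the bounds $|a|\leq C\lambda^4/t^4$, $|b|=O(\lambda^4/t^2)$, $|c|=O(\lambda^4)$, $|d|=O(\lambda^4 t^2)$, and $|e|=O(\lambda^4 t^4)$ throughout $B_0$. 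Because $n$ preserves the $a$-coordinate, the bound on $|a|$ transfers to $B(n,t,\lambda,X)$. Hence if $C\lambda/t<1$ (so that $C\lambda^4/t^4<1/C^3\leq 1$, assuming $C\geq 1$), then $|a|<1$ throughout the region and no lattice point there can have $a\neq 0$; this handles the first case.

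In the complementary regime $t\leq C\lambda$, Proposition~\ref{davlem} gives the count as $\Vol(B_0)+O(\max\{\Vol(\bar{B_0}),1\})$, where $\Vol(\bar{B_0})$ is the largest $d$-dimensional volume of any coordinate projection of $B_0$ for $1\leq d\leq 4$. Among the five $4$-dimensional projections, dropping each of $a,b,c,d,e$ in turn yields box-volume estimates $C^4\lambda^{16}t^4$, $C^4\lambda^{16}t^2$, $C^4\lambda^{16}$, $C^4\lambda^{16}/t^2$, and $C^4\lambda^{16}/t^4$ respectively; the largest is obtained by projecting away the $a$-coordinate, giving exactly the target bound $O(C^4 t^4\lambda^{16})$.

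The last step is the bookkeeping to check that every lower-dimensional ($d\leq 3$) coordinate projection of $B_0$ is also $O(\max\{C^4t^4\lambda^{16},1\})$, so that it too is absorbed into the stated error term. Taking the $d$ largest of the side lengths listed above gives, for each $d$, a bound of the form $C^d\lambda^{4d}t^{\alpha(d)}$ up to constants; under the constraint $t\leq C\lambda$ forced by the nonzero-$a$ condition, one sees that each such quantity is bounded by a constant multiple of $C^4t^4\lambda^{16}$ whenever $\lambda$ exceeds an absolute constant, and is $O(1)$ otherwise (in that case $\lambda$ and $t$ are both bounded, and hence $B_0$ lies in a set of bounded volume). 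This case-by-case comparison across the $(t,\lambda)$ regimes is the only (mild) technical point of the proof; everything else is a direct application of Proposition~\ref{davlem} together with the scaling analysis above.
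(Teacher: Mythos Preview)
Your proposal is correct and follows essentially the same approach as the paper: apply Davenport's Proposition~\ref{davlem} to $B_0$ (the unipotent translate of $B(n,t,\lambda,X)$), read off the coordinate ranges from the diagonal scaling $(a,b,c,d,e)\mapsto(\lambda^4 t^{-4}a,\ldots,\lambda^4 t^4 e)$, and identify the projection onto $a=0$ as the dominant error term. The paper phrases the bookkeeping for the lower-dimensional projections slightly differently---it notes directly that in the regime $C\lambda/t\geq 1$ one has $t\geq\sqrt3/2$ (from $A'$) and $\lambda\geq(\sqrt3/2)/C$, so that \emph{every} coordinate projection is $O(C^4t^4\lambda^{16})$---whereas you split into $\lambda$ large versus $\lambda$ bounded; these are equivalent. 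Your remark that one must take $C\geq1$ to deduce $C(\lambda/t)^4<1$ from $C\lambda/t<1$ is well spotted (and harmless, since $C$ is only an upper bound).
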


\begin{proof}
  If $ax^4+bx^3y+cx^2y^2+dxy^3+ey^4\in B(n,t,\lambda,X)$ is a binary
  quartic form, then $|a|$, $|b|$, $|c|$, $|d|$, and $|e|$ are at
  most $C^4\lambda^4/t^4$, $C^4\lambda^4/t^2$, $C^4\lambda^4$,
  $C^4\lambda^4t^2$, and $C^4\lambda^4t^4$, respectively.  If ${C
    {\lambda/ t}<1}$, then $a=0$ is the only possibility for such an
  integral binary quartic form. 

  Now assume $C\lambda/t\geq 1$.  This implies that $\lambda$, like
  $t$, is bounded below by a positive constant. Then each of 
the upper limits $C^4\lambda^4/t^4$, $C^4\lambda^4/t^2$, $C^4\lambda^4$,
  $C^4\lambda^4t^2$, and $C^4\lambda^4t^4$ for $|a|$,  $|b|$, $|c|$, $|d|$, and $|e|$, respectively, are also
bounded below by a positive constant, and the upper limit for $|a|$ is the smallest
of these upper limits up to a bounded constant.  Therefore, the
$k$-dimensional volume of any projection 
of $B(n,t,\lambda,X)$ onto a subspace defined by setting $k$
coefficients equal to 0 (where $1\leq k\leq 4$) is at most a bounded
constant times the product of the last four upper limits, or
  $O(\lambda^4/t^2\cdot\lambda^4\cdot\lambda^4t^2\cdot\lambda^4t^4)=O(t^4\lambda^{16})$.
  The result now follows from Proposition \ref{davlem}.
\end{proof}

In $(\ref{avg})$, since $L$ (and therefore also $G_0\cdot L$) contains only
points with height at least $1$, we observe (by the definition of
$B(n,t,\lambda,X)$) that the integrand will be nonzero only if
$t\leq C\lambda$ and $\lambda<X^{1/24}$.  Thus we may write 
\begin{equation}\label{eq15}
  N(V^{(i)}_\Z;X)=\frac1{C_{G_0}^{(i)}}
  \int_{\lambda=\sqrt[4]3/(\sqrt2C)}^{X^{1/24}}
  \int_{t=\sqrt[4]3/\sqrt2}^{C\lambda} \int_{N'(t)}
  (\Vol(B(n,t,\lambda,X)) +
  O({t^4}{\lambda^{16}})) t^{-2}
  dn\,d^\times t\,d^\times \lambda+O(X^{3/4+\epsilon}),
\end{equation}
where the error term of $O(X^{3/4+\epsilon})$ arises due to the bound
on reducible forms in Lemma~\ref{reducible} and the bound on forms
having nontrivial $\GL_2(\Z)$-stabilizer in Lemma~\ref{gl2zbigstab}.
The integral of the second summand is immediately evaluated to be
$O(X^{3/4})$.  Meanwhile, the integral of the first summand is
\begin{equation}\label{eq16}
\frac1{C_{G_0}^{(i)}}\int_{h\in G_0}{\rm Vol}
(\mathcal R_X(h\cdot L)){dh} -\int_{\lambda=\sqrt[4]3/(\sqrt2C)}^{X^{1/24}}
\int_{t=C\lambda}^{\infty}\int_{N'(t)}
\Vol(B(n,t,\lambda,X))t^{-2} 
dn \,d^\times t \,d^\times \lambda.
\end{equation}
However, ${\rm Vol}(\mathcal R_X(h\cdot L))$ is independent of $h$;
also, since ${\rm Vol}(B(n,t,\lambda,X))=O(\lambda^{20})$, by carrying
out the integration in the second term of (\ref{eq16}), we see that
that this term is also $O(X^{3/4})$. In other words, the volume of the
cuspidal region, where $t>C\lambda$, is small. We conclude that
\begin{equation}\label{bigint}
N(V^{(i)}_\Z;X)={\rm Vol}(\mathcal R_X(L))/n_i+O(X^{3/4+\epsilon}).
\end{equation}
To complete the proof of Theorem \ref{refbq}, it thus remains only to
compute the volume ${\rm Vol}(\mathcal R_X(L))$.

\subsection{Computation of the volume}\label{secvol}

Let $i$ be equal to $0$, $1$, $2+$, or $2-$.  Our aim in this subsection is to
compute the volume of $\mathcal R_X(L^{(i)})=\{w\in \FF h\cdot L^{(i)}:|H(w)|<X\}$.  To this end, let
$R^{(i)}:=\Lambda\cdot L^{(i)}$. Then for each $(I,J)\in
\R\times\R$ with $\Delta(I,J)>0$, the sets $R^{(0)}$, $R^{(2+)}$,
and $R^{(2-)}$ contain exactly one point having invariants $I$ and
$J$; for each $(I,J)\in\R\times\R$ with $\Delta(I,J)<0$, the set
$R^{(1)}$ contains exactly one point having invariants $I$ and $J$.
Let $R^{(i)}(X)$ denote the set of
all those points in $R^{(i)}$ having height less than $X$. 
We now consider a twisted action of $\GL_2(\R)$ on $V_\R$ given by
\begin{equation}
\gamma\cdot
f(x,y):=f((x,y)\cdot\gamma)/(\det \gamma)^2
\end{equation}
for $\gamma\in\GL_2(\R)$ and $f\in V_\R$, which induces an action of
$\PGL_2(\R)$ on $V_\R$.  
Let $\FF_{\PGL_2}$ be the
image in $\PGL_2(\R)$ of the fundamental domain $\FF$ for the action
of $\GL_2(\Z)$ on $\GL_2(\R)$. Then $\FF_{\PGL_2}$ is a fundamental
domain for the action of $\PGL_2(\Z)$ on $\PGL_2(\R)$ by left multiplication.
Furthermore, we have $\mathcal R_X(L^{(i)})=\FF_{\PGL_2}\cdot R^{(i)}(X)$.

The set $R^{(i)}$ is in canonical one-to-one correspondence with the
set $\{(I,J)\in \R\times\R:I^3-J^2/4>0\}$ if $i=0$, $2+$, or $2-$, and
with $\{(I,J)\in \R\times\R:I^3-J^2/4<0\}$ if $i=1$. There is thus a
natural measure on each of these sets $R^{(i)}$, given by
$dr=dI\,dJ$. Let $\omega$ be a differential which generates the rank
$1$ module of top-degree differentials of $\PGL_2$ over $\Z$. Then
$\omega$ is well-defined up to sign. 
To compute the volume of the multiset $\mathcal R_X(L^{(i)})=\FF_{\PGL_2}\cdot R^{(i)}(X)$, we use the following proposition:
\begin{proposition}\label{bqjac}
For any measurable function $\phi$
  on $V_\R$, we have
\begin{equation}\label{Jac}
\int_{\FF_{\PGL_2}\cdot R^{(i)}}\phi(v)dv=\frac{1}{27}\int_{R^{(i)}}
\int_{\PGL_2(\R)}\phi(g\cdot p^{(i)}_{I,J})\,\omega(g)\,dI dJ,
\end{equation}
where $p^{(i)}_{I,J}\in R^{(i)}$ is the point having invariants equal to $I$ and $J$ and we regard $\FF_{\PGL_2}\cdot R^{(i)}$ as a multiset.
\end{proposition}
The proposition follows from a
Jacobian computation and can be verified directly; for a more noncomputational proof of the above proposition, see Section 3.3.

Proposition \ref{bqjac} may now be used
to compute the volume of the multiset $\mathcal R_X(L^{(i)})$; we have
\begin{equation}
\int_{\mathcal R_X(L^{(i)})}\!\!\!\!\!dv=\int_{\FF_{\PGL_2}\cdot R^{(i)}(X)}\!\!\!\!\!dv=
\frac{1}{27}\int_{R^{(i)}(X)}\int_{\FF_{\PGL_2}}dg\,dI\,dJ=\frac{2\zeta(2)}{27}\int_{R^{(i)}(X)}dI\,dJ,
\end{equation}
where the final equality follows from the fact that $\Vol(\FF_{\PGL_2})=\Vol(\PGL_2(\Z)\backslash\PGL_2(\R))=2\zeta(2)$ (see \cite{Langlands}).
When $i=0$, $2+$, or $2-$, we compute $\int_{R^{(i)}(X)}dI\,dJ$ to be
\begin{equation}\label{volrplus}
\int_{I=0}^{X^{1/3}}\int_{J=-2I^{3/2}}^{2I^{3/2}}dJdI=\frac{8}{5}X^{5/6}.
\end{equation}
Meanwhile, $\int_{R^{(1)}(X)}dI\,dJ$ is equal to
\begin{equation}\label{volrminus}
\int_{I=-X^{1/3}}^{X^{1/3}}\int_{J=-2X^{1/2}}^{2X^{1/2}}dJdI-\Vol(R^{(0)}(X))=8X^{5/6}-\frac{8}{5}X^{5/6}=\frac{32}{5}X^{5/6}.
\end{equation}
We conclude that 
\begin{equation}\label{eq22}
\Vol(\mathcal R_X(L^{(i)})) =
\left\{\begin{array}{ll}
\displaystyle{\frac{16}{135}\cdot\zeta(2)X^{5/6}}
&\qquad\mbox{ for $i=0$, $2+$, and $2-$; }\\[.1in]
\displaystyle{\frac{64}{135}\cdot\zeta(2)X^{5/6}}&\qquad\mbox{ for $i=1$. }
\end{array}\right.
\end{equation}
As $n_0=n_{2+}=n_{2-}=4$ and $n_1=2$, Equations (\ref{bigint}) and (\ref{eq22}) now
immediately imply Theorem~\ref{refbq}.

To deduce Theorem \ref{bqaverage} from Theorem \ref{refbq}, we require a
count of the number of eligible pairs $(I,J)\in\Z\times\Z$ satisfying
$H(I,J)<X$.  The next lemma follows immediately from Theorem \ref{eligible}, which we
prove in \S\ref{secaux}:
\begin{lemma}\label{conglat}
  The set of eligible $(I,J)\in\Z\times\Z$ is a union of $9$ distinct
  translates of $9\Z\times 27\Z$.
\end{lemma}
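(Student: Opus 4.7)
The plan is to deduce Lemma~\ref{conglat} directly from Theorem~\ref{eligible} by recasting the four congruence conditions (a)--(d) uniformly as residue conditions in $(\Z/9\Z)\times(\Z/27\Z)$ and then counting the number of residue classes that arise. The common refinement to work modulo $(9,27)$ is natural because conditions (b), (c), (d) already pin down $I$ modulo $9$ and $J$ modulo $27$, while condition (a) is coarser and will split.

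First, I would rewrite each condition as a set of pairs in $(\Z/9\Z)\times(\Z/27\Z)$. Condition (a) requires $I\equiv 0\pmod 3$ and $J\equiv 0\pmod{27}$; since $I\equiv 0\pmod 3$ corresponds to the three residues $\{0,3,6\}$ modulo~$9$, this contributes the $3$ pairs $(0,0)$, $(3,0)$, $(6,0)$. Conditions (b), (c), (d) each fix $I$ to a single residue modulo $9$ (namely $1,4,7$) and $J$ to two residues modulo $27$ (namely $\pm 2$, $\pm 16$, $\pm 7$), contributing $2$ pairs apiece. In total one obtains $3+2+2+2=9$ pairs in $(\Z/9\Z)\times(\Z/27\Z)$.

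Next I would check that these nine residue pairs are pairwise distinct; this is immediate from inspection since the $I$-components across the four conditions involve disjoint residue classes modulo~$9$ ($\{0,3,6\}$, $\{1\}$, $\{4\}$, $\{7\}$) and the two $J$-residues within each of conditions (b)--(d) are visibly different modulo~$27$. Each such residue pair $(I_0,J_0)$ then corresponds to the affine sublattice
\begin{equation*}
\{(I,J)\in\Z\times\Z : I\equiv I_0\pmod{9},\ J\equiv J_0\pmod{27}\},
\end{equation*}
which is a coset of $9\Z\times 27\Z$ and therefore has index $9\cdot 27=243$ in $\Z\times\Z$. The union of these nine cosets is by construction exactly the set of eligible $(I,J)$ described by Theorem~\ref{eligible}, so the lemma follows.

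There is no real technical obstacle here, since all the arithmetic content is already contained in Theorem~\ref{eligible}; the only thing to be careful about is choosing the right common modulus ($9$ for $I$ and $27$ for $J$) so that condition (a), which a priori only constrains $I$ modulo $3$, gets correctly refined into three distinct classes and is not accidentally conflated with conditions (b)--(d).
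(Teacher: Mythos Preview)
Your argument is correct and is essentially the same as the paper's. The paper's proof of this lemma in \S\ref{apppar} is devoted to establishing Theorem~\ref{eligible} itself (by identifying the eligible $(I,J)$ with the invariants of monic binary cubic forms via the cubic resolvent construction and Proposition~\ref{cco}); once those congruence conditions are in hand, the enumeration into $9$ cosets of $9\Z\times27\Z$ is exactly the elementary step you carry out explicitly and the paper leaves to the reader.
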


The following proposition is now a simple application of Proposition \ref{davlem} and Lemma \ref{conglat}.

\begin{proposition}\label{IJcount}
  Let $N^{+}_{I,J}(X)$ and $N^{-}_{I,J}(X)$ denote the number of
  eligible $(I,J)\in\Z\times\Z$ satisfying $H(I,J)<X$ that have
  positive discriminant and negative discriminant, respectively. Then
  we have
\begin{itemize}
\item[{\rm (a)}]$N^{+}_{I,J}(X)=\displaystyle\frac{8}{135}X^{5/6}+O(X^{1/2});$
\item[{\rm (b)}]$N^{-}_{I,J}(X)=\displaystyle\frac{32}{135}X^{5/6}+O(X^{1/2}).$
\end{itemize}
\end{proposition}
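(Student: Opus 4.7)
The plan is to combine the lattice decomposition of Lemma~\ref{conglat} with the volume computations already carried out in equations (\ref{volrplus}) and (\ref{volrminus}), using Davenport's lattice-point estimate (Proposition~\ref{davlem}) for the error term.

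First, I identify the relevant real regions. Set
$$R^+_X = \{(I,J) \in \R \times \R : H(I,J) < X,\; 4I^3 - J^2 > 0\}, \qquad R^-_X = \{(I,J) \in \R \times \R : H(I,J) < X,\; 4I^3 - J^2 < 0\}.$$
Both are bounded semi-algebraic sets cut out by polynomial inequalities of degree at most $6$, and each is contained in the rectangle $[-X^{1/3}, X^{1/3}] \times [-2X^{1/2}, 2X^{1/2}]$. The volume computations already performed in Section~\ref{secvol} (equations (\ref{volrplus}) and (\ref{volrminus})) yield $\Vol(R^+_X) = \tfrac{8}{5}X^{5/6}$ and $\Vol(R^-_X) = \tfrac{32}{5}X^{5/6}$.

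Next, by Lemma~\ref{conglat}, the eligible subset of $\Z \times \Z$ decomposes as a disjoint union of $9$ sublattices $L_1,\dots,L_9$, each of index $243$ in $\Z \times \Z$. For each $L_k$, choose $M_k \in M_2(\Z)$ with $|\det M_k| = 243$ and $L_k = M_k \Z^2$. Then counting $L_k$-points in $R^\pm_X$ is equivalent to counting $\Z^2$-points in the set $M_k^{-1} R^\pm_X$, which is bounded and semi-algebraic of bounded degree, and whose one-dimensional coordinate projections all have measure $O(X^{1/2})$ (dominated by the projection to the $J$-axis, which is the longer direction). Applying Proposition~\ref{davlem} gives
$$\#(L_k \cap R^\pm_X) \;=\; \frac{\Vol(R^\pm_X)}{243} + O(X^{1/2}).$$
Summing over the $9$ lattices yields $N^\pm_{I,J}(X) = \Vol(R^\pm_X)/27 + O(X^{1/2})$, which specializes to $\tfrac{8}{135}X^{5/6} + O(X^{1/2})$ and $\tfrac{32}{135}X^{5/6} + O(X^{1/2})$, as claimed.

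No real obstacle arises: the proof is a routine conversion of a volume into a lattice-point count. The one point worth verifying is that Davenport's error is $O(X^{1/2})$ rather than $O(X^{1/3})$, reflecting the elongated shape of the region near the boundary $4I^3 = J^2$; the $J$-axis projection is the dominant contribution, and the $I$-axis projection of length $2X^{1/3}$ is of lower order. One should also check that the linear changes of coordinates induced by the $M_k$ do not worsen the order of the projection bound, which is clear since the $M_k$ are absolutely bounded in operator norm (they lie in a finite set depending only on the residue classes modulo $27$ specified in Theorem~\ref{eligible}).
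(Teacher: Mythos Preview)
Your proposal is correct and follows essentially the same approach as the paper: define the regions $R_X^\pm$, invoke Lemma~\ref{conglat} to write the eligible set as nine index-$243$ translates, apply Proposition~\ref{davlem} to each, and plug in the volumes from (\ref{volrplus}) and (\ref{volrminus}) to obtain $\tfrac{9}{243}\Vol(R_X^\pm)+O(X^{1/2})$. One small terminological slip: the nine pieces are cosets of the single lattice $9\Z\times27\Z$, not sublattices, so writing $L_k=M_k\Z^2$ is not quite right for eight of them; instead write $L_k=v_k+M\Z^2$ with $M=\mathrm{diag}(9,27)$ and apply Davenport to $M^{-1}(R_X^\pm-v_k)$, which does not change your argument.
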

\begin{proof}
  Let $R_{I,J}^{\pm}(X)$ denote the sets
  $\{(i,j)\in\R^2:|i|<X^{1/3},\,|j|<2X^{1/2},\,\pm(4i^3-j^2)>0\}$. The
  sizes of the projections of $R_{I,J}^{\pm}(X)$ onto
  smaller-dimensional coordinate hyperplanes are all bounded by
  $O(X^{1/2})$. Using Proposition \ref{davlem} and Lemma~\ref{conglat}
  we then see that
  $N^{\pm}_{I,J}(X)=\frac{9}{243}\Vol(R_{I,J}^{\pm}(X))+O(X^{1/2})$.
  The volumes of $R_{I,J}^{+}(X)$ and $R_{I,J}^{-}(X)$ were computed
  in (\ref{volrplus}) and (\ref{volrminus}), respectively, and the
  proposition follows.
\end{proof}

Theorem \ref{bqaverage} now follows from Theorem \ref{refbq} and Proposition~\ref{IJcount}.

\subsection{Congruence conditions}\label{secbqcong}

In this subsection, we prove a version of Theorem \ref{refbq} where we
count integral binary quartic forms satisfying any specified finite
set of congruence conditions.

Suppose $S$ is a subset of $V_\Z$ defined by finitely many congruence
conditions. We may assume that $S\subset V_\Z$ is defined by
congruence conditions modulo some integer $m$.  Then $S$ may be viewed
as the union of (say) $k$ translates $\mathcal L_1,\ldots,\mathcal
L_k$ of the lattice $m\cdot V_\Z$. For each such lattice translate
$\mathcal L_j$, we may use formula (\ref{avg}) and the discussion
following that formula to compute $N(\mathcal L_j\cap V_\Z^{(i)};X)$,
where each $d$-dimensional volume is scaled by a factor of $1/m^d$ to
reflect the fact that our new lattice has been scaled by a factor of
$m$.  With these scalings, the maximum volume of the projections of
$B(n,t,\lambda,X)$ is seen to be at most $O(t^4\lambda^{16})$.
Analogous to Proposition~\ref{aneq0}, we see that the number of points
$(a,b,c,d,e)$ in $B(n,t,\lambda,X)\cap \mathcal L_j$ with $a\neq0$ is
$$\left\{
\begin{array}{cl}
0 & \mbox{{\rm if} $\frac{C\lambda}{t}<1$};\\[.1in]
\displaystyle\frac{1}{m^5}\Vol(B(n,t,\lambda,X)) + O({t^4}
{\lambda^{16}})&\mbox{{\rm otherwise.}}
\end{array}\right.$$

Carrying out the integral for $N(\mathcal L_j\cap V_\Z^{(i)};X)$ as in
(\ref{eq15})--(\ref{eq16}), we obtain
the following analogue of~(\ref{bigint}):
$$N(\mathcal L_j\cap V_\Z^{(i)};X) = \frac{\Vol(\RR_X(L^{(i)}))}{n_i\cdot m^5}+O(X^{3/4+\epsilon}).$$
Summing over $j$, we thus obtain
\begin{equation}\label{oo}
N(S\cap V_\Z^{(i)};X) = \frac{k\Vol(\RR_X(L^{(i)}))}{n_i\cdot m^5}
+O(X^{3/4+\epsilon}).
\end{equation}

For any set $S$ in $V_\Z$ that is
definable by congruence conditions, let us denote by $\mu_p(S)$
the $p$-adic density of the $p$-adic closure of $S$ in $V_{\Z_p}$,
where we normalize the additive measure $\mu_p$ on $V_{\Z_p}$ so that
$\mu_p(V_{\Z_p})=1$.
We then have the following theorem:
\begin{theorem}\label{cong2}
Suppose $S$ is a subset of $V_\Z$ defined by 
congruence conditions modulo finitely many prime powers. Then we have
\begin{equation}\label{ramanujan}
N(S\cap V_\Z^{(i)};X)
  = N(V_\Z^{(i)};X)
  \prod_{p} \mu_p(S)+O(X^{3/4+\epsilon}),
\end{equation}
where $\mu_p(S)$ denotes the $p$-adic density of $S$ in $V_\Z$, and
where the implied constant depends only on $S$ and $\epsilon$.
\end{theorem}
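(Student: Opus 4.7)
The plan is to read off Theorem~\ref{cong2} as a direct consequence of equation~(\ref{oo}) together with Theorem~\ref{refbq}, once we recognize the density ratio $k/m^5$ as a product of local densities via the Chinese Remainder Theorem.

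First I would set up notation. Since $S\subset V_\Z$ is defined by finitely many congruence conditions, there is an integer $m$ so that $S$ is a disjoint union of translates of the lattice $m\cdot V_\Z$, say of $k$ such translates $\mathcal L_1,\ldots,\mathcal L_k$. The bulk of the technical work---applying the averaging identity of \S\ref{avgsec}, rescaling each projection's volume by the appropriate power of $m$, and absorbing the rescaled error terms---has already been carried out and is summarized in equation~(\ref{oo}):
\[N(S\cap V_\Z^{(i)};X) \;=\; \frac{k\,\Vol(\RR_X(L_V^{(i)}))}{n_i\cdot m^5}\;+\;O(X^{3/4+\epsilon}).\]
The plan is to convert this formula into the desired shape.

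Next I would invoke equation~(\ref{bigint}), which says
\[N(V_\Z^{(i)};X) \;=\; \Vol(\RR_X(L_V^{(i)}))/n_i \;+\; O(X^{3/4+\epsilon}).\]
Substituting into (\ref{oo}) (and using that $k/m^5\leq 1$ so the error term does not worsen) immediately yields
\[N(S\cap V_\Z^{(i)};X) \;=\; \frac{k}{m^5}\,N(V_\Z^{(i)};X) \;+\; O(X^{3/4+\epsilon}).\]
It now remains only to identify the scalar $k/m^5$ with $\prod_p \mu_p(S)$.

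The final step is the density identification. Because $V_\Z\cong\Z^5$, the set $S$, being a union of $k$ residue classes modulo $m$ in $V_\Z$, has natural density $k/m^5$ in $V_\Z$. On the other hand, only the primes $p$ dividing $m$ contribute nontrivially to the local densities; for $p\nmid m$ we have $\mu_p(S)=1$, and for $p\mid m$ the density $\mu_p(S)$ depends only on the image of $S$ modulo the appropriate power of $p$ in $V_{\Z_p}$. By the Chinese Remainder Theorem applied to the factorization of $m$ into prime powers, the image of $S$ in $V_{\Z/m\Z}\cong \prod_{p\mid m} V_{\Z/p^{v_p(m)}\Z}$ decomposes as a product, and hence
\[\frac{k}{m^5} \;=\; \prod_{p\mid m}\mu_p(S) \;=\; \prod_{p}\mu_p(S),\]
the product being effectively finite. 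Combining this with the displayed identity above gives Theorem~\ref{cong2}. There is no real obstacle here: the entire analytic content (the reduction-theoretic cusp estimates, the averaging trick, and the Davenport-type lattice-point count) is already encoded in (\ref{oo}) and in Theorem~\ref{refbq}; what remains is purely a bookkeeping/CRT check.
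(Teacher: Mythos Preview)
Your proposal is correct and takes essentially the same approach as the paper: the paper's entire proof of Theorem~\ref{cong2} is the single sentence ``Theorem~\ref{cong2} follows from Equations~(\ref{bigint}) and~(\ref{oo}), together with the identity $km^{-5}=\prod_p\mu_p(S)$,'' and you have simply unpacked each of these three ingredients explicitly, including the Chinese Remainder justification for the density identity.
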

Theorem~\ref{cong2} follows from Equations (\ref{bigint}) and
(\ref{oo}), together with the identity $km^{-5}=\prod_p\mu_p(S)$.

We will also have occasion to use the following weighted version of
Theorem \ref{cong2}; the proof is identical.

\begin{theorem}\label{cong3}
  Let $p_1,\ldots,p_k$ be distinct prime numbers. For $j=1,\ldots,k$,
  let $\phi_{p_j}:V_\Z\to\R$ be a $\GL_2(\Z)$-invariant function on
  $V_\Z$ such that $\phi_{p_j}(f)$ depends only on the congruence
  class of $f$ modulo some power $p_j^{a_j}$ of $p_j$.  Let
  $N_\phi(V_\Z^{(i)};X)$ denote the number of irreducible
  $\GL_2(\Z)$-orbits in $V_\Z^{(i)}$ having height bounded by $X$,
  where each orbit $\GL_2(\Z)\cdot f$ is counted with weight
  $\phi(f):=\prod_{j=1}^k\phi_{p_j}(f)$. Then we have
\begin{equation}
N_\phi(V_\Z^{(i)};X)
  = N(V_\Z^{(i)};X)
  \prod_{j=1}^k \int_{f\in V_{\Z_{p_j}}}\tilde{\phi}_{p_j}(f)\,df+O(X^{3/4+\epsilon}),
\end{equation}
where $\tilde{\phi}_{p_j}$ is the natural extension of ${\phi}_{p_j}$
to $V_{\Z_{p_j}}$, $df$ denotes the additive
measure on $V_{\Z_{p_j}}$ normalized so that $\int_{f\in
  V_{\Z_{p_j}}}df=1$, and where the implied constant in the error term
depends only on the local weight functions ${\phi}_{p_j}$ and $\epsilon$.
\end{theorem}

\subsection{Uniformity estimates}\label{secunif}

In order to prove Theorems \ref{mainellip} and \ref{ellipcong}, we
require a sieve that allows us to count equivalence classes of
integral binary quartic forms of
bounded height satisfying certain infinite sets of congruence conditions. (In
particular, this sieve will allow us to count equivalences classes of
integral binary quartic forms having
bounded height and {\it squarefree} discriminant.)  A key ingredient for this
sieve---and the purpose of this subsection---is an estimate that
uniformly bounds the error terms in Theorems~\ref{cong2} and
\ref{cong3} as more and more congruence conditions are
imposed. 

Specifically, we prove the following theorem:

\begin{theorem}\label{thunifbqelem}
  For a prime $p$, let $\W_p(V)$ denote the set of binary
  quartic forms $f\in V_\Z$ such that $p^2\mid\Delta(f)$. Then, for any $M>0$, we have:
$$
\lim_{X\to\infty}\frac{N(\cup_{p>M}\W_p(V);X)}{X^{5/6}}=O\Bigl(\frac1{\log\,M}\Bigr),
$$
where the implied constant is independent of $M$.
\end{theorem}

Such uniformity estimates can in general be quite nontrivial.  In the
current case, to prove this estimate, we use the following
trick.  We embed the space of integral binary quartic forms into the
space of pairs of integral ternary quadratic forms, where such an
estimate has been proven previously~\cite[Proposition~23]{dodqf}.  
More precisely, let
$W_\Z$ denote the space of pairs $(A,B)$ of ternary quadratic forms
having coefficients in $\Z$. We will always identify ternary quadratic
forms over $\Z$ with their Gram matrices whose coefficients lie in
$\frac12\Z$; we may thus express an element $(A,B)\in W_\Z$ as a pair
of $3\times 3$ symmetric matrices via
$$2 \cdot (A,B)=\left(\left[\begin{array}{ccc} 2a_{11} & a_{12} & a_{13} \\ a_{12} & 2a_{22} & a_{23} \\ a_{13} & a_{23} & 2a_{33} \end{array} \right],
  \left[ \begin{array}{ccc} 2b_{11} & b_{12} & b_{13} \\ b_{12} &
      2b_{22} & b_{23} \\ b_{13} & b_{23} & 2b_{33} \end{array}
  \right] \right),$$ where $a_{ij},b_{ij}\in \Z$. 

The group
$\GL_2(\Z)\times \SL_3(\Z)$ acts naturally on the space $W_\Z$.
Namely, an element $g_3\in \SL_3(\Z)$ acts on $W_\Z$ by
$g_3\cdot(A,B)=(g_3Ag_3^t,g_3Bg_3^t)$, while an element
$g_2=\left(\begin{smallmatrix}{} p & q\\r & s \end{smallmatrix}\right)
\in \GL_2(\Z)$ acts by $g_2\cdot(A,B)=(pA+qB,rA+sB)$. The ring of
polynomial invariants for the action of $\GL_2(\Z)\times\SL_3(\Z)$ on
$W_\Z$ is generated by one element, which is called the
{\it discriminant}. The discriminant $\Delta(A,B)$ of an element $(A,B)\in W_\Z$ is
given by the discriminant of the binary cubic form $4\,\Det(Ax-By)$ in $x$ and
$y$, and is thus an invariant of degree 12 in the entries of $A$ and $B$.

The space $V_\Z$ of integral binary quartic forms embeds into $W_\Z$
via the map $\phi$ defined by
\begin{equation}\label{vtow}
\phi: ax^4+bx^3y+cx^2y^2+dxy^3+ey^4\mapsto \left( \left[ \begin{array}{ccc}\phantom0 & \phantom0 & 1/2 \\ \phantom0 & -1 & \phantom0 \\ 1/2 & \phantom0 & \phantom0 \end{array} \right],
\left[ \begin{array}{ccc} a & b/2 & 0 \\ b/2 & c & d/2 \\ 0 & d/2 & e \end{array} \right]\right).
\end{equation}
We denote the first matrix in (\ref{vtow}) by $A_1$, and the subset
of all pairs $(A_1,B)$ of ternary quadratic forms in $W_\Z$ 
by $W_{\Z,1}$. The group
$F_{\Z,1}\times\SO(A_1)\subset\GL_2(\Z)\times\SL_3(\Z)$ preserves
$W_{\Z,1}$, where $F_{\Z,1}$ is the group of all $2\times 2$ lower
triangular matrices over $\Z$ with $1$'s on the diagonal. We also note that the
map $\phi$ is {\it discriminant preserving}, i.e., the discriminant of
an element of $V_\Z$ is equal to the discriminant of its image in
$W_\Z$.  For a binary quartic form $f$, if we write
$\phi(f)=(A_1,B)$, then we
call the binary form $\Det(Ax-By)$ the 
{\it cubic resolvent form} of $f$; note that this form is {\it monic},
i.e., its leading coefficient as a polynomial in $x$ is 1. 

Next, we observe that every 
$F_{\Z,1}$-equivalence class of $W_{\Z,1}$ contains a unique element
$(A_1,B)$ such that the top right entry of $B$ is equal to $0$.
It follows that $\phi$ maps the space of binary quartic forms $V_\Z$ bijectively to
the set of $F_{\Z,1}$-orbits on $W_{\Z,1}$ via the
composite map
$$V_\Z\rightarrow W_{\Z,1}\rightarrow F_{\Z,1}\backslash W_{\Z,1}.$$

We may ask how the action of $\GL_2(\Z)$ on $V_\Z$ manifests itself (via $\phi$) as an action on $F_{\Z,1}\backslash W_{\Z,1}$.
To answer this, note that the center of $\GL_2(\Z)$ acts trivially on its representation on
binary quadratic forms $px^2-2qxy+ry^2$ via $\gamma\cdot
f(x,y):=f((x,y)\cdot\gamma)/(\det \gamma)$.
This action of $\GL_2(\Z)$ preserves the discriminant $4(q^2-pr)$ of these binary quadratic forms, yielding the map
\begin{equation}\label{eqtwistedaction1}
\begin{array}{rcl}
\rho:\PGL_2(\Z)&\rightarrow&\SL_3(\Z),\;\; \mbox{given explicitly by}\\[.1in]
{\left(\begin{array}{cc} a & {b} \\ c & d \end{array}\right)}&\mapsto& \displaystyle\frac{{1}}{ad-bc}{\left(\begin{array}{ccc} {d^2} & {cd} & {c^2} \\ {2bd} & {ad+bc} & {2ac} \\ {b^2} & {ab} & {a^2}
\end{array}\right).}
\end{array}
\end{equation}
Since $A_1$ is the Gram matrix of the ternary form
$q^2-pr$, we see that
the image of $\PGL_2(\Z)$ is contained in the orthogonal group
$\SO(A_1,\Z)$, and is in fact equal to it (see \cite[Lemma~4.4.2]{MWt}).

For any ring $R$, let $V_R$ denote the space of binary quartic forms
with coefficients in $R$. The center of $\GL_2(R)$ acts trivially under
the ``twisted action'' of $\GL_2(R)$ on $V_R$ defined by 
\begin{equation}\label{eqtwistedaction}
\gamma\cdot f(x,y):=(\det\gamma)^{-2}f((x,y)\cdot\gamma),  
\end{equation}
yielding an action of $\PGL_2(R)$ on $V_R$. Note that the
$\PGL_2(\Z)$-orbits on $V_\Z$ are the same as the $\GL_2(\Z)$-orbits
on $V_\Z$, since
$\left(\begin{smallmatrix}{-1}&{}\\{}&{-1}\end{smallmatrix}\right)\in\GL_2(\Z)$
acts trivially on $V_\Z$.

It is now easily checked that $\phi(\gamma\cdot f)$ and
$\rho(\gamma)\cdot\phi(f)$ are the same element in $F_{\Z,1}\backslash
W_{\Z,1}$ for all $\gamma\in\PGL_2(\Z)$ and $f\in V_\Z$. Therefore,
we have the following theorem, which will be essential in proving 
the uniformity estimate of Theorem~\ref{thunifbqelem}:

\begin{theorem}\label{thmvztowz}
  The map $\phi$ defined by $(\ref{vtow})$ gives a canonical bijection
  between $\PGL_2(\Z)$-orbits on $V_\Z$ and $F_{\Z,1}\times {\rm
    SO}(A_1,\Z)$-orbits on $W_{\Z,1}$.
\end{theorem}

We thus obtain a natural map 
\begin{equation}\label{eqpsimap}
\psi:\PGL_2(\Z)\backslash V_\Z\to (\GL_2(\Z)\times\SL_3(\Z))\backslash W_\Z  
\end{equation}
given by the composite map
\begin{equation}\label{eqvtowcomp}
\PGL_2(\Z)\backslash V_\Z \to 
   (F_{\Z,1}\times {\rm SO}(A_1,\Z))\backslash 
W_{\Z,1}\to(\GL_2(\Z)\times\SL_3(\Z))\backslash W_\Z.
\end{equation}

\vspace{-0.05in}

\begin{remark}
{\em It is proven in \cite{quarparam} that the
orbit space $(\GL_2(\Z)\times\SL_3(\Z))\backslash W_\Z$ corresponds to
isomorphism classes of 
pairs $(Q,R)$, where $Q$ is a quartic ring and $R$ is a cubic resolvent ring of $Q$.
Meanwhile, using the map (\ref{vtow}), Wood \cite{Wood} proves
that the orbit space $\PGL_2(\Z)\backslash V_\Z$ corresponds to
isomorphism classes of triples $(Q,R,x)$, where $Q$ is a quartic ring, $R$ is a monogenic cubic resolvent ring 
of $Q$, and $x$ is a {\it monogenizer} of $R$, i.e., $x$ generates $R$ as a $\Z$-algebra (so that $R=\Z[x]$).
It follows that the map $\psi$ in (\ref{eqpsimap}) corresponds to the map
$$\{(Q,R,x)\} \to \{(Q,R)\},$$
which takes a quartic ring with a monogenized cubic resolvent ring and simply forgets its monogenizer (and the fact that $R$ is monogenic).}
\end{remark}

Before we state and prove the desired uniformity estimate, we require
the following key proposition:

\begin{proposition}\label{atmosttwelve}
  An element of $(\GL_2(\Z)\times\SL_3(\Z))\backslash W_\Z$ with nonzero discriminant has at most $12$ preimages in $\PGL_2(\Z)\backslash V_\Z$ under the map $\psi$.
\end{proposition}
\begin{proof}
  By Theorem \ref{thmvztowz}, it suffices to prove that an element $w$ of
  $\GL_2(\Z)\times\SL_3(\Z)\backslash W_\Z$ has at most $12$ preimages
  in $F_{\Z,1}\times\SO(A_1,\Z)\backslash W_{\Z,1}$.  Let 
  $\{(A_1,B_\alpha)\}$ be a set of $F_{\Z,1}\times\SO(A_1,\Z)$-inequivalent preimages of $w$ in 
  $W_{\Z,1}$, where $\alpha$ ranges over
  some (possibly infinite) set $\mathcal{A}$. The integral
  binary cubic forms $g_\alpha(x,y):=4\,\Det(A_1x-B_\alpha y)$ all 
have $x^3$-coefficient equal to $1$, i.e., $g_\alpha(1,0)=1$.
Since the  $(A_1,B_\alpha)$ are pairwise
  $F_{\Z,1}\times\SO(A_1,\Z)$-inequivalent but are all
  $\GL_2(\Z)\times\SL_3(\Z)$-equivalent, we see that the $g_\alpha$
  are pairwise $F_{\Z,1}$-inequivalent but are all $\GL_2(\Z)$-equivalent.
  
  The deep results in \cite{Del} and \cite{Ev}, which assert that
  $g(x,y)=1$ has at most $12$ solutions with $(x,y)\in\Z\times\Z$ for
  an integral binary cubic form $g$ of nonzero discriminant, now imply that the cardinality of
  $\mathcal{A}$ is at most $12$.
\end{proof}

We may now proceed to the proof of Theorem \ref{thunifbqelem}. To this
end, let $\W_p(V)\subset V_\Z$ denote the set of integral binary
quartic forms $f$ such that $p^2\mid\Delta(f)$.  We partition
$\W_p(V)$ into two disjoint sets $\W_p^{(1)}(V)$ and $\W_p^{(2)}(V)$.
Here, $\W_p^{(1)}(V)$ is the set of
all binary quartic forms $f$ whose discriminant is {\it strongly
  divisible} by $p^2$, i.e., $p^2\mid \Delta(f+pg)$ for all $g\in
V_\Z$. The set $\W_p^{(2)}(V)$ is the set of all binary quartic forms $f\in
V_\Z$ whose discriminant, in the terminology of \cite{geosieve}, is
{\it weakly divisible} by $p^2$, i.e., there exists $g\in V_\Z$ such
that $p^2\nmid \Delta(f+pg)$.

Then an element $f\in\W_p^{(1)}(V)$ is either a multiple of
$p$ or the {\it splitting type} of $f$ at $p$ is $(1^31)$, $(1^21^2)$,
$(2^2)$, or $(1^4)$, i.e., either $f\in pV_\Z$ or the reduction of $f$
modulo $p$ factors into irreducible factors over $\F_p$ as $c(x-\alpha
y)^3(x-\beta y)$, $c(x-\alpha y)^2(x-\beta y)^2$, $c(x^2+\alpha
xy+\beta y^2)^2$, or $c(x-\alpha y)^4$, respectively.

The desired uniformity estimate for $\W_p^{(1)}(V)$ follows by
applying the following quantitative version of a result of
Ekedahl~\cite{Ek}, proven in \cite[Theorem~3.3]{geosieve}:

\begin{theorem}\label{gsthm}
  Let $B$ be a compact region in $\R^n$ having finite measure, and let
  $Y$ be any closed subscheme of $\A^n_\Z$ of codimension $k\geq2$.
Let $r$ and $M$ be positive real numbers.  Then we have
\begin{equation}
\#\{v\in rB\cap \Z^n\,\,|\,\,v\!\!\!\!\!\pmod{p} \in Y(\F_p)\,\,\mbox{for some prime } p>M\} \,=\, 
O\left(\frac{r^n}{M^{k-1}\log M}+r^{n-k+1}\right),
\end{equation}
where the implied constant depends only on $B$ and on $Y$.
\end{theorem}
To apply this result, 
recall that we used $\FF_{\PGL_2}$ to denote the
fundamental domain $N'A'K$ for the left action of $\PGL_2(\Z)$ on
$\PGL_2(\R)$. For $0<\epsilon<1$, we denote by
$\FF^{(\epsilon)}_{\PGL_2}$ the subset of elements
$n(u)a(t)k\in\FF_{\PGL_2}$ where $t$ is bounded above by a suitable
constant to ensure that
$$\Vol(\FF^{(\epsilon)}_{\PGL_2})=(1-\epsilon)\Vol(\FF_{\PGL_2}).$$
Then,
for fixed $\epsilon>0$, the set $\FF^{(\epsilon)}_{\PGL_2}\cdot
R^{(i)}(X)$ (with $R^{(i)}(X)$ as defined in \S2.4) is a bounded
region in $V_\R$ that expands homogeneously as $X$ grows. We have
the following theorem:

\begin{theorem}\label{thunifbqelem1}
Let $0<\epsilon<1$ be fixed. For $i\in\{0,1,2+,2-\}$, we have
  \begin{equation}\label{equnifbqsm1}
    \#\left\{\FF^{(\epsilon)}_{\PGL_2}\cdot
      R^{(i)}(X)\bigcap(\displaystyle\cup_{p>M}\W^{(1)}_p(V))\right\}=O(X^{5/6}/(M\log
    M)+X^{2/3}),
  \end{equation}
where the implied constant depends only on $\epsilon$.
\end{theorem}
Indeed, the discriminants of elements in $\W_p^{(1)}(V)$ are strongly
divisible by $p^2$.  Theorem~\ref{thunifbqelem1} thus follows from Theorem~\ref{gsthm} (with $n=5$, $k=2$, and $r=X^{1/6}$) because, as noted in~\cite{geosieve},
if an element in $v\in V_\Z$ has discriminant $\Delta$ strongly
divisible by $p^2$, then it lies in $Y(\F_p)$, where $Y$ is the
codimension 2 subscheme of $V\cong \A^5$ defined by the vanishing of
$\Delta$ and $\partial\Delta/\partial e$.

However, a uniformity estimate for $\W_p^{(2)}(V)$---the set of
elements in $V_\Z$ having discriminant divisible, but not strongly
divisible, by $p^2$---is more difficult to obtain. It is for this case
that we consider the embedding (\ref{vtow}) of $V_\Z$ into $W_\Z$,
where we can then use previously obtained uniformity estimates for
$W_\Z$. We state the relevant estimate for $W_\Z$ below:
\begin{theorem}\label{unifw}
{\bf (\cite[Proposition 23]{dodqf})} Let $\W_p^{(2)}(W)$ denote the set of elements in $W_\Z$ whose
  discriminants are divisible, but not strongly divisible, by
  $p^2$. Then the number of $\GL_2(\Z)\times\SL_3(\Z)$-orbits on
  $\W_p^{(2)}(W)$ having discriminant bounded by $X$ is $O(X/p^2)$, where
  the implied constant is independent of $p$.
\end{theorem}
We may use this uniformity estimate for $\W_p^{(2)}(W)$ to obtain one
for $\W_p^{(2)}(V)$.  Specifically, in conjunction with Proposition
\ref{atmosttwelve}, we obtain the estimate
\begin{equation}\label{equnifbqfirst}
  N(\W_p^{(2)}(V);X)=O(X/p^2),
  \end{equation}
  where the implied constant is independent of $X$ and $p$. 

\begin{theorem}\label{thunifbqelem2}
Let $0<\epsilon<1$ be fixed. For $i\in\{0,1,2+,2-\}$, we have
  \begin{equation}\label{equnifbqsm2}
    \#\left\{\FF^{(\epsilon)}_{\PGL_2}\cdot R^{(i)}(X)\bigcap(\displaystyle\cup_{p>M}\W^{(2)}_p(V))\right\}=O(X^{5/6}/\log\,M),
  \end{equation}
where the implied constant is independent of $X$ and $M$.
\end{theorem}
\begin{proof}
  We define $R_X^{(\epsilon)}:=\FF^{(\epsilon)}_{\PGL_2}\cdot
  R^{(i)}(X)$ and obtain an individual bound on
  $\#\{R_X^{(\epsilon)}\cap \W^{(2)}_p(V)\}$ for each prime $p$. When
  viewed as a polynomial in $e$, the derivative of $\Delta$ with
  respect to $e$ is a nonzero cubic polynomial
  $\partial\Delta/\partial e$ in $e$.  If a binary quartic form
  $f(x,y)=a_0x^4+b_0x^3y+c_0x^2y^2+d_0xy^3+e_0y^4$ belongs to
  $\W_p^{(2)}$, then for this form $f$ we must have $p^2\mid \Delta$
  and $p\nmid\partial\Delta/\partial e$ (for otherwise $f$ would
  belong to~$\W_p^{(1)}$). Since $R_X^{(\epsilon)}$ is a homogeneously
  expanding region in $V_\R=\R^5$ with each side growing at the order
  of $X^{1/6}$, there are $O(X^{4/6})$ possibilities for a quadruple
  $(a_0,b_0,c_0,d_0)$ such that $f(x,y)\in R_X^{(\epsilon)}\cap V_\Z$
  for some $e_0$. Given fixed values of $a_0$, $b_0$, $c_0$, and
  $d_0$, there are at most $3$ choices for the residue of $e_0$
  (mod~$p$) such that $p\mid\Delta$. Since
  $p\nmid \partial\Delta/\partial e$, each such residue modulo $p$ has
  a unique lift modulo $p^2$ such that $p^2\mid\Delta$.  Hence, we
  have
\begin{equation}\label{equnifbqsecond}
\#\{R_X^{(\epsilon)}\cap\W_p^{(2)}(V))\}=O(\max\{X^{5/6}/p^2,X^{4/6}\}),
\end{equation}
where we may use the first estimate for $p\leq X^{1/12}$ and the
second estimate for $p>X^{1/12}$. Since there are $O(X^{1/6}/\log X)$
primes in the range $[1,X^{1/6}]$, and since
$\sum_{p>X^{1/6}}1/p^2=O(1/(X^{1/6}\log X))$, we obtain
$$
\#\left\{R_X^{(\epsilon)}\textstyle{\bigcap}(\displaystyle\cup_{p>M}\W^{(2)}_p(V))\right\}=O(\sum_{p>M}\#\{R_X^{(\epsilon)}\cap\W_p^{(2)}(V)\})=O(X^{5/6}/\log M)
$$
by using (\ref{equnifbqsecond}) to estimate
$\#\{R_X^{(\epsilon)}\cap\W_p^{(2)}(V)\}$ when $p< X^{1/6}$,
and using 
(\ref{equnifbqfirst}) 
when 
$p\geq X^{1/6}$.
\end{proof}

Using the above two uniformity estimates, we obtain a proof of Theorem~\ref{thunifbqelem}:

\vspace{0.1in}
\noindent{\bf Proof of Theorem \ref{thunifbqelem}:} Let $R(X)$ denote $\cup_i
R^{(i)}(X)$. By the results of \S2.1, we have:
  \begin{equation}\label{eqfinalunif}
    \begin{array}{rcl}
      N(\cup_{p>M}\W_p(V),X)&\leq& \#\{\FF_{\PGL_2}\cdot R(X)\bigcap(\displaystyle\cup_{p>M}\W_p(V))\cap V_\Z^\irr\}\\[0.1in]
      &\leq&\#\{\FF^{(\epsilon)}_{\PGL_2}\cdot R(X)\bigcap(\displaystyle\cup_{p>M}\W_p(V))\}+\#\{(\FF_{\PGL_2}\backslash\FF^{(\epsilon)}_{\PGL_2})\cdot R(X)\cap V_\Z^\irr)\}.      
    \end{array}
  \end{equation}
  By Theorems \ref{thunifbqelem1} and \ref{thunifbqelem2}, the first
  term in the second line of (\ref{eqfinalunif}) is bounded by
  $O(X^{5/6}/\log\,M+X^{2/3})$. The results of \S2.3 and \S2.4 imply
  that the second term is bounded by
  $\Vol((\FF_{\PGL_2}-\FF^{(\epsilon)}_{\PGL_2})\cdot
  R(X))=O(\epsilon X^{5/6})$.  Since this holds for all
  $\epsilon>0$, the theorem follows. $\Box$

\subsection{A squarefree sieve}\label{squarefree}

For the applications, we require a more general congruence version of our
counting theorem for binary quartic forms, namely, one which allows
appropriate infinite sets of congruence conditions to be imposed and
which also allows weighted counts of lattice points (where weights are
also assigned by congruence conditions). More precisely, we say that a
function $\phi:V_\Z\to[0,1]\subset\R$ is {\it defined by congruence
  conditions} if, for all primes $p$, there exist functions
$\phi_p:V_{\Z_p}\to[0,1]$ satisfying the following conditions:
\begin{itemize}
\item[(1)] For all $f\in V_\Z$, the product $\prod_p\phi_p(f)$
  converges to $\phi(f)$.
\item[(2)] For each prime $p$, the function $\phi_p$ is 
locally constant outside some closed set $S_p \subset V_{\Z_p}$ of measure zero.
\end{itemize}
Such a function $\phi$ is called {\it acceptable} if, for sufficiently
large primes $p$, we have $\phi_{p}(f)=1$ whenever $p^2\nmid
\Delta(f)$.  For example, the
characteristic function of the set of integral binary quartic forms
having squarefree discriminant is an acceptable function.

We then have the following version of Theorem~\ref{cong3}, in which we
allow weights to be defined by certain infinite sets of congruence
conditions:
\begin{theorem}\label{thsquarefreebq}
  Let $\phi:V_\Z\to[0,1]$ be an acceptable function that is defined by
  congruence conditions via the local functions $\phi_{p}:V_{\Z_p}\to[0,1]$. Then, with
  notation as in Theorem~$\ref{cong3}$, we have:
\begin{equation}
N_\phi(V_\Z^{(i)};X)
  = N(V_\Z^{(i)};X)
  \prod_{p} \int_{f\in V_{\Z_{p}}}\phi_{p}(f)\,df+o(X^{5/6}).
\end{equation}
\end{theorem}
\begin{proof}
  Since $\phi_p$ is locally constant outside some set of measure zero,
  there exists an increasing sequence of functions
  $\psi_{p,1}\leq\psi_{p,2}\leq\cdots$ that are bounded
  above by and converge pointwise to 
  $\phi_p$, and a decreasing sequence of functions
  $1=\psi'_{p,0}\geq\psi'_{p,1}\geq\psi'_{p,2}\geq\cdots$ that are bounded
  below by and converge pointwise to~$\phi_p$, 
such that $\psi_{p,n}$ and $\psi'_{p,n}$
  are defined on $V_{\Z_p}$ by congruence conditions modulo $p^n$.
It will also be convenient in the formulas that follow to define $\psi_{p,0}$ to equal the constant function $1$ on $V_{\Z_p}$.

 By  the dominated convergence theorem, we have
  \begin{equation}\label{eqsqflimint}
\lim_{n\to\infty}\int_{V_{\Z_p}}\psi_{p,n}(f)df=\lim_{n\to\infty}\int_{V_{\Z_p}}\psi'_{p,n}(f)df=\int_{V_{\Z_p}}\phi_{p}(f)df.
  \end{equation}
Furthermore, since $\phi$ is acceptable we have
\begin{equation}\label{eqsqfden}
1-\int_{V_{\Z_p}}\phi_p(f)df\leq \int_{\substack{f\in V_{\Z_p}\\p^2\mid\Delta(f)}}df\ll p^{-2}
\end{equation}
for sufficiently large $p$ (see, for example, \cite[Proof of Theorem 3.2]{BPS}).

For a fixed integer
  $Y$, let $N_\psi^Y(V_\Z^{(i)};X)$ (resp.\ $N_{\psi'}^Y(V_\Z^{(i)};X)$) denote the number of irreducible
  $\GL_2(\Z)$-orbits in $V_\Z^{(i)}$ having height bounded by $X$,
  where each orbit $\GL_2(\Z)\cdot f$ is counted with weight
  $$\displaystyle\prod_{p}\psi_{p,\lfloor Y/p\rfloor}(f) \;\;\;\;\Bigl(\mbox{resp}.\ \displaystyle\prod_{p}\psi'_{p,\lfloor Y/p\rfloor}(f)\Bigr).$$ 
  The function $\lfloor Y/p\rfloor$ is chosen to take nonzero values
  only for finitely many primes $p$ for any fixed $Y$.  Therefore, it
  follows from Theorem~\ref{cong3} that, for any fixed $Y$, we have
  \[\limsup_{X\rightarrow\infty}\frac{N_{\phi}(V^{(i)}_\Z;X)}{X^{5/6}}\leq\limsup_{X\rightarrow\infty}\frac{N^Y_{\psi'}(V_\Z^{(i)};X)}{X^{5/6}}=\lim_{X\rightarrow\infty}\frac{N(V_\Z^{(i)};X)}{X^{5/6}}\prod_{p}\int_{f\in V_{\Z_{p}}}\psi'_{p,\lfloor Y/p\rfloor}(f)\,df.\]
Equation (\ref{eqsqfden}) implies that the product $\prod_p\int_{V_{\Z_p}}\phi_p(f)df$ converges.
Letting $Y$ tend to infinity, we have by (\ref{eqsqflimint}) that
  \begin{equation}\label{eqsieveproof1}
\limsup_{X\rightarrow\infty}\frac{N_\phi(V_\Z^{(i)};X)}{X^{5/6}}\leq
    \lim_{X\rightarrow\infty}\frac{N(V_\Z^{(i)};X)}{X^{5/6}}\prod_{p}\int_{f\in V_{\Z_{p}}}\phi_{p}(f)\,df.
\end{equation}

We now obtain a lower bound using Theorem~\ref{thunifbqelem}.  For
sufficiently large $p$ and $n\geq 1$, we have
$\psi_{p,n}(f)=\phi_p(f)=1$ unless $p^2\mid \Delta(f)$. Thus, for sufficiently large $Y$, we have
\begin{eqnarray*}
  \liminf_{X\rightarrow\infty}\frac{N_{\phi}(V^{(i)}_\Z;X)}{X^{5/6}}&\geq&\liminf_{X\rightarrow\infty}\Bigl[\frac{N^Y_\psi(V_\Z^{(i)};X)}{X^{5/6}}-\frac{O({N(\displaystyle\cup_{p>Y}\W_p(V);X)})}{X^{5/6}}\Bigr]\\[.1in] &=&
  \lim_{X\rightarrow\infty}\frac{N(V_\Z^{(i)};X)}{X^{5/6}}\cdot \prod_{p}\int_{f\in
    V_{\Z_{p}}}\psi_{p,\lfloor
    Y/p\rfloor}(f)\,df
-O(1/\log\, Y),
\end{eqnarray*}
where the first inequality follows because $\phi$ is
an upper bound for $\psi_{p,n}$ unless $n=0$, and the last equality follows
from Theorems~\ref{cong3} and \ref{thunifbqelem}. Taking the limit as $Y$ tends to infinity then yields
  \begin{equation}\label{eqsieveproof2}
\displaystyle\liminf_{X\rightarrow\infty}\frac{N_\phi(V_\Z^{(i)};X)}{X^{5/6}}
= 
\displaystyle
   \lim_{X\rightarrow\infty}
 \frac{N(V_\Z^{(i)};X)}{X^{5/6}}\cdot\prod_{p}\int_{f\in V_{\Z_{p}}}\phi_{p}(f)\,df
\end{equation}
where we use (\ref{eqsqfden}) to exchange the limit (in $Y$) and product, and
(\ref{eqsqflimint}) to exchange the limit (in $Y$) and integral. The
theorem now follows from (\ref{eqsieveproof1}) and
(\ref{eqsieveproof2}).
\end{proof}

\subsection{Proofs of auxiliary results (Lemma~\ref{rstab}, 
Lemma~\ref{gl2zbigstab}, and Theorem~\ref{eligible})}\label{secaux}

The proofs of the auxiliary results referred to in the title all turn
out to have natural interpretations in terms of the monic cubic
resolvent forms of binary quartic forms, as discussed in
\S\ref{secunif}.  More precisely, a {\it monic binary cubic form}
$g(x,y)$ is defined as a binary cubic form $g(x,y)$ whose leading
coefficient as a polynomial in $x$ is equal to $1$, i.e., it is of the
form $x^3+rx^2y+sxy^2+ty^3$.  We denote the space of binary cubic
forms over $\Z$ by $U_\Z$, and the subset of monic binary cubic forms
over $\Z$ by $U_{\Z,1}\subset U_\Z$. Note that if $(A,B)\in W_\Z$,
then $4\,\Det(Ax-By)\in U_\Z$, and that if $(A_1,B)\in W_{\Z,1}$, then
$4\,\Det(A_1x-By)\in U_{\Z,1}$.

The group $F_{\Z,1}$ acts naturally on
$U_{\Z,1}$ via $\gamma \cdot g(x,y)=g((x,y)\cdot\gamma)$.
If $g(x,y)=x^3+rx^2y+sxy^2+ty^3$, then one easily sees that the quantities 
\begin{equation}\label{eqmcij}\begin{array}{rcl}I(g)&:=&r^2-3s,\\[.025in]
J(g)&:=&-2r^3+9rs-27t\end{array}\end{equation}
are invariant under the action of $F_{\Z,1}$.  The discriminant
$\Delta(g)$ of the binary cubic form $g$ can be expressed in terms of these
basic invariants $I(g)$ and $J(g)$, namely,
$\Delta(g)=(4I(g)^3-J(g)^2)/27$.
We again define the {\it height} of $g$ by
$$H(g):=H(I,J)=\max\{|I(g)^3|,J(g)^2/4\}.$$
If $F_{\Q,1}$ denotes the group of lower triangular matrices in
$\SL_2(\Q)$ with 1's on the diagonal, then by using an
$F_{\Q,1}$-transformation to clear out the $x^2y$-coefficient, we see that $g(x,y)$ is
$F_{\Q,1}$-equivalent to the monic binary cubic form
$h(x,y)=x^3-\frac{I(f)}{3}xy^2-\frac{J(f)}{27}y^3$.  

If $f\in V_\Z$ is an integral binary quartic form, then as in
\S\ref{secunif} we define the monic {\it cubic resolvent form} of $f$
by $g(x,y)=4\,\Det(A_1x-B_fy)$, where $(A_1,B_f)$ is the image of $f$
under the map $\phi$ defined in~(\ref{vtow}). It is easy to check that
$I(f)=I(g)$ and $J(f)=J(g)$. 
The elliptic curve
$E_f:z^2=g(x,1)$ (which we may also write as 
$z^2=x^3-\frac{I(f)}{3}x-\frac{J(f)}{27}$) turns out to be
the Jacobian of the genus one curve $C_f$ in weighted projective space
$\P(1,1,2)$ determined by the equation $z^2=f(x,y)$; furthermore, the
stabilizer of~$f$ in $\PGL_2(\Q)$ is isomorphic to $E_f(\Q)[2]$ (see
Theorem~\ref{eqtwistedaction}).  This connection between $f$ and $E_f$
will be of key importance in the next section.

We first use this connection to prove Lemma~\ref{gl2zbigstab},
which states that the stabilizer
in $\GL_2(\R)$ of $f\in V_\R$ is $8$ or $4$ in accordance with whether the
discriminant of $f$ is positive or negative, respectively.

\vspace{.125in}

\noindent
{\bf Proof of Lemma \ref{rstab}:} Consider the action of
$\PGL_2(\R)$ on $V_\R$ defined by (\ref{eqtwistedaction}). If $f\in
V_\R$ has nonzero discriminant, then Theorem \ref{thparame2e} in
Section~3 (which does not rely on the results of this section) asserts that
$\Stab_{\PGL_2(\R)}(f)$ is isomorphic to $E(\R)[2]$, where $E$ is the
elliptic curve given by
$y^2=x^3-\frac{I(f)}{3}x-\frac{J(f)}{27}$. Therefore,
$\#\Stab_{\PGL_2(\R)}(f)$ is equal
to $2$ if $\Delta(f)<0$ and
equal to $4$ if $\Delta(f)>0$.

Now if $\gamma\in\GL_2(\R)$
stabilizes $f\in V_\R$ under the usual action (defined in (\ref{equntwistedaction})), 
then since $I(\gamma\cdot f)=(\det
\gamma)^4I(f)$ and $J(\gamma\cdot f)=(\det \gamma)^6J(f)$, we see that
$\det \gamma=\pm1$. Hence the
image of $\gamma$ in
$\PGL_2(\R)$ also stabilizes $f$. Since there are two elements in the
center of $\GL_2(\R)$ that stabilize $f$, the size of
the stabilizer in $\GL_2(\R)$ of an element $f\in V^{(i)}_\R$ is $4$ when $i=1$ (equivalently $\Delta(f)<0$) and
$8$ when $i=0$ or $2$ (equivalently $\Delta(f)>0$), as desired.  $\Box$\vspace{.125 in}

To prove Lemma~\ref{gl2zbigstab}, which states that the number of
$\GL_2(\Z)$-orbits on binary quartic forms having bounded height and a
nontrivial stabilizer in $\PGL_2(\Q)$ is negligible, we use the
following lemma:

\begin{lemma}\label{mcredest}
  The number of $F_{\Z,1}$-orbits on monic integral binary cubic forms
  $g$ such that $g$ is reducible over $\Q$ and $H(g)<X$ is $O(X^{1/2+\epsilon})$.
\end{lemma}

\begin{proof}
  First, we note that if $g(x,y)=x^3+rx^2y+sxy^2+ty^3\in U_{\Z,1}$, then by
  replacing $g$ with an $F_{\Z,1}$-translate if necessary we may assume
  that $r\in\{-1,0,1\}$. Throughout the rest of this proof, we will
  assume that this is the case. If $g$ is such that $H(g)<X$, then
  since $|I(g)|^3=|r^2-3s|^3\leq H(g)<X$, we see that $s=O(X^{1/3})$.
  Since $J(g)^2/4=(2r^3+9rs-27t)^2/4\leq H(g)<X$, this in turn implies
  that $t=O(X^{1/2})$.

  Let us now count such forms $g$ that are reducible.
  If $g(x,y)=x^3+rx^2y+sxy^2+ty^3$ satisfies $t=0$ (and $r\in\{-1,0,1\}$), then $g$ is
  reducible, and the number of such forms $g$
  with $H(g)<X$ is the number of possible values for $r$ and
  $s$, namely $3\cdot O(X^{1/3})=O(X^{1/3})$.

  Next, we consider those reducible forms
  $g(x,y)=x^3+rx^2y+sxy^2+ty^3$ satisfying $H(g)<X$, $r\in\{-1,0,1\}$
  and $t \neq 0$. If $x-my$ is a factor of $g$, then $m\mid t$.
  Therefore, if we fix $t \neq 0$, then there are at most
  $t^{\epsilon} = O(X^{\epsilon})$ choices for $m$. Moreover, once
  $r$, $t,$ and $m$ are fixed, then setting $g(m,1)$ equal to 0
  determines $s$.  Since $t=O(X^{1/2})$, and there are at most 3
  possible values for $r$, it follows that there are at most
  $O(X^{1/2+\epsilon})$ such reducible forms $g$ with height
  less than $X$.
\end{proof}

\noindent{\bf Proof of Lemma \ref{gl2zbigstab}:} Suppose an integral binary
quartic form $f$ has a stabilizer of size at least $2$
in $\PGL_2(\Q)$. Then Theorem \ref{thparame2e} asserts that $E(\Q)[2]$ is
nontrivial, where $E$ is given by
$y^2=x^3-\frac{I(f)}3x-\frac{J(f)}{27}$. This implies that the cubic
resolvent form $g$ of $f$ is reducible over $\Q$. If we further assume
that $H(f)<X$, then Lemma \ref{mcredest} implies that there are at
most $O(X^{1/2+\epsilon})$ choices for the $F_{\Z,1}$-orbit of $g$.

Now, if the $\GL_2(\Z)$-orbit of a reducible integral binary cubic
form $g$ having height $X$ is fixed, then \cite[Proof of
Lemma~12]{dodqf} implies that the number of
$\GL_2(\Z)\times\SL_3(\Z)$-orbits on $W_\Z$ having $g$ as a cubic
resolvent form is bounded by $O(X^{1/4})$. In conjunction with Proposition
\ref{atmosttwelve}, this implies that the number of
$\PGL_2(\Z)$-orbits on $V_\Z$ having $g$ has a cubic resolvent form is
also at most $O(X^{1/4})$. Therefore, the number of
$\PGL_2(\Z)$-orbits on $V_\Z$ having a nontrivial stabilizer in
$\PGL_2(\Q)$ and height less than $X$ is bounded by
$O(X^{1/4}X^{1/2+\epsilon})=O(X^{3/4+\epsilon})$. This concludes the
proof of Lemma \ref{gl2zbigstab}. $\Box$\vspace{.1 in}

Finally, we determine when a pair of invariants $(I,J)\in\Z\times\Z$
is eligible,
thus proving Theorem \ref{eligible}.

\vspace{.125in}

\noindent
{\bf Proof of Theorem \ref{eligible}:} If an integral binary quartic
form has invariants equal to $I$ and $J$, then its cubic resolvent
form also has invariants equal to $I$ and $J$. 
Conversely, suppose an integral pair $(I,J)$ occurs as the invariants
of an integral monic binary cubic form
$g(x,y)=x^3+rx^2y+sxy^2+ty^3$. Then one checks that the cubic
resolvent form of the binary quartic form
$f(x,y)=x^3y+rx^2y^2+sxy^3+ty^4$ is equal to $g$, and so $f$ has
invariants equal to $I$ and $J$.  Therefore the pair $(I,J)$ is
eligible.  Hence, to prove Theorem \ref{eligible}, it suffices to answer the simpler question: which integral pairs $(I,J)$
occur as invariants of integral monic binary cubic forms?

  Suppose the integral monic binary cubic form
  $g(x,y)=x^3+rx^2y+sxy^2+ty^3\in U_{\Z,1}$ has invariants $I$ and $J$. 
By replacing $g$ with an $F_{\Z,1}$-translate if necessary, we may assume that $r\in\{-1,0,1\}$. This does not change the invariants $I$ and $J$.
  If $I \equiv 0 \pmod3$ then $r=0$, implying that
  $27\mid J$.  
  This is condition~(a) in Theorem \ref{eligible}.

  If $I$ is not divisible by $3$, then $r$ equals $1$ or $-1$ and we
  have $I \equiv 1 \pmod3$. Thus $I$ must be congruent to $1$, $4,$ or $7
  \pmod9$, which happens exactly when $s$ is congruent to $0$, $2,$ or $1
  \pmod 3$, respectively.  Because $r^2=1$, we see that $J \equiv
  r(9s-2) \pmod{27}$. It follows that $I\equiv 1$, $4$, $7\pmod 9$
  corresponds to $J\equiv \pm2$, $\pm16$, $\pm7\pmod{27}$, respectively,
  yielding conditions (b), (c), and (d).

  Therefore, if a pair $(I,J)$ occurs as the invariants of an integral
  monic binary cubic form, then it must satisfy one of the conditions
  of Theorem \ref{eligible}.  The converse also follows easily by
  reversing the above arguments. This concludes the proof of Theorem \ref{eligible}. $\Box$

\section{The average size of the $2$-Selmer groups of elliptic curves}

Recall that every elliptic curve $E$ over $\Q$ can be written
in the form
\begin{equation}\label{eqellip}
E_{A,B}: y^2=x^3+Ax+B,
\end{equation}
where $A,B\in\Z$ and $p^4\nmid A$ if $p^6\mid B$.  For any elliptic
curve $E=E_{A,B}$ over $\Q$ written in the form (\ref{eqellip}), we
define the quantities $I=I(E)$ and $J=J(E)$ by
\begin{equation}\label{eqIJE}
\begin{array}{rcl}
I(E)&:=&-3A,\\[.02in]
J(E)&:=&-27B,\end{array}\end{equation}
and denote the curve $E_{A,B}$ also by $E^{I,J}$.
The height of $E_{A,B}=E^{I,J}$ is then defined by
\begin{equation*}
H(E_{A,B})=\max\{4|A^3|,27B^2\}=\frac{4}{27}\max\{I(E)^3,J(E)^2/4\}.
\end{equation*}
In this section, we shall work with the slightly different height
$H'(E)$ defined by
\begin{equation}
H'(E):=H(I(E),J(E))=\max\{|I(E)|^3,J(E)^2/4\}, 
\end{equation}
so that the height agrees with the height defined for binary quartic forms
in (\ref{hbqdef}).  Note that $H$ and $H'$ only differ by a constant factor;
namely, for every elliptic curve $E$ over $\Q$ we have
$27H(E)=4H'(E)$.

Our purpose in this section is to prove Theorem \ref{ellipcong} by
computing the average size of the $2$-Selmer group of
elliptic curves $E/\Q$ when these curves are ordered by their heights (note
that the two heights $H$ and $H'$ give the same ordering on every set
of elliptic curves).  Theorem \ref{mainellip}, being a special case of
Theorem~\ref{ellipcong}, will then follow.

In fact, we prove a statement stronger than Theorem \ref{ellipcong}.
To state this result, we need some notation.  For each prime $p$, let
$\Sigma_p$ be a closed subset of $\Z_p^2\backslash\{\Delta\neq 0\}$
whose boundary has measure $0$. To such a collection $(\Sigma_p)_p$,
we associate the set $F_\Sigma$ of elliptic curves over $\Q$, where
$E^{I,J}\in F_\Sigma$ if and only if $(I,J)\in\Sigma_p$ for all
$p$. We then say that $F_\Sigma$ is a family of elliptic curves over
$\Q$ that is {\it defined by congruence conditions}. We can also
impose ``congruence conditions at infinity'' on $F_\Sigma$ by
insisting that an elliptic curve $E^{I,J}$ belongs to $F_\Sigma$ if
and only if $(I,J)$ belongs to $\Sigma_\infty$, where $\Sigma_\infty$
is equal to $\{(I,J)\in\R^2:\Delta(I,J)>0\}$,
$\{(I,J)\in\R^2:\Delta(I,J)<0\}$, or $\{(I,J)\in\R^2:\Delta(I,J)\neq
0\}$.

If $F$ is any nonempty family of elliptic curves over $\Q$ defined by
congruence conditions, then let
$\Inv(F)$ denote the set $\{(I(E),J(E)):E\in F\}$. We define
$\Inv_p(F)$ to be the set of those elements $(I,J)$ in the $p$-adic
closure of $\Inv(F)\subset\Z_p^2$ such that
$\Delta(I,J):=(4I^3-J^2)/27\neq 0$. Also, we define $\Inv_\infty(F)$ by
$\{(I,J)\in\R^2:\Delta(I,J)>0\}$,
$\{(I,J)\in\R^2:\Delta(I,J)<0\}$, or $\{(I,J)\in\R^2:\Delta(I,J)\neq
0\}$ in accordance with whether $F$ contains only curves of positive
discriminant, negative discriminant, or both.  A family $F$ of
elliptic curves defined by congruence conditions is then said
to be {\it large} if, for all but finitely many primes~$p$, the
set $\Inv_p(F)$ contains all pairs $(I,J)\in\Z_p\times\Z_p$ such that
$p^2\nmid\Delta(I,J)$.  
In this section, we prove the following
strengthening of Theorem \ref{ellipcong}.

\begin{theorem}\label{ellipall}
  When all elliptic curves $E$ in any large family are ordered by height,
  the average size of the $2$-Selmer group $S_2(E)$ is~$3$.
\end{theorem}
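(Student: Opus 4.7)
The plan is to reduce the computation of the average size of $S_2(E)$ over $F$ to a count of $\GL_2(\Z)$-equivalence classes of integral binary quartic forms satisfying infinitely many local conditions, then invoke Theorem~\ref{refbq} together with the uniformity estimate of Proposition~\ref{unifbq}, and finally evaluate the resulting Euler product via local mass formulae for $E/\Q_p$.

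\textbf{Step 1: Integral parametrization.} I would first establish, for each $E = E_{A,B}$, a canonical bijection between non-identity elements of $S_2(E)$ and $\GL_2(\Z)$-equivalence classes of integral binary quartic forms $f$ such that (i) $(I(f),J(f)) = (I(E),J(E)) = (-3A,-27B)$ up to a controlled adjustment at the primes $2$ and $3$, and (ii) $y^2 = f(x)$ is soluble over $\R$ and over $\Q_p$ for every prime $p$. This refines the Birch--Swinnerton-Dyer description of locally soluble 2-coverings recalled in the introduction: the content is that every $\GL_2(\Q)$-class of quartics representing a 2-Selmer element in fact admits an \emph{integral} representative with the stated invariants, which follows from the integrality of $(I(E),J(E))$ together with the smoothness of the local 2-coverings. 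The identity class of $S_2(E)$ corresponds to the unique orbit of forms with a rational linear factor.

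\textbf{Step 2: Sieve to infinitely many congruence conditions.} Summing over $E \in F$ with $H'(E) < X$, the non-identity Selmer count becomes $\sum_i N(\mathcal{L}_F^{(i)};X)$, where $\mathcal{L}_F^{(i)} \subset V_\Z^{(i)}$ is the subset cut out by the infinite set of local solubility conditions together with the constraint $(I(f),J(f)) \in F_p^\inv$ for each $p$. Truncating to primes $p < Y$ and applying Theorem~\ref{cong2} gives the main term, and the tail estimate $N(\mathcal{W}_p(V);X) = O(X^{5/6}/p^{5/3})$ from Proposition~\ref{unifbq} is summable in $p$ and therefore lets $Y \to \infty$. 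One obtains
\begin{equation*}
\sum_{\substack{E \in F \\ H'(E) < X}} (|S_2(E)| - 1)
\;=\; \sum_i \frac{\Vol(\mathcal{R}_X(L_V^{(i)}))}{n_i}\, \prod_p \mu_p(\mathcal{L}_F^{(i)}) \;+\; o(X^{5/6}).
\end{equation*}

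\textbf{Step 3: Local mass formulae.} The main computational ingredient is that, at each prime $p$ of good reduction at which $F$ is large, the $p$-adic density of locally soluble binary quartic forms with given invariants, integrated against $\mu_p$ over $F_p^\inv$, produces a local ratio equal to $|H^1(\Q_p, E[2])|/|E(\Q_p)/2E(\Q_p)|$, which by local Tate duality and the Euler characteristic formula for $E[2]$ evaluates to $2$. Combining these ratios across all primes, together with the archimedean factors arising from the signature stratification $i=0,1,2$ and the weights $n_i$ in Proposition~\ref{IJcount}, and handling the finitely many bad primes (and $p=2,3$) via Tamagawa factors, the product collapses to give that the average of $|S_2(E)|-1$ over $F$ (using the denominator from Proposition~\ref{IJcount}) equals exactly $2$. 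Adding back the identity yields the asserted average of $3$.

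\textbf{Main obstacle.} The genuinely difficult input, the uniform bound Proposition~\ref{unifbq}, has already been proved in Section~4 by the detour through the $12$-dimensional representation $W_\Z$; without it, the product over primes in Step~2 could not be interchanged with the limit $X \to \infty$. Beyond that, the most delicate remaining item is Step~1 (producing integral representatives with the correct invariants, especially at $p=2,3$) and the local mass-formula computation of Step~3 at primes of bad reduction, where the precise Tamagawa and Kodaira-type contributions must be reconciled with the density of eligible $(I,J)\in F_p^\inv$ permitted by the largeness hypothesis on $F$.
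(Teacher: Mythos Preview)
Your overall architecture matches the paper's, but Steps~1 and~3 contain errors that would make the computation fail. In Step~1 you assert a bijection between non-identity Selmer elements and $\GL_2(\Z)$-classes of integral quartics with the right invariants; what one actually obtains (Theorem~\ref{2spar}, after the minimization Lemmas~\ref{min5}--\ref{min2}) is a bijection with $\PGL_2(\Q)$-classes of locally soluble integral quartics having invariants $(2^4I,2^6J)$, and a single $\PGL_2(\Q)$-class may split into several $\GL_2(\Z)$-classes. The paper's remedy is to select one $\GL_2(\Z)$-orbit per $\PGL_2(\Q)$-orbit (the set $S^F$) and compute its $p$-adic density via a Jacobian change of variables (Proposition~\ref{denel}); the stabilizer weight $1/\#E_{I,J}[2](\Q_p)$ enters precisely here, through Lemma~\ref{lemstab}. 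A related gap in Step~2: the tail you must control is the set of forms that are \emph{bad at $p$} (either not $\Q_p$-soluble, or $\PGL_2(\Q_p)$- but not $\PGL_2(\Z_p)$-equivalent to another integral form), which is not literally $\mathcal W_p(V)$; one needs the extra argument of Proposition~\ref{unifellip} showing that bad-at-$p$ forms fail to be strongly maximal at $p$, so that Proposition~\ref{unifbq} indeed applies.

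In Step~3 your local ratio $|H^1(\Q_p,E[2])|/|E(\Q_p)/2E(\Q_p)|$ is the wrong quantity and does not equal $2$: for odd $p$ the local Euler characteristic gives $\#H^1=\bigl(\#E[2](\Q_p)\bigr)^2$, so your ratio is $\#E[2](\Q_p)\in\{1,2,4\}$, not a constant. The ratio that actually emerges from Proposition~\ref{denel} and~(\ref{ecden}) is $\#\bigl(E(\Q_p)/2E(\Q_p)\bigr)/\#E[2](\Q_p)$, which by the Brumer--Kramer formula (Lemma~\ref{bk1}) equals $1$ at every odd prime and $2$ at $p=2$; the Euler product therefore collapses to $2$ for \emph{every} large family $F$, with no Tamagawa or bad-reduction bookkeeping required. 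The underlying mechanism is orbit-count divided by stabilizer size, not Tate duality.
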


Note that the family of all elliptic curves is large. So too is
the family of elliptic curves $E:y^2=g(x)$ defined by finitely many
congruence conditions on the coefficients of $g$. Thus
Theorems~\ref{mainellip} and \ref{ellipcong} indeed follow
from~Theorem \ref{ellipall}.  Finally, we note that the family of all
semistable elliptic curves is also large.

\subsection{Preliminaries on binary quartic forms and 2-coverings of
  elliptic curves}\label{secminmod}

The key to proving Theorem~\ref{ellipall} is the use of a classical
correspondence between elements in the 2-Selmer group of an elliptic
curve $E^{I,J}$ over $\Q$ and locally soluble integral binary quartic
forms having invariants $2^4I$ and $2^6J$. This correspondence was
originally introduced by Birch
and Swinnerton-Dyer~\cite{BSD}, and was developed further by
Cremona~\cite{Cbook} (see also~\cite{CS}, \cite{CF}, and \cite{BhHo}).
We collect here the results that we will need on this correspondence.
Throughout this section, we use the action of $\PGL_2$ on $V$ as
defined by (\ref{eqtwistedaction}).

We say that a binary quartic form over a field $K$ is {\it $K$-soluble}
if the equation $z^2=f(x,y)$ has a solution with $x,y,z\in K$ and
$(x,y)\neq (0,0)$.
The first paragraph of the following
theorem is contained in~\cite[Proposition 2.2]{CS}, while the second
follows from~\cite[\S3--5 and Remark 1]{CF}.
(For more details, see \cite[\S4.1]{BhHo}.)
\begin{theorem}\label{thparame2e}
  Let $K$ be a field having characteristic not $2$ or $3$. Let
  $E:y^2=x^3-\frac{I}3x-\frac{J}{27}$ be an elliptic curve over $K$.
  Then there exists a bijection between elements in $E(K)/2E(K)$ and
  $\PGL_2(K)$-orbits of $K\!$-soluble binary quartic forms having
  invariants $I$ and $J$, given by
$$
(\xi,\eta)+2E(K)\mapsto \PGL_2(K)\cdot\left(\frac{1}{4}x^4-\frac{3}{2}\xi x^2y^2+2\eta xy^3+\left(\frac{I}{3}-\frac{3}{4}\xi^2\right)y^4\right).
$$
Under this bijection, the identity element in
$E(K)/2E(K)$ corresponds to the $\PGL_2(K)$-orbit of binary quartic
forms having a linear factor over $K$.

Furthermore, the stabilizer in $\PGL_2(K)$ of any $($not necessarily
$K\!$-soluble$)$ binary quartic form $f$ in $V_K$, having nonzero
discriminant and invariants $I$ and $J$, is isomorphic to $E(K)[2]$,
where $E$ is the elliptic curve defined by
$y^2=x^3-\frac{I}{3}x-\frac{J}{27}$.
\end{theorem}

Next, recall that a 
binary quartic form $f\in V_\Q$ is called {\it locally soluble}
if $f$ is $\R$-soluble and $\Q_p$-soluble for all primes $p$. We then have
the following proposition (see \cite[Lemma 2]{BSD} and the discussion
following it).
\begin{proposition}\label{propparamrational}
  Let $E:y^2=x^3-\frac{I}{3}x-\frac{J}{27}$ be an elliptic curve over
  $\Q$. Then there exists a bijection between isomorphism classes of
  locally soluble $2$-coverings of $E$ and $\PGL_2(\Q)$-orbits of
  locally soluble binary quartic forms in $V_\Q$ having invariants
  $I$ and $J$.

  Furthermore, the set of rational binary quartic forms having a
  rational linear factor and invariants equal to $I$ and $J$ lie
  in a single $\PGL_2(\Q)$-orbit, and this orbit corresponds to
  the identity element in the $2$-Selmer group of $E$.
\end{proposition}

In order to prove Theorem \ref{ellipall}, we will also require the
following lemma, which follows from Lemmas~3, 4, and 5 of \cite{BSD}.

\begin{lemma}\label{lemreduction}
  Let $f\in V_\Q$ be a locally soluble binary quartic form having
  integral invariants $I$ and $J$ such that $(2^4\cdot 3) \mid I$ and
  $(2^6\cdot 3^3) \mid J$. Then $f$ is $\PGL_2(\Q)$-equivalent to an integral
  binary quartic form.
\end{lemma}

Since $E=E^{I,J}$ is also isomorphic to the elliptic curve defined by
$y^2=x^3-\frac{2^4I(E)}{3}x-\frac{2^6J(E)}{27}$,
Proposition~\ref{propparamrational} and Lemma~\ref{lemreduction} now imply
the following theorem:

\begin{theorem}\label{2spar}
  Let $E=E^{I,J}$ be an elliptic curve over $\Q$. Then the elements
  of the $2$-Selmer group of $E$ are in one-to-one correspondence with
  $\PGL_2(\Q)$-equivalence classes of locally soluble integral binary
  quartic forms having invariants equal to $2^4I$ and $2^6J$.

  Furthermore, the set of integral binary quartic forms that have a
  rational linear factor and invariants equal to $2^4I$ and $2^6J$ lie
  in one $\PGL_2(\Q)$-equivalence class, and this class corresponds to
  the identity element in the $2$-Selmer group of $E$.
\end{theorem}

\subsection{A weighted set $S(F)$ of integral binary quartic forms
  associated to a large family $F$ of elliptic curves}
\label{sec5main}

Theorem \ref{2spar} asserts that nonidentity elements in the
$2$-Selmer group of an elliptic curve $E^{I,J}$ over $\Q$ are in
bijective correspondence with $\PGL_2(\Q)$-equivalence classes of
locally soluble integral binary quartic forms having invariants $2^4I$
and $2^6J$ that do not possess a rational linear factor. In~\S2, we
computed the asymptotic number of $\GL_2(\Z)$-orbits of irreducible
integral binary quartic forms having bounded height. By
Lemma~\ref{reducible}, the number of $\GL_2(\Z)$-orbits of binary
quartic forms of bounded height that are the product of two
irreducible integral binary quadratic forms is
negligible. Furthermore, $\GL_2(\Z)$-orbits on $V_\Z$ are exactly the
same as $\PGL_2(\Z)$-orbits on $V_\Z$. Therefore, the same asymptotic
formula in Theorem~\ref{2spar} holds also for the number of $\PGL_2(\Z)$-orbits of integral
binary quartic forms having bounded height and no rational linear
factor.

In order to adapt the latter results to compute the number of
$\PGL_2(\Q)$-equivalence classes of locally soluble integral binary
quartic forms having bounded height and no rational linear factor, we
need to count each $\PGL_2(\Z)$ orbit, $\PGL_2(\Z)\cdot f$,
weighted by $1/n(f)$, where $n(f)$ is equal to the number of
$\PGL_2(\Z)$-orbits inside the $\PGL_2(\Q)$-equivalence class of $f$
in $V_\Z$. For this purpose, it suffices to count the number of
$\PGL_2(\Z)$-orbits of locally soluble integral binary quartic forms
having bounded height and no rational linear factor where each orbit
$\PGL_2(\Z)\cdot f$ is weighted by $1/m(f)$, where
$$m(f):=\displaystyle\sum_{f'\in B(f)}\frac{\#\Aut_\Q(f')}{\#\Aut_\Z(f')}=\displaystyle\sum_{f'\in B(f)}\frac{\#\Aut_\Q(f)}{\#\Aut_\Z(f')};$$ 
here $B(f)$ denotes a set of representatives for the action of $\PGL_2(\Z)$
on the $\PGL_2(\Q)$-equivalence class of $f$ in $V_\Z$, and
$\Aut_\Q(f)$ (resp.\ $\Aut_\Z(f)$) denotes the stabilizer of $f$ in
$\PGL_2(\Q)$ (resp.\ $\PGL_2(\Z)$). The reason it suffices to weight
by $1/m(f)$ instead of $1/n(f)$ is that, by Lemma \ref{gl2zbigstab},
all but a negligible number of $\PGL_2(\Z)$-orbits of integral binary
quartic forms with nonzero discriminant and bounded height have
trivial stabilizer in $\PGL_2(\Q)$; thus all but a negligible number
of $\PGL_2(\Z)$-equivalence classes of integral binary quartic forms
with nonzero discriminant and bounded height satisfy $m(f)=n(f)$.  

Let us use $S(F)$ to denote the set of all locally soluble integral binary
quartic forms having invariants $2^4I$ and $2^6J$, where
$(I,J)\in \Inv(F)$.  Assign to each element $f\in S(F)$ the weight
$1/m(f)$.  Then we conclude that the weighted number of irreducible
$\PGL_2(\Z)$-orbits of height less than $X$ in $S(F)$ is asymptotically
equal to the number of nonidentity 2-Selmer elements of all elliptic
curves of height less than $X$ in $F$.  In the remainder of this
section, our goal is therefore to count the weighted number of
irreducible orbits in $S(F)$ having bounded height. 

The global weights $m(f)$ (as opposed to $n(f)$) are useful for the
following reason.  For a prime $p$ and a binary quartic form $f\in
V_{\Z_p}$, define $m_p(f)$ by
$$m_p(f):=\displaystyle\sum_{f'\in B_p(f)}\frac{\#\Aut_{\Q_p}(f')}{\#\Aut_{\Z_p}(f')}=\displaystyle\sum_{f'\in B_p(f)}\frac{\#\Aut_{\Q_p}(f)}{\#\Aut_{\Z_p}(f')},$$
where $B_p(f)$ denotes a set of representatives for the action of
$\PGL_2(\Z_p)$ on the $\PGL_2(\Q_p)$-equivalence class of $f$ in
$V_{\Z_p}$, and $\Aut_{\Q_p}(f)$ (resp.\ $\Aut_{\Z_p}(f)$) denotes the
stabilizer of $f$ in $\PGL_2(\Q_p)$ (resp.\ $\PGL_2(\Z_p)$).

Then we have the following proposition:

\begin{proposition}\label{propmassform}
  Suppose $f\in V_\Z$ has nonzero discriminant. Then $m(f)=\prod_pm_p(f)$.
\end{proposition}

\begin{proof}
  Let $\PGL_2(\Q)_f$ (resp.\ $\PGL_2(\Q_p)_f$) denote the set of
  elements $\gamma\in\PGL_2(\Q)$ (resp.\ $\PGL_2(\Q_p)$) such that
  $\gamma\cdot f\in V_\Z$ (resp.\ $V_{\Z_p}$).
  Then we have a natural map from $\PGL_2(\Q)_f$ to the set of
  $\PGL_2(\Z)$-orbits on the $\PGL_2(\Q)$-equivalence class of $f$ in
  $V_\Z$ via $\gamma\mapsto\PGL_2(\Z)\gamma\cdot f$. Two elements in
  $\PGL_2(\Q)_f$ map to the same orbit if and only if they map to the same element in the double coset space
$$
\PGL_2(\Z)\backslash\PGL_2(\Q)_f/\Aut_\Q(f).
$$
Thus, the number of elements in $\PGL_2(\Z)\backslash\PGL_2(\Q)_f$
that map to a fixed orbit $\PGL_2(\Z)\cdot f'$ is equal to
$\#\Aut_{\Q}(f)/\#\Aut_{\Z}(f')$, implying that
  $$\#[\PGL_2(\Z)\backslash\PGL_2(\Q)_f]=\displaystyle\sum_{f'\in B(f)}\frac{\#\Aut_{\Q}(f)}{\#\Aut_{\Z}(f')}=m(f).$$
  Similarly, we have that
  $$\#[\PGL_2(\Z_p)\backslash\PGL_2(\Q_p)_f]=\displaystyle\sum_{f'\in B_p(f)}\frac{\#\Aut_{\Q_p}(f)}{\#\Aut_{\Z_p}(f')}=m_p(f).$$

  Now we consider the map
  $$\tau:\PGL_2(\Z)\backslash\PGL_2(\Q)_f\to\prod_p\PGL_2(\Z_p)\backslash\PGL_2(\Q_p)_f$$
  given by the diagonal embedding. Since $\PGL_2(\Q_p)_f=\PGL_2(\Z_p)$
  for all primes $p$ not dividing the discriminant of $f$ (see the proof of
  Proposition~\ref{unifellip} for a stronger result), the product
  $\prod_p\PGL_2(\Z_p)\backslash\PGL_2(\Q_p)_f$ is in fact a finite product.
  It is easy to see that $\tau$ is well-defined and injective. (For
  injectivity, note that if $\gamma_1$ and $\gamma_2$ are elements in
  $\PGL_2(\Q)_f$ that map to the same element under $\tau$, then
  $\gamma_1\gamma_2^{-1}$ is an element of $\PGL_2(\Q)$ and of
  $\PGL_2(\Z_p)$ for all $p$. This implies that
  $\gamma_1\gamma_2^{-1}\in\PGL_2(\Z)$, as desired.)

  The group $\PGL_2(\Q)$ has class number 1 (see \cite[Chapter
  8]{PRAG}). Hence if
  $\sigma\in\prod_p\PGL_2(\Z_p)\backslash\PGL_2(\Q_p)_f$, then there
  exists an element $\gamma\in\PGL_2(\Q)$ such that $\gamma$ maps to
  $\sigma$ under the diagonal embedding. Since $\gamma\cdot f\in
  V_{\Z_p}$ for all $p$, we see that $\gamma\cdot f\in V_\Z$, implying
  $\gamma\in\PGL_2(\Q)_f$. Thus $\tau$ is surjective, completing the
  proof of the proposition.
\end{proof}

Thus the global weights of elements in $S(F)$ are products of local
weights, and so we may express the global weighted density of the set 
$S(F)$ in $V_\Z$ as a product of local weighted densities of the
closures of $S(F)$ in $V_{\Z_p}$.  
We compute these local
densities next, in terms of local masses of 2-coverings of elliptic
curves. 

\subsection{Local densities of the weighted set
$S(F)$ in terms of local masses of 2-coverings of elliptic curves in $F$}

Let $F$ be a large family of elliptic curves. 
Let $S(F)$ again denote the set of all locally soluble integral binary
quartic forms having invariants $2^4I$ and $2^6J$ where
$(I,J)\in \Inv(F)$, and let $S_p(F)$ denote the
$p$-adic closure of $S(F)$ in $V_{\Z_p}$. We now determine the
$p$-adic density of $S_p(F)$, where each element $f\in
S_p(F)$ is weighted by $1/m_p(f)$,
in terms of a {\it local $(p$-adic$)$ mass} $M_p(V,F)$ involving all
isomorphism classes of soluble $2$-coverings of elliptic curves over
$\Q_p$ whose invariants lie in $\Inv_p(F)$.
To do so we need the following proposition, which is a reformulation of 
the change-of-measure assertion of Proposition~\ref{bqjac} with
$\Z_p$ in place of $\R$; we postpone the proof to \S\ref{secjac}.

\begin{proposition}\label{jacpgl2}
Let $p$ be a prime, and let $\phi$ be a continuous function
  on $V_{\Z_p}$. Then
  \begin{equation}\label{eqjacpgl2}
    \int_{V_{\Z_p}}\phi(f)df=\Bigl|\frac{1}{27}\Bigr|_p
\int_{\substack{(I,J)\in \Z_p^2\\\Delta(I,J)\neq 0}}\Bigl(\sum_{f\in\textstyle{\frac{V_{\Z_p}(I,J)}{\PGL_2(\Z_p)}}}\frac{1}{\#\Aut_{\Z_p}(f)}\int_{g\in\PGL_2(\Z_p)}\phi(g\cdot f)\omega(g)\Bigr)dIdJ,
  \end{equation}
  where $\frac{V_{\Z_p}(I,J)}{\PGL_2(\Z_p)}$ denotes a set of
  representatives for the action of $\PGL_2(\Z_p)$ on elements in
  $V_{\Z_p}$ having invariants $I$ and $J$.
\end{proposition}

In certain special cases where $\phi(f)$ is additionally weighted by $1/m_p(f)$, Equation (\ref{eqjacpgl2}) takes on a particularly nice form:
\begin{corollary}\label{corjac}
  Let $p$ be a prime and let $\phi$ be a continuous $\PGL_2(\Q_p)$-invariant
  function on $V_{\Z_p}$ such that every element $f\in
  V_{\Z_p}$ in the support of $\phi$ has nonzero discriminant, is soluble, and satisfies $2^4\cdot 3\mid
  I(f)$ and $2^6\cdot 3^3\mid J(f)$. Then
  \begin{equation}\label{eqjacpgl3}
    \int_{V_{\Z_p}}\frac{\phi(f)}{m_p(f)}df=\Bigl|\frac{1}{27}\Bigr|_p\Vol(\PGL_2(\Z_p))
\int_{\substack{(I,J)\in \Z_p^2\\\Delta(I,J)\neq 0}}\frac{1}{\# E[2](\Q_p)}\Bigl(\sum_{\sigma\in E(\Q_p)/2E(\Q_p)}\phi(f_\sigma)\Bigr)dIdJ,
  \end{equation}
  where $f_\sigma$ is any element in $V_{\Z_p}$ that corresponds to
  $\sigma$ under the correspondence of Theorem~$\ref{thparame2e}$. $($The
  existence of such an $f_\sigma\in V_{\Z_p}$ is the content of
  Lemma~$\ref{lemreduction}.)$
\end{corollary}
\begin{proof}
  Proposition \ref{jacpgl2} implies that we have
  \begin{equation}\label{eqjaccorf}
    \begin{array}{rcl}
\displaystyle\int_{V_{\Z_p}}\displaystyle\frac{\phi(f)}{m_p(f)}df&=&\Bigl|\displaystyle\frac{1}{27}\Bigr|_p
\displaystyle\int_{\substack{(I,J)\in \Z_p^2\\\Delta(I,J)\neq 0}}\Bigl(\displaystyle\sum_{f\in\textstyle{\frac{V_{\Z_p}(I,J)}{\PGL_2(\Z_p)}}}\displaystyle\frac{1}{\#\Aut_{\Z_p}(f)}\int_{g\in\PGL_2(\Z_p)}\displaystyle\frac{\phi(g\cdot f)}{m_p(g\cdot f)}dg\Bigr)dIdJ\\[0.3in]
&=&\Bigl|\displaystyle\frac{1}{27}\Bigr|_p\Vol(\PGL_2(\Z_p))
\displaystyle\int_{\substack{(I,J)\in \Z_p^2\\\Delta(I,J)\neq 0}}\Bigl(\displaystyle\sum_{f\in\textstyle{\frac{V_{\Z_p}(I,J)}{\PGL_2(\Z_p)}}}\displaystyle\frac{\phi(f)}{m_p(f)\#\Aut_{\Z_p}(f)}\Bigr)dIdJ,
    \end{array}
  \end{equation}
  since both $\phi$ and $m_p$ are
  $\PGL_2(\Z_p)$-invariant. We now evaluate the sum within the
  integral in the second line of (\ref{eqjaccorf}). For $f\in
  V_{\Z_p}$, let $f=f_1,f_2,\ldots,f_k$ be the set of all elements in
  $\frac{V_{\Z_p}(I,J)}{\PGL_2(\Z_p)}$ that are
  $\PGL_2(\Q_p)$-equivalent to $f$. Then since $\phi$ and $m_p$ are $\PGL_2(\Q_p)$-invariant, we have
\begin{equation*}%\label{eqtest}
  \begin{array}{rcl}    
\displaystyle\sum_{i=1}^k\displaystyle\frac{\phi(f_i)}{m_p(f_i)\#\Aut_{\Z_p}(f_i)}&=&\displaystyle\frac{\phi(f)}{m_p(f)}\displaystyle\sum_{i=1}^k\displaystyle\frac{1}{\#\Aut_{\Z_p}(f_i)}\;\;=\;\;
\phi(f)\left(\displaystyle\sum_{i=1}^k\displaystyle\frac{\#\Aut_{\Q_p}(f)}{\#\Aut_{\Z_p}(f_i)}\right)^{-1}\displaystyle\sum_{i=1}^k\displaystyle\frac{1}{\#\Aut_{\Z_p}(f_i)}\\[0.2in] &=&\displaystyle\frac{\phi(f)}{\#\Aut_{\Q_p}(f)}.
  \end{array}
\end{equation*}
Therefore, we obtain
\begin{equation}\label{eqjaccors}
\int_{V_{\Z_p}}\frac{\phi(f)}{m_p(f)}df=
\Bigl|\frac{1}{27}\Bigr|_p\Vol(\PGL_2(\Z_p))\int_{\substack{(I,J)\in \Z_p^2\\\Delta(I,J)\neq 0}}\Bigl(\displaystyle\sum_{f\in\textstyle{\frac{V_{\Z_p}(I,J)}{\PGL_2(\Q_p)}}}\displaystyle\frac{\phi(f)}{\#\Aut_{\Q_p}(f)}\Bigr)dIdJ,  
\end{equation}
where $\frac{V_{\Z_p}(I,J)}{\PGL_2(\Q_p)}$ analogously denotes a set
consisting of one element from each $\PGL_2(\Q)$-equivalence class in
$V_{\Z_p}$ having invariants $I$ and $J$. Theorem \ref{thparame2e} and
Lemma \ref{lemreduction} imply that soluble elements in
$\frac{V_{\Z_p}(I,J)}{\PGL_2(\Q_p)}$ are in bijective correspondence
with elements in $E(\Q_p)/2E(\Q_p)$. Theorem \ref{thparame2e} further
states that $\Aut_{\Q_p}(f)$ is isomorphic to
$E^{I(f),J(f)}[2](\Q_p)$. Therefore, Corollary \ref{corjac} follows
from (\ref{eqjaccors}).
\end{proof}

We now have the following proposition which determines the necessary
local $p$-adic masses.
\begin{proposition}\label{denel}
We have
$$\int_{S_p(F)}\frac{1}{m_p(f)}df=|2^{10}/27|_p\cdot\Vol(\PGL_2(\Z_p))\cdot M_p(V,F),$$
where
\begin{equation}\label{eqmpvf}
M_p(V,F):=\displaystyle\int_{(I,J)\in \Inv_p(F)}\frac{\#(E^{I,J}(\Q_p)/2E^{I,J}(\Q_p))}{\#E^{I,J}(\Q_p)[2]}dIdJ.
\end{equation}
\end{proposition}
\begin{proof}
  The set $S_p(F)$ consists of all $\Q_p$-soluble binary quartic forms
  having invariants $2^4I$ and $2^6J$ with $(I,J)\in
  \Inv_p(F)$. Proposition \ref{denel} thus follows directly from
  Corollary \ref{corjac} since $E^{I,J}(\Q_p)$ is isomorphic to
  $E^{2^4I,2^6J}(\Q_p)$ and the volume of $\{(2^4I,2^6J)|(I,J)\in
  \Inv_p(F)=|2^{10}|_p\cdot\Vol(\Inv_p(F))$.
\end{proof}

\subsection{A change-of-measure formula}\label{secjac}

In this subsection, our aim is to prove the change-of-variables formula
that is contained in Proposition~\ref{bqjac} and
Proposition~\ref{jacpgl2} over $\R$ and over $\Q_p$, respectively.
We begin by proving first the following result over $\C$:
\begin{proposition}\label{propjacfirststep}
  Let $\omega$, $dv$, and $dIdJ$ be as in Proposition~$\ref{bqjac}$. Let $R\subset\C^2$ be an open set and $s:R\to V_\C$
  be a continuous function such that the binary quartic form
  $s_{I,J}:=s(I,J)$ has invariants equal to $I$ and $J$ for each
  $(I,J)\in R$. Then there exists a nonzero rational number $\J$ such
  that for any measurable function $\phi:V_\C\to\R$, we have
$$\int_{v \in \PGL_2(\C)\cdot
  s(R)}\phi(v)dv=|\J|\int_{R}\int_{\PGL_2(\C)}
\phi(g\cdot s_{I,J})\,\omega(g) \,dIdJ,$$
where we regard $\PGL_2(\C)\cdot s(R)$ as a multiset.
\end{proposition}
\begin{proof}
  Let us begin with the special case when the function $s$ is locally 
analytic. Then we know that
\begin{equation}
  \label{eqjacfirststep}
\int_{v \in \PGL_2(\C)\cdot s(R)}\phi(v)dv=\int_{(I,J)\in\C^2}\int_{\PGL_2(\C)}\J_s(g,I,J)\phi(g\cdot s_{I,J})\,\omega(g) \,dIdJ,  
\end{equation}
where $\J_s(g,I,J)$ is the Jacobian change of variables of the map 
\begin{equation}
  \begin{array}{rcl}
    \psi_s:\PGL_2(\C)\times R&\to& V_\C\\
    (g,(I,J))&\mapsto&g\cdot s_{I,J}.
  \end{array}
\end{equation}
Note that $\J_s(g,I,J)$ is continuous in $g$, $I$, and $J$.
In what follows, we prove that $\J_s(g,I,J)$ is independent of $g$, $I$, $J$, and $s$.

\vspace{.1in}
\noindent{\bf Step 1:} $\J_s(g,I,J)$ is independent of $g\in\PGL_2(\C)$.

\vspace{.065in}
Suppose there exists $(I,J)\in R$ and $g_1,g_2\in\PGL_2(\C)$ such that
$\J_s(g_1,I,J)\neq\J_s(g_2,I,J)$. Then, by continuity and the fact that $\omega(g)$ is $\PGL_2(\C)$-invariant, there exists an open set
$B_1\subset\PGL_2(\C)$ containing $g_1$ such that
$\int_{B_1}\J_s(g,I,J)\omega(g)\neq\int_{g_2g_1^{-1}B_1}\J_s(g,I,J)\omega(g)$. By continuity, there then exists an open set $N\subset R$ containing $(I,J)$ such that
\begin{equation}
  \label{eqindgjac}
\int_{(I,J)\in
  N}\int_{B_1}\J_s(g,I,J)\omega(g)dIdJ\neq\int_{(I,J)\in
  N}\int_{g_2g_1^{-1}B_1}\J_s(g,I,J)\omega(g)dIdJ.  
\end{equation}
From (\ref{eqjacfirststep}) it follows that the left hand side of
(\ref{eqindgjac}) is equal to the volume of $B_1\cdot N$ while the
right hand side of (\ref{eqindgjac}) is equal to the volume of
$g_2g_1^{-1}B_1\cdot N$. Since the map $g_2g_1^{-1}:V_\C\to V_\C$ is
via an element in $\SL(V_\C)$, we obtain the desired contradiction.

\vspace{.1in}
\noindent{\bf Step 2:} $\J_s(I,J):=\J_s(g,I,J)$ is independent of $s$.

\vspace{.065in}
Let $s':R\to V_\C$ be another locally analytic function such that the
invariants of $s'_{I,J}:=s'(I,J)$ are $I$ and $J$ for each
$(I,J)\in R$. Since $\PGL_2(\C)\cdot s(R)$ and $\PGL_2(\C)\cdot s'(R)$ are the same multisets, we have
$$\int_{v \in \PGL_2(\C)\cdot s'(R)}\phi(v)dv=\int_{v \in \PGL_2(\C)\cdot s(R)}\phi(v)dv=\int_{(I,J)\in\C^2}\int_{\PGL_2(\C)}\J_s(I,J)\phi(g\cdot s_{I,J})\,\omega(g) \,dIdJ.$$
For each $(I,J)\in \C^2$ let $g_{I,J}\in\PGL_2(\C)$ be such that $g_{I,J}\cdot s_{I,J}=s'_{I,J}$. Then, because $\omega(g)$ is both a left and a right Haar-measure, we obtain
\begin{eqnarray*}
\int_{(I,J)\in\C^2}\int_{g\in \PGL_2(\C)}\J_s(I,J)\phi(g\cdot s_{I,J})\omega(g) dIdJ&=&\int_{\C^2}\int_{\PGL_2(\C)}\J_s(I,J)\phi(g g_{I,J}\cdot s_{I,J})\omega(g) dIdJ\\
&=&\int_{\C^2}\int_{\PGL_2(\C)}\J_s(g,I,J)\phi(g\cdot s'_{I,J})\omega(g) dIdJ.
\end{eqnarray*}
Hence it follows that
$$\int_{v \in \PGL_2(\C)\cdot s'(R)}\phi(v)dv=\int_{(I,J)\in\C^2}\int_{\PGL_2(\C)}\J_s(I,J)\phi(g\cdot s'_{I,J})\,\omega(g) \,dIdJ.$$
Thus $\J_{s'}(I,J)=\J_s(I,J)$ as desired.

\vspace{.1in}
\noindent{\bf Step 3:} $\J(I,J):=\J_s(I,J)$ is a nonzero polynomial in $I$ and $J$ with rational coefficients.

\vspace{.065in}
We can choose $s$ such that the coefficients of $s_{I,J}$ are rational
polynomials in $I$ and $J$; for example, let
$s_{I,J}:=x^3y-\frac{I}{3}xy^3-\frac{J}{27}y^4$. Since $\J(I,J)$ is
the determinant of a $5\times 5$ matrix whose entries are polynomials in the coefficients
of $s_{I,J}$, it follows that $\J(I,J)$ is a rational polynomial in
$I$ and $J$. Because $\psi_s(\PGL_2(\C),\C^2)$ is a full measure set
in $V_\C$, we obtain that $\J(I,J)$ is nonzero.

\vspace{.1in}
\noindent{\bf Step 4:} $\J:=\J(I,J)$ is a nonzero rational constant.

\vspace{.065in}
  Let $G_0\subset \PGL_2(\C)$ be a bounded subset having volume $1$ and let
  $R_0$  be any bounded measurable set in $\C^2$.
We denote the set of all elements $s_{I,J}$ with
  $(I,J)\in R_0$ by $B=B(R_0)$. Then 
\begin{equation}\label{scalingeq0}
\int_{G_0\cdot B}dv=\int_{(I,J)\in R_0}\J(I,J)dIdJ,
\end{equation}
where we view $G_0\cdot B$ as a multiset.  
Now for any $c\in \C$, we have by (\ref{scalingeq0}) that
\begin{equation}\label{scalingeq1}
\int_{cG_0\cdot B}dv=|c|^{5}\int_{G_0\cdot B}dv=|c|^5\int_{(I,J)\in R_0}\J(I,J)dIdJ
\end{equation}
  because $V_\C$ has dimension $5$.  On the other hand, we may
  evaluate the left hand side of (\ref{scalingeq1}) in another way; namely,
using (\ref{scalingeq0}) with $cB$ in place of $B$, we obtain
\begin{equation}\label{scalingeq2}
\int_{cG_0\cdot B}dv=\int_{G_0\cdot cB}dv=\int_{(c^{-2}I,c^{-3}J)\in
  R_0}\J(I,J)dIdJ =\int_{(I',J')\in
  R_0}\J(c^2I',c^3J')\,|c^2|\,dI'\,\,|c^3|dJ'
\end{equation}
because $I$ and $J$ are homogeneous polynomials of degree 2 and 3,
respectively. 
Comparing the
right hand sides of (\ref{scalingeq1}) and (\ref{scalingeq2}), we
obtain
\begin{equation}\label{scalingeq3}
\int_{(I,J)\in R_0}\J(I,J)dIdJ=\int_{(I,J)\in
  R_0}\J(c^2I,c^3J)dIdJ.
\end{equation}
Since, by Step 3, $\J(I,J)$ is a nonzero polynomial in $I$ and $J$ having
rational coefficients, and since the equality $(\ref{scalingeq3})$ is
true for all $R_0$ and all $c$, 
we conclude that $\J(I,J)$ must be a nonzero rational constant.

\vspace{0.1in}
Finally, as every continuous function can be locally uniformly approximated
as closely as desired by locally analytic functions (by the Stone--Weierstrass
theorem), the proposition follows.
\end{proof}
\noindent 

Proposition \ref{bqjac}, with $1/27$ replaced by $\J$, now follows
from Proposition \ref{propjacfirststep} and the principle of
permanence of identities. 
More generally, we have 
obtained
the following result:
\begin{proposition}\label{proptocite}
  Let $K$ be $\R$, $\C$, or $\Z_p$ for some prime $p$. Let $dv$ be the
  standard additive measure on $V_K$, the space of all binary quartic
  forms with coefficients in $K$. Let $R$ be an open subset of
  $K\times K$ and let $s:R\to V_K$ be a continuous function such that
  the invariants of $s_{I,J}:=s(I,J)$ are $I$ and $J$.  Then there
  exists a rational nonzero constant $\J$ such that for any measurable
  function $\phi$ on $V_K$, we have
\begin{equation}\label{eqjacpglj}
\int_{v \in \PGL_2(K)\cdot
  s(R)}\phi(v)dv=  |\J|\int_{R}\int_{\PGL_2(K)}
\phi(g\cdot s_{I,J})\,\omega(g) \,dIdJ,
\end{equation}
where we regard $\PGL_2(K)\cdot s(R)$ as a
multiset, $\omega$ is as defined in Section $2.4$, and $|\J|$ denotes
the usual absolute value of $\J$ as an element of $K$. 
\end{proposition}

\vspace{0.1in}

We next wish to prove the statement of Proposition \ref{jacpgl2},
with $1/27$ replaced by $\J$. To do this, because every continuous
function on $V_{\Z_p}$ is locally constant outside a set of
arbitrarily small measure, we may assume that $\phi$ is locally
constant. 
Also, it suffices to prove the statement locally; i.e., for
every element $f\in V_{\Z_p}$ (we may also assume that $\Delta(f)\neq
0$) there exists a neighborhood $B_f$ of $f$ such that
(\ref{eqjacpgl2}), with $1/27$ replaced by $\J$, is true when $\phi$
is the characteristic function of $B_f$. 

Given $f\in
V_{\Z_p}\backslash\{\Delta=0\}$, we now construct such a
neighborhood $B_f$. Let $P\subset V_{\Z_p}$ be a generic $2$-dimensional
plane passing through $f$ defined by linear equations over $\Q$; then
there exists a neighborhood $P_0\subset P$ of $f$ such that the
invariants of any two elements in $P_0$ are distinct in $\Z_p^2$ and
the size of the stabilizers in $\PGL_2(\Z_p)$ of any two elements in
$P_0$ are equal. The first claim in the previous statement follows
from the inverse function theorem for local fields (see
\cite[Proposition 4.3]{Sch}) used on the usual map from
$\PGL_2(\Z_p)\times P$ to $V_{\Z_p}$.  Then we define $B_f$ to be
$\PGL_2(\Z_p)\cdot P_0$ (regarded as a set, not a multiset). Since
the plane $P$ was defined by linear equations over $\Q$, Proposition
\ref{propjacfirststep} and the principle of permanence of identities
implies that
$$
\#\Aut_{\Z_p}(f)\cdot\Vol(B_f)=|\J|_p\cdot\Vol(\PGL_2(\Z_p))\cdot\int_{\Inv_p(P_0)}dIdJ,
$$
where $\Inv_p(P_0)$ denotes the set of all $(I,J)\in\Z_p^2$ that occur
as invariants of some element in $P_0$. We have thus proven
Proposition \ref{jacpgl2}, with $1/27$ replaced by $\J$. 
In fact, our argument yields the following result:

\begin{proposition}\label{jacpglcite}
  Let $K$ be $\R$, $\C$, or $\Z_p$ for some prime $p$, and let $\phi$
  be a measurable function on $V_{K}$. Then there exists a rational
  constant $\J$, independent of $K$ and $\phi$, such that
  \begin{equation}%\label{eqjacpgl2}
    \int_{V_{K}}\phi(f)df=|\J|
\int_{\substack{(I,J)\in K^2\\\Delta(I,J)\neq 0}}\Bigl(\sum_{f\in\textstyle{\frac{V_{K}(I,J)}{\PGL_2(K)}}}\frac{1}{\#\Aut_{K}(f)}\int_{g\in\PGL_2(K)}\phi(g\cdot f)\omega(g)\Bigr)dIdJ,
  \end{equation}
  where $\frac{V_{K}(I,J)}{\PGL_2(K)}$ denotes a set of
  representatives for the action of $\PGL_2(K)$ on elements in
  $V_{K}$ having invariants $I$ and $J$.
\end{proposition}

To complete the proof of Proposition \ref{jacpgl2}, it only
remains to show that the absolute value of $\J$ is equal to $1/27$. We
accomplish this by computing the value of $|\J|_p$ for each prime~$p$.  
Namely, for each prime~$p$, we pick an appropriate set $S\subset
V_{\Z_p}$, and then use (\ref{eqjacpglj}) to express $|\J|_p$ in terms of
the volume of $S$. We then consider $\bar{S}$, the reduction of $S$
modulo $p$, and determine its cardinality to explicitly compute the
volume of $S$, and thereby determine the value of $|\J|_p$.

To this end, we have the following proposition.
\begin{proposition}\label{propjacvol1}
  Let $p$ be a fixed prime number. Let $S\subset
  V_{\Z_p}$ be a set defined by congruence conditions modulo $p$, and let $\bar{S}\subset
  V_{\F_p}$ denote the reduction of $S$ modulo $p$. Assume that $S=\pi^{-1}(\pi(S))$, where $\pi$ is given by taking invariants. Then
\begin{equation}\label{eqcompjfirst}
|\J|_p=
\frac{\#\PGL_2(\F_p)\cdot\Bigl(\displaystyle\sum_{f\in\PGL_2(\F_p)\backslash\bar{S}}\frac1{\#\Aut_{\F_p}(f)}\Bigr)}
{p^{\dim V}\cdot\Vol(\PGL_2(\Z_p))\cdot\Bigl(\displaystyle\int_{(I,J)\in\pi(S)}\displaystyle\sum_{f\in\textstyle{\frac{V_{\Z_p}(I,J)}{\PGL_2(\Z_p)}}}\frac{1}{\#\Aut_{\Z_p}(f)}dIdJ\Bigr)}.
\end{equation}
\end{proposition}
\begin{proof}
Using Proposition \ref{jacpglcite} with $\phi$ replaced by the characteristic function of $S$, we obtain
  \begin{equation}\label{eqpfone}
    \Vol(S)=|\J|_p
\Vol(\PGL_2(\Z_p))\displaystyle\int_{(I,J)\in\pi(S)}\Bigl(\displaystyle\sum_{f\in\textstyle{\frac{V_{\Z_p}(I,J)}{\PGL_2(\Z_p)}}}\frac{1}{\#\Aut_{\Z_p}(f)}\Bigr)dIdJ.
  \end{equation}
Since $S$ is defined by congruence conditions modulo $p$, and since
$\bar{S}$ is $\PGL_2(\F_p)$ invariant (a consequence of the
$\PGL_2(\Z_p)$-invariance of $S$), we have
\begin{equation}\label{eqpftwo}
\Vol(S)=\displaystyle\frac{\#\bar{S}}{p^{\dim V}}=\frac1{p^{\dim V}}\#\PGL_2(\F_p)\cdot\Bigl(\displaystyle\sum_{f\in\PGL_2(\F_p)\backslash\bar{S}}\frac1{\#\Aut_{\F_p}(f)}\Bigr),
\end{equation}
where the final equality follows from the orbit-stabilizer
formula. Equating the right hand sides of (\ref{eqpfone}) and
(\ref{eqpftwo}) yields the proposition.
\end{proof}

\begin{remark}{\em Thus far, we have not used anything specific about
binary quartic forms, and the analogues of the statements and proofs of Propositions \ref{proptocite}---\ref{propjacvol1}
continue to hold if we replace the pair $(\PGL_2,V)$ with any representation $(G,W)$ defined over $\Z$, as
long as the following conditions hold:
\begin{enumerate}
\item $G$ is a semisimple group and $W$ is a {\it coregular} representation of $G$, i.e., the ring of invariants for the action of $G_\C$ on $W_\C$ is freely generated, say, by the polynomials $I_1,\ldots,I_k$ (which we may take to be integral polynomials).
\item The stabilizer in $G_\C$ of any element $v\in W_\C$ outside a
  measure $0$ set of $W_\C$ is finite and absolutely bounded.
\item The sum of the degrees of the $I_j$'s is equal to the dimension of $W$ (in the case of binary quartic forms, we
  had $2+3=5$). This condition is necessary to prove that the relevant
  Jacobian change of variables $\mathcal J$ is independent of the values of $I_1,\ldots,I_k$ in Step 4.
\item There exists a rational polynomial map $\phi:\C^k\to W_\C$ such that $\phi(i_1,\ldots,i_k)$ has invariants $(i_1,\ldots,i_k)$ for each $k$-tuple in $\C^k$.
\end{enumerate}}
\end{remark}

In our case of binary quartic forms, to apply Proposition~\ref{propjacvol1} we may choose $S$, e.g.,  to be the set of binary quartic forms in $V_{\Z_p}$ having some fixed invariants $(I,J)$ modulo $p$.  
The following lemma is then useful in evaluating the right hand side of (\ref{eqcompjfirst}).
\begin{lemma}\label{lemmaprop}
  Let $p$ be a fixed prime, and let $(I,J)\in\Z_p^2$ be an element in
  the image of $\pi$ such that $p^2\nmid\Delta(I,J)$. Then
$$
\displaystyle\sum_{f\in\textstyle\frac{V_{\Z_p}(I,J)}{\PGL_2(\Z_p)}}\frac1{\#\Aut_{\Z_p}(f)}=1.
$$
Let $p\neq 3$ be a prime, and let $(I,J)\in\F_p^2$ be an element such that $\Delta(I,J)\neq 0$. Then
$$
\displaystyle\sum_{f\in\textstyle\frac{V_{\F_p}(I,J)}{\PGL_2(\F_p)}}\frac1{\#\Aut_{\F_p}(f)}=1.
$$
\end{lemma}
\begin{proof}
    Since $p^2\nmid\Delta(I,J)$, Theorem \ref{thparame2e} and Proposition \ref{unifellip} imply that
    \begin{equation}\label{eqlemprop1}
    \Aut_{\Z_p}(f)=\Aut_{\Q_p}(f)=E^{I,J}(\Q_p)[2].
    \end{equation}
    For odd primes $p$, Theorem \ref{thparame2e} and
    \cite[Lemmas 3, 4]{BSD} show that the number of
    $\PGL_2(\Q_p)$-equivalence class in $V_{\Z_p}$ having invariants
    $I$ and $J$ is equal to $\#(E^{I,J}(\Q_p)/2E^{I,J}(\Q_p))$, while
    the results in \cite[Section 6]{CS} show that the number of
    $\PGL_2(\Q_2)$-equivalence class in $V_{\Z_2}$ having invariants
    $I$ and $J$ is equal to $\frac12\#(E^{I,J}(\Q_2)/2E^{I,J}(\Q_2))$.
    The first assertion of Lemma \ref{lemmaprop} now follows from
    Lemma~\ref{bk1}, which states that the value of
    $\#(E^{I,J}(\Q_p)/2E^{I,J}(\Q_p))/\#E^{I,J}(\Q_p)[2]$ is $1$ if
    $p\neq 2$, and $2$ if $p=2$.

    For $p\geq 5$, the second assertion of Lemma \ref{lemmaprop}
    follows from Theorem \ref{thparame2e} with $K$ replaced by $\F_p$,
    and the fact that
    $\#(E^{I,J}(\F_p)/2E^{I,J}(\F_p))/\#E^{I,J}(\F_p)[2]$ is $1$.
    For $p=2$, the lemma follows from a finite computation.
\end{proof}

Let us now choose some specific sets $S\subset V_{\Z_p}$ for each prime $p$. If $p\neq 3$,
let $(I_0,J_0)\in\F_p^2$ be a fixed element such that
$\Delta(I_0,J_0)\neq 0$.
We then define $S$ to be the set of
all $f\in V_{\Z_p}$ such that the reduction of $(I(f),J(f))$ modulo $p$ is equal to
$(I_0,J_0)$. Then Proposition \ref{propjacvol1} in conjunction with Lemma \ref{lemmaprop} implies that
$$
|\J|_p=\frac{\#\PGL_2(\F_p)}{p^5\Vol(\PGL_2(\Z_p))(1/p^2)}=1.
$$

Because the definition of $\Delta$ in terms of $I$ and $J$ requires
division by 27, specifying a given value of $(I,J)$ modulo 3 cannot
alone guarantee that $3\nmid \Delta(I,J)$ (this is indeed the reason
for excluding the case $p=3$ in Lemma~\ref{lemmaprop}).  Hence, in the
case $p=3$, we choose instead a set $S$ defined by conditions on the
invariants $(I,J)$ modulo a higher power of 3.  For example, let $S$
be the set of all $f\in V_{\Z_3}$ such that $I(f)\equiv 3\pmod{9}$.
The proof of Theorem \ref{eligible} immediately implies that if $f\in
V_{\Z_3}$ and $I(f)\equiv 0\pmod{3}$, then the only condition on $J$
is that $J(f)\equiv 0\pmod{27}$.  Thus, if
$f(x,y)=ax^4+bx^3y+cx^2y^2+dxy^3+ey^4\in S$, then $\Delta(f)\not\equiv
0\pmod3$, and we may use the first statement of Lemma \ref{lemmaprop}.
Next, note that $I(f)\equiv 3\pmod{9}$ precisely when $c\equiv
0\pmod{3}$ and $ae-bd\equiv 1\pmod{3}$. Let
$\bar{a},\;\bar{b},\;\bar{c},\;\bar{d}$, and $\bar{e}$ denote the
reductions modulo $3$ of $a,\;b,\;c,\;d$, and $e$, respectively. Then
$f\in S$ if and only if $\bar{c}=0$ and
$\bar{a}\bar{e}-\bar{b}\bar{d}=1$.  There are $24$ values of
$(\bar{a},\bar{b},\bar{c},\bar{d},\bar{e})\in\F_p^5$ satisfying these
two conditions. Therefore,
  $$
|\J|_3=\frac{24}{3^5\Vol(\PGL_2(\Z_p))\Vol(\pi(S))}=\frac{24}{3^5(1-1/3^2)(1/3^5)}=27.
$$
This completes the proof of Proposition \ref{jacpgl2}.

Alternatively, we could choose $S$ to be the set of $f\in V_{\Z_p}$ such
that $p\nmid\Delta(f)$. Then $\bar{S}$, the reduction of $S$ modulo
$p$, is the set of all $f\in V_{\F_p}$ such that $\Delta(f)\neq 0$. An
element of $\bar{S}$ is determined, up to scaling by elements in
$\F_p^\times$, by its roots in
$\P^1_{\overline{\F}_p}$. For example, the number of elements in
$\bar{S}$ having four distinct roots in $\P^1(\F_p)$ is
$(p-1)\frac{1}{24}(p+1)p(p-1)(p-2)$. An elementary computation then yields the following equality:
$$
\#\bar{S}=p^2(p+1)(p-1)^2.
$$
Therefore, \eqref{eqpfone} and Lemma \ref{lemmaprop} imply that we have
$$
|\J|_p=\frac{\Vol(S)}{\Vol(\PGL_2(\Z_p))\Vol(\pi(S))}=\frac{\#\bar{S}}{p^5\Vol(\PGL_2(\Z_p))\Vol(\pi(S))}=\frac{p-1}{p\Vol(\pi(S))}.
$$
The set $\pi(S)$ consists of eligible pairs $(I,J)\in\Z_p^2$ such that
$p\nmid\Delta(I,J)$. (A pair $(I,J)\in\Z_p^2$ is said to be {\it
  eligible} if it occurs as the invariants of some $f\in V_{\Z_p}$.)
We may thus use Theorem \ref{eligible} and compute the volume of
$\pi(S)$ to be $(p-1)/p$ when $p\neq 3$ and $2/81$ when $p=3$. We thus
again obtain $|\J|_p=1$ for $p\neq 3$ and $|\J|_3=27$, 
yielding Proposition \ref{jacpgl2}.

\subsection{The number of elliptic curves of bounded height in a large family}
Suppose $F$ is a large family of elliptic curves. To prove
Theorem \ref{ellipall} we need to estimate
the number of elliptic curves in $F$ that have height bounded by
$X$. In this section, we determine exact asymptotics for the number of
elliptic curves having bounded height in any large family $F$ of
elliptic curves.

As an elliptic curve is determined by its invariants $I$ and $J$, we
estimate the number of pairs $(I,J)$ that belong to $\Inv(F)$ and have
height less than $X$. It follows from an easy application of
Proposition \ref{davlem} that the number of pairs $(I,J)\in\Z\times\Z$
satisfying $H(I,J)<X$ and $4I^3-J^2>0$ (resp.\ $H(I,J)<X$ and
$4I^3-J^2<0$) is equal to the volume of $R^+_X$ (resp.\ $R^-_X$) up to
an error of $O(X^{1/2})$, where the sets $R^\pm_X$ were defined in the
proof of Proposition \ref{IJcount}.  For any set $S\subset
\Z\times\Z$, let $N(S;X)$ denote the number of pairs $(I,J)\in S$,
having height bounded by $X$, satisfying $\Delta(I,J)\neq 0$.

Now, the set $\Inv(F)\subset\Z\times\Z$ is defined by (perhaps
infinitely many) congruence conditions.  To determine the asymptotics
of $N(\Inv(F);X)$ as $X$ goes to infinity, we need the following
uniformity estimate:
\begin{proposition}\label{propunifec}
  The number of elliptic curves $E$ over $\Q$ having height less than
  $X$ such that $p^2$ divides the discriminant of $E$ is
  $O(X^{5/6}/p^{3/2})$, where the implied constant is independent of
  $p$.
\end{proposition}
\begin{proof}
  This proof is very similar to (but much easier than) the proof of
  the uniformity estimate for binary quartic forms in Theorem
  \ref{thunifbqelem}. We start with embedding the
  set $\{x^3+Ax+B:A,B\in\Z\}$ into the bigger space of all integral
  binary cubic forms. Let $U_\Z$ denote the space of all integral
  binary cubic forms. The group $\GL_2(\Z)$ acts on $U_\Z$ by linear substitution of variables. Consider the composite map $\psi=\psi_2\circ\psi_1$ given by
$$
\psi:\{x^3+Ax+B:A,B\in\Z\}\to U_\Z\to\GL_2(\Z)\backslash U_\Z,
$$
where the first map $\psi_1$ sends $x^3+Ax+B$ to the integral binary
cubic form $x^3+Axy^2+By^3$. As in the proof of Proposition
\ref{atmosttwelve}, an element in $\GL_2(\Z)\backslash U_\Z$ has at
most $12$ preimages under $\psi$. This can be seen as follows: if $f$
is in the preimage of the $\GL_2(\Z)$-orbit of $v\in U_\Z$, then there
exists an element $\gamma\in\GL_2(\Z)$ such that $\gamma\cdot
v=\psi_1(f)$. Then $v((1,0)\cdot\gamma)=1$ since $\psi_1(f)$ has
$x^3$-coefficient equal to $1$. The results in \cite{Del} and
\cite{Ev} assert that there are at most $12$ solutions
$(a,b)\in\Z\times\Z$ to the equation $v(a,b)=1$. This implies that $v$
has at most $12$ preimages under $\psi$ because each preimage yields a
different solution to $v(a,b)=1$. From \cite[Proposition 1]{DH}, it follows that the
number of $\GL_2(\Z)$-orbits on $U_\Z$ having discriminant divisible
by $p^2$ is bounded by $O(X/p^2)$. Therefore, the number of elliptic
curves having discriminant divisible $p^2$ is bounded by $O(X/p^2)$ as
well.

To complete the proof of the above proposition, we partition the set
of elliptic curves having discriminant divisible by $p^2$ into two
subsets.  First, consider elliptic curves $E_{A,B}:y^2=x^3+Ax+B$ having
additive reduction at a prime $p>3$. This happens if and only if
$p\mid A$ and $p\mid B$. The number of such pairs $(A,B)\in\Z\times\Z$
having height less than $X$ is clearly bounded by
$O(X^{5/6}/p^2+X^{1/2}/p+1)$. Therefore, the number of elliptic curves
having additive reduction at $p$ and height less than $X$ is bounded
both by $O(X/p^2)$ and by $O(X^{5/6}/p^2+X^{1/2}/p+1)$. These combined
estimates yield a bound of $O(X^{5/6}/p^{5/3})$ which is sufficient.

Now consider those elliptic curves $E_{A,B}$ such that
$p^2\mid\Delta(E_{A,B})$, $E_{A,B}$ has multiplicative reduction at
$p$, and $H'(E_{A,B})<X$. Assuming that $p>3$, we now have $p\nmid
A$. Since $E_{A,B}$ has height bounded by $X$, there are $O(X^{1/3})$
possible choices for $A$ and $O(X^{1/2})$ possible choices for
$B$. With $A$ fixed, there are then $O(1)$ possible choices for the
reduction of $B$ modulo $p^2$. Therefore, the number of such elliptic
curves is bounded by $O(X^{1/3}\cdot(X^{1/2}/p^2+1))$. Combined with
the previously obtained bound of $O(X/p^2)$, we see that the number of
such elliptic curves $E_{A,B}$ is bounded by
$O(X^{5/6}/p^{3/2})$. This concludes the proof.
\end{proof}

Analogously to $M_p(V,F)$, we define the local mass $M_p(F)$ by
\begin{equation}\label{mpufdef}
M_p(F)=\int_{(I,J)\in \Inv_p(F)}dIdJ.
\end{equation}
We also define the following analogues at infinity of $M_p(F)$ and $M_p(V,F)$, respectively.
\begin{equation}\label{eqminf}
  \begin{array}{rcl}
    M_\infty(F;X)&:=&\displaystyle\int_{\substack{(I,J)\in\Inv_\infty(F)\\H(I,J)<X}}dIdJ,\\[.3in]
    M_\infty(V,F;X)&:=&\displaystyle\int_{\substack{(I,J)\in\Inv_\infty(F)\\H(I,J)<X}}\displaystyle\frac{\#(E^{I,J}(\R)/2E^{I,J}(\R))}{\# E^{I,J}(\R)[2]}dIdJ.
  \end{array}
\end{equation}

We now have the following theorem, which follows from Proposition \ref{propunifec} just as Theorem \ref{thsquarefreebq} followed from Theorem \ref{thunifbqelem}:
\begin{theorem}\label{thnumelip}
Let $F$ be a large family of elliptic curves and let $N(F;X)$
denote the number of elliptic curves $E\in F$ such that $H'(E)<X$. Then
\begin{equation}\label{eqnumelip}
N(F;X)=M_\infty(F;X)\prod_pM_p(F)+o(X^{5/6}).
\end{equation}
\end{theorem}

\subsection{Proofs of the main theorems (Theorems \ref{mainellip}, \ref{ellipcong}, and \ref{ellipall})}

Let us say that an element $f\in V_\Z$ is {\it bad at $p$} if either
$f$ is not $\Q_p$-soluble or $m_p(f)\neq 1$.
To deduce Theorem~\ref{ellipall}
from Theorem \ref{thsquarefreebq}, we need the following result:

\begin{proposition}\label{unifellip}
If an integral binary quartic form $f$ is bad at a prime $p>2$, then 
$p^2\mid\Delta(f)$.
\end{proposition}

\begin{proof}
  If $m_p(f)\neq 1$, then there exists
  $\gamma\in\PGL_2(\Q_p)\backslash\PGL_2(\Z_p)$ such that $\gamma\cdot
  f\in V_{\Z_p}$.  By replacing $f$ with a $\PGL_2(\Z_p)$-translate if
  necessary, we may assume that $\gamma=\bigl(\begin{smallmatrix}p^a &
    {}\\{}&p^{b}\end{smallmatrix}\bigr)$, with $a>b=0$. It then
  follows that the $x^4$-coefficient of $f$ is divisible by $p^2$ and
  the $x^3y$-coefficient of $f$ is divisible by~$p$, implying that
  $p^2\mid\Delta(f)$.

  We now show that if $f\in V_\Z$ is not $\Q_p$-soluble, then $f$ has
  splitting type $(1^21^2)$, $(2^2)$, or $(1^4)$ at~$p$, implying that
  $p^2\mid\Delta(f)$. First, if the discriminant of
  $f\in V_{\Z_p}$ is prime to $p$, then $f$ is $\Q_p$-soluble (see
  \cite[Chapter 3.6]{Cbook}). Also, if the splitting type of $f$ at
  $p$ is $(1^211)$ or $(1^31)$, then the reduction of $f$ modulo $p$
  has a simple root in $\P^1(\F_p)$, which then lifts to a root in
  $\P^1(\Q_p)$ by Hensel's Lemma. Thus $f$ is $\Q_p$-soluble.

  It remains to prove that if the splitting type of $f$ at $p$ is
  $(1^22)$, then $f$ is $\Q_p$-soluble. If $f\in V_{\Z_p}$ has
  splitting type $(1^22)$, then the reduction of $f$ modulo $p$ can be
  assumed to be of the form $\bar{a}x^2(x^2-\bar{n}y^2)$, where
  $\bar{n}$ is a nonresidue modulo $p$.  Hence we may assume that
  $f=a(x^2-kpy^2)(x^2-ny^2)$, where $a,n,k\in\Z_p$, the element
  $n\in\Z_p$ is a nonresidue when reduced modulo $p$, and $p\nmid
  a$. If $a$ is a square in $\Q_p$, then $f(1,0)$ is a square in
  $\Q_p$ and we are done. So we may assume that $a$ is a
  nonsquare. Now if $p\nmid x_0$, then $x_0^2-kp$ is a square in
  $\Q_p$; so it suffices to prove the existence of $\bar{x}_0\in
  \F_p^\times$ such that $\bar{x}_0^2-\bar{n}$ is a quadratic
  nonresidue modulo $p$.  Consider the first quadratic residue
  $\bar{x}_0^2=(c+1)\bar{n}$ appearing in the sequence
  $\bar{n},\,2\bar{n},\ldots,(p-1)\bar{n}$. Then
  $\bar{x}_0^2-\bar{n}=(c+1)\bar{n}-\bar{n}=c\bar{n}$ is a nonresidue, as
  was desired.
\end{proof}

Analogously to the sets $S_p(F)$, we define $S_\infty(F)$ to be the
set of all $\R$-soluble binary quartic forms in $V_\R$ whose
invariants belong to $\Inv_\infty(F)$.  Since
$\#(E^{I,J}(\R)/2E^{I,J}(\R))/\# E^{I,J}(\R)[2]$ is always equal to
$1/2$, the computation of the volume of the sets $\RR_X(L^{(i)})$ in
Section 2.4 and the definition of $M_\infty(F;X)$ implies that
$$
N(V_\Z\cap S_\infty(F);X)=\frac1{27} \Vol(\PGL_2(\Z)\backslash\PGL_2(\R)) M_\infty(V,F;X)+O(X^{3/4+\epsilon}).
$$

We now prove the following theorem, from which Theorem \ref{ellipall} will
be seen to follow.
\begin{theorem}\label{main}
Let $F$ be a large family of elliptic curves. Then we have
\begin{equation}\label{eqthsec5}
  \displaystyle\lim_{X\to\infty}\frac{\displaystyle\sum_{\substack{E\in F\\H'(E)<X}}(\#S_2(E)-1)}{\displaystyle\sum_{\substack{E\in F\\H'(E)<X}}1}=
\Vol(\PGL_2(\Z)\backslash\PGL_2(\R))\frac{M_\infty(V,F;X)}{M_\infty(F;X)} \displaystyle\prod_p\left[\Vol(\PGL_2(\Z_p))\frac{M_p(V,F)}{M_p(F)}\right].
\end{equation}
\end{theorem}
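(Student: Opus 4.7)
My plan is to follow the template of \S\ref{secsieve} (the proof of Theorem \ref{mcc}): reinterpret the numerator of (\ref{eqthsec5}) as a count of $\GL_2(\Z)$-equivalence classes of integral binary quartic forms lying in $S^F$, apply Theorem \ref{refbq} together with Theorem \ref{cong2} to finite truncations of the defining local conditions, and then pass to the full infinite intersection using the uniformity estimate Proposition \ref{unifellip}. First, by Theorem \ref{2spar} and the fact that the center of $\GL_2$ acts trivially on $V_\Z$ under the twisted action (\ref{acbin}) (so $\GL_2(\Z)$- and $\PGL_2(\Z)$-orbits on $V_\Z$ coincide), the non-identity elements of $S_2(E)$ for $E\in F$ with $H'(E)<X$ are in bijection with $\GL_2(\Z)$-equivalence classes in $S^F$ of height at most $2^{12}X$, since the invariants $(I,J)$ are replaced by $(2^4I,\,2^6J)$ and heights scale by $2^{12}$. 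Consequently the numerator of (\ref{eqthsec5}) equals $N(S^F;\,2^{12}X)$.

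The set $S^F\subset V_\Z$ is defined by infinitely many local conditions: at each prime $p$, a condition selecting one $\PGL_2(\Z_p)$-orbit in each $\PGL_2(\Q_p)$-orbit of $\Q_p$-soluble forms whose invariants $(I',J')/(2^4,2^6)$ lie in $F^\inv_p$. For any $Y$, the truncation requiring these conditions only at primes $p\leq Y$ is cut out by finitely many congruence conditions, so Theorem \ref{cong2}---applied on each real component $V_\Z^{(0)},V_\Z^{(1)},V_\Z^{(2)}$ and summed---gives its count of height $<2^{12}X$ as $C\cdot X^{5/6}\prod_{p\leq Y}\mu_p(S^F)+o(X^{5/6})$ for an explicit constant $C$ depending only on Archimedean data, namely the leading constants of Theorem \ref{refbq} restricted to components which support locally soluble forms at $\infty$, multiplied by $2^{10}=2^{12\cdot 5/6}$. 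The discrepancy between the truncation and $S^F$ consists of forms that are bad at some prime $p>Y$ in the sense of Proposition \ref{unifellip}, whose contribution is bounded by $\sum_{p>Y}O(X^{5/6}/p^{5/3})$ which is $o(X^{5/6})$ uniformly in $X$. Letting $Y\to\infty$ exactly as in \S\ref{secsieve} yields
\[
N(S^F;\,2^{12}X) \;=\; C\cdot X^{5/6}\prod_p\mu_p(S^F) + o(X^{5/6}).
\]

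By Proposition \ref{denel}, $\prod_p\mu_p(S^F)=(27/2^{10})\prod_p M_p(V,F)$, since $\prod_p|2^{10}/3^3|_p=3^3/2^{10}$. A Davenport lattice point count applied to $F^\inv\subset\Z\times\Z$ in the spirit of Proposition \ref{IJcount}, combined with an analogous sieve (largeness of $F$ ensures that the defining local conditions at all but finitely many primes reduce to the natural minimality-and-eligibility ones, whose uniform tail follows from the same density estimates), gives the denominator of (\ref{eqthsec5}) as $C'\cdot X^{5/6}\prod_p M_p(U_1,F)+o(X^{5/6})$. Forming the ratio produces
\[
\lim_{X\to\infty}\frac{\sum_{E\in F,\,H'(E)<X}(\#S_2(E)-1)}{\sum_{(I,J)\in F^\inv,\,H'(I,J)<X}1} \;=\; \frac{C}{C'}\cdot\frac{27}{2^{10}}\cdot\prod_p\frac{M_p(V,F)}{M_p(U_1,F)}.
\]
Explicit comparison of the Archimedean volume integrals in Theorem \ref{refbq} (summed over the locally-soluble real components) against those in Proposition \ref{IJcount}---equivalently, the Jacobian of Proposition \ref{bqjac}---identifies the prefactor $(C/C')\cdot(27/2^{10})$ with $\zeta(2)$, which is the same $\zeta(2)$ appearing in Theorem \ref{bqaverage}. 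This proves (\ref{eqthsec5}).

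The principal technical obstacle is the uniformity estimate Proposition \ref{unifellip}, which reduces to Proposition \ref{unifbq}; the latter is established in \S\ref{keyestimatesection} only after developing the enlarged nonreductive $12$-dimensional representation on pairs of ternary quadratic forms (\S4.2) and counting its integral orbits of bounded invariants and index. Without this tail bound one cannot legitimately exchange the infinite product $\prod_p\mu_p(S^F)$ with the limit count, and the sieve collapses; everything else in the argument is bookkeeping on Jacobians, explicit $p$-adic mass computations, and the real volume comparisons already carried out in Sections 2 and 3.
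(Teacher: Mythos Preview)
Your proposal is correct and follows essentially the same approach as the paper: you identify the numerator with $N(S^F;2^{12}X)$ via Theorem~\ref{2spar}, sieve down from finite truncations using Theorem~\ref{cong2} together with the uniformity estimate Proposition~\ref{unifellip} (reducing to Proposition~\ref{unifbq}), convert $\prod_p\mu_p(S^F)$ to $\prod_p M_p(V,F)$ via Proposition~\ref{denel}, handle the denominator by a lattice-point count with the analogous sieve (Lemma~\ref{ce}, resting on Proposition~\ref{unifmc}), and match the Archimedean constants. The only mild imprecision is that the discrepancy between the $Y$-truncation and $S^F$ includes, in addition to forms ``bad at $p$'' in the sense of Proposition~\ref{unifellip}, forms whose invariants $(I/2^4,J/2^6)$ fail to lie in $F_p^{\inv}$ for some $p>Y$; but since $F$ is large this forces non-maximality of the associated cubic ring at $p$, hence non--strong-maximality of the quartic, and Proposition~\ref{unifbq} again controls the tail---exactly as in the paper's appeal to ``the proof is identical to that of Theorem~\ref{mcc}.''
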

\begin{proof}
Note that by Theorem~\ref{2spar},  
the numerator of the left hand side of (\ref{eqthsec5}) 
is equal to the number
of locally soluble $\PGL_2(\Z)$-orbits on $S(F^{\inv})$ having
height bounded by $2^{12}X$ and no rational linear factor, 
where each orbit $\PGL_2(\Z)\cdot f$ is
counted with weight $1/m(f)$.
Thus, by Theorem \ref{thsquarefreebq} and Propositions~\ref{propmassform}, \ref{denel},
and~\ref{unifellip}, we have 
\begin{equation}\label{impeqsec5}
\begin{array}{rcl}
\!\!\!\!\!
\displaystyle\sum_{\substack{E\in F\\H'(E)<X}}\!\!\!(\#S_2(E)-1)
&\!\!\!\!=\!\!\!\!&\displaystyle{N(V_\Z\cap S_\infty(X);2^{12}X)\prod_p \int_{S_p(F)}\frac{1}{m_p(f)}df}
+o(X^{5/6})\\
&\!\!\!\!=\!\!\!\!&\displaystyle{\frac{2^{10}}{27}\Vol(\PGL_2(\Z)\backslash\PGL_2(\R)) M_\infty(V,F;X)\prod_p\left|\frac{2^{10}}{27}\right|_p\!\!\Vol(\PGL_2(\Z_p))
M_p(V,F)}\!+\!o(X^{5/6})\\[.23in]
&\!\!\!\!=\!\!\!\!&\displaystyle{\Vol(\PGL_2(\Z)\backslash\PGL_2(\R))M_\infty(V,F;X)\prod_p\Vol(\PGL_2(\Z_p))
M_p(V,F)}+o(X^{5/6}).
\end{array}
\end{equation}
Meanwhile, Theorem \ref{thnumelip} implies that we have
\begin{equation}\label{eqhfe}
  \displaystyle\sum_{\substack{E\in F\\H'(E)<X}}1=M_\infty(F;X)\prod_pM_p(F)+o(X^{5/6}).
\end{equation}
Taking the ratio of (\ref{impeqsec5}) and (\ref{eqhfe}) now yields
Theorem~\ref{main}.
\end{proof}

To evaluate the right hand side of (\ref{eqthsec5}), we
require the following fact (see \cite[Lemma~3.1]{BK}):
\begin{lemma}\label{bk1}
Let $E$ be an elliptic curve over $\Q_p$. Then
$$\#(E(\Q_p)/2E(\Q_p))=
\left\{\begin{array}{cl}
\#E(\Q_p)[2] & {\mbox{\em if }} p\neq 2;\\[.1in]
2\cdot\#E(\Q_p)[2]& {\mbox{\em if }} p= 2.
\end{array}\right.$$
\end{lemma}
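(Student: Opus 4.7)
The plan is to exploit the local structure of $E(\Q_p)$ as a compact $p$-adic Lie group of dimension one. Concretely, the formal group of $E$ gives rise to a filtration $E(\Q_p) \supset \hat E(p\Z_p) \supset \hat E(p^2\Z_p) \supset \cdots$, and for all $n$ sufficiently large the formal logarithm induces an isomorphism $H_n := \hat E(p^n\Z_p) \cong p^n\Z_p \cong \Z_p$; moreover $E(\Q_p)/H_n$ is finite, since its successive quotients are the component group, the finite group $\tilde E_{\rm ns}(\F_p)$, and $\hat E(p\Z_p)/\hat E(p^n\Z_p)$. First I would fix such a finite-index subgroup $H \subset E(\Q_p)$ with $H \cong \Z_p$, set $F := E(\Q_p)/H$, and then reduce the computation of $\#(E(\Q_p)/2E(\Q_p))$ to a purely algebraic manipulation.

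Next, applying the snake lemma to the short exact sequence $0 \to H \to E(\Q_p) \to F \to 0$ with the vertical multiplication-by-$2$ maps yields the six-term exact sequence
$$0 \to E[2](\Q_p) \to F[2] \to H/2H \to E(\Q_p)/2E(\Q_p) \to F/2F \to 0,$$
where $H[2] = 0$ since $H \cong \Z_p$ is torsion-free. Now $H/2H$ equals $\Z_p/2\Z_p$, which is trivial for $p \neq 2$ and is $\Z/2\Z$ for $p = 2$. Combining this with the standard fact $\#F[2] = \#F/2F$ for any finite abelian group $F$, the alternating product of orders in the above exact sequence collapses to
$$\#(E(\Q_p)/2E(\Q_p)) = \#E[2](\Q_p) \cdot \#(H/2H),$$
which immediately yields the two cases of the lemma.

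The only substantial obstacle is the structural input in the first step --- namely, that $E(\Q_p)$ contains a finite-index subgroup isomorphic to $\Z_p$. The formal logarithm converges and is an isomorphism on $\hat E(p^n\Z_p)$ precisely when $n > 1/(p-1)$, so it is immediate for $p$ odd (take $n = 1$), but for $p = 2$ one must pass to $n \geq 2$. This subtlety only shifts the choice of $H$; the remainder of the argument is insensitive to it, since the snake lemma input depends only on $H$ being torsion-free, isomorphic to $\Z_p$, and of finite index in $E(\Q_p)$.
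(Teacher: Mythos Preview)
Your argument is correct. The paper does not actually prove this lemma at all; it simply quotes it as a known fact with a citation to Brumer--Kramer~\cite[Lemma~3.1]{BK}. Your approach via the formal-group filtration (to produce a finite-index subgroup $H\cong\Z_p$) followed by the snake lemma for multiplication by~$2$ is exactly the standard proof, and is essentially what appears in the cited reference. One cosmetic remark: you call the resulting sequence ``six-term'' but display only five terms, having already suppressed the trivial $H[2]$; this is harmless but worth tidying.
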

Combining Lemma \ref{bk1} with (\ref{eqmpvf}) and
(\ref{mpufdef}), we obtain that
\begin{equation}
\frac{M_p(V,F)}{M_p(F)}=\displaystyle\frac{\displaystyle\displaystyle\int_{(I,J)\in \Inv_p(F)}\frac{\#(E^{I,J}(\Q_p)/2E^{I,J}(\Q_p))}{\#E^{I,J}(\Q_p)[2]}dIdJ}
{\displaystyle\displaystyle\int_{(I,J)\in \Inv_p(F)}dIdJ}=\left\{\begin{array}{ll}
1
&\quad\mbox{if $p\neq2$;}\\[.1in]
2
&\quad\mbox{if $p=2$.}
\end{array}\right.
\end{equation}
Since we also know that $M_\infty(V,F;X)/M_\infty(F;X)=1/2$, Theorem \ref{main} implies that
\begin{eqnarray*}
\frac{\displaystyle\sum_{\substack{E\in F\\H'(E)<X}}(\#S_2(E)-1)}{\displaystyle\sum_{\substack{E\in F\\H'(E)<X}}1}\;=\;\Vol(\PGL_2(\Z)\backslash\PGL_2(\R))\prod_p\Vol(\PGL_2(\Z_p))
\end{eqnarray*}
which is then equal to $2\zeta(2)\prod_p(1-p^{-2})=2$, the Tamagawa number of $\PGL_2(\Q)$. We have proven
Theorem~\ref{ellipall} (and thus also Theorems \ref{mainellip} and~\ref{ellipcong}).

\subsection*{Acknowledgments}

We are very grateful to John Cremona, Dick Gross, Tom
Fisher, Florian Herzig, Wei Ho, Jennifer Park, Bjorn Poonen, Jerry
Wang, and the anonymous referee for their many helpful comments on an
earlier version of this manuscript. The first author was partially supported
by NSF Grant~DMS-1001828.

\end{document}